\newtheorem{thm}{Theorem}[section]
\newtheorem*{quest*}{Question}
\newtheorem{theorem}[thm]{Theorem}
\newtheorem{cor}[thm]{Corollary}
\newtheorem{prop}[thm]{Proposition}
\newtheorem{lemma}[thm]{Lemma}
\newtheorem{remark}[thm]{Remark}
\newtheorem*{theorem*}{Theorem}
\newcommand{\bbf}{{\mathbb{F}}}
\newcommand{\bbq}{{\mathbb{Q}}}
\newcommand{\bbk}{{\mathbb{K}}}
\newcommand{\bbz}{{\mathbb{Z}}}
\newcommand{\cB}{\mathcal{B}}
\newcommand{\cN}{\mathcal{N}}
\newcommand{\fkA}{\mathfrak{A}}
\newcommand{\Gal}{\operatorname{Gal}}
\newcommand{\Fl}{\bbf_{\ell}}
\newcommand{\sep}{\operatorname{sep}}
\renewcommand{\Im}{\operatorname{Im}}
\newcommand{\Kst}{\bbk\left(s,t\right)}
\newcommand{\SL}[1]{\operatorname{SL}_{2}\left(\bbz/{#1}\bbz\right)}
\newcommand{\GL}[1]{\operatorname{GL}_{2}\left(\bbz/{#1}\bbz\right)}
\newcommand{\peu}{\left(\bbz/p^{e}\bbz\right)^{\times}}
\newcommand{\pu}{\left(\bbz/p\bbz\right)^{\times}}
\renewcommand{\S}[2]{\mathfrak{S}_{{#1}}\left({#2}\right)}
\newcommand{\Lint}{L_{p^{e} N}~\cap~L_{N \ell}}
\newcommand{\abs}[1]{{\left|{#1}\right|}}
\newcommand{\Disc}[1]{\operatorname{Disc}\left({#1}\right)}
\newcommand{\ab}{\operatorname{ab}}
\renewcommand{\ss}{\operatorname{ss}}
\newcommand{\cyc}{\operatorname{cyc}_{N}}
\newcommand{\Fp}{\bbf_{p}}
\newcommand{\alg}{{\operatorname{alg}}}
\newcommand{\cNintersec}{\cN_{p^{e},\cB_{N \ell},\ell}}
\title[]{The automorphism group of torsion points of an elliptic curve over a field of characteristic $\ge 5$}
\author{Bo-Hae Im}
\address{Department of Mathematical Sciences, KAIST, 291 Daehak-ro, Yuseong-gu, Daejeon, 34141, South Korea}
\email{bhim@kaist.ac.kr}
\author{Hansol Kim}
\address{Institute of Mathematics, Academia Sinica, 6F, Astronomy-Mathematics Building,
	No. 1, Sec. 4, Roosevelt Road, Taipei 10617, Taiwan}
\email{jawlang@as.edu.tw}
\thanks{
Bo-Hae Im was supported by Basic Science Research Program through the National Research Foundation of Korea(NRF) grant funded by the Korea government(MSIT)(NRF-2023R1A2C1002385). Hansol Kim was supported by NSTC grant funded by Taiwan government (MST) (No.~113-2811-M-001-068).}
\date{\today}
\subjclass[2010]{Primary 11, 11, Secondary 11}
\keywords{elliptic curve, torsion subgroup}
\begin{document}
\maketitle

\begin{abstract}
	For a field $\bbk $ of characteristic $p\ge5$ containing $\Fp^{\alg}$ and the elliptic curve $E_{s,t}: y^{2} = x^{3} + sx + t$ defined over the function field $\Kst$ of two variables $s$ and $t$, we prove that for a non-negative positive integer $e$ and a positive integer $N$ which is not divisible by $p$, the automorphism group of the normal extension $\Kst\left(E_{s,t}\left[p^{e} N\right]\right)$ over $\Kst$ is isomorphic to $\left(\bbz/p^{e}\bbz\right)^{\times} \times \SL{N}$.
\end{abstract}

\section{Introduction}\label{sec:intro}

Elliptic curves are a central research topic in number theory and arithmetic geometry, playing a vital role in various fields, including algebraic geometry. Many Diophantine equations can be connected to elliptic curves, such as those arising in Fermat's Last Theorem and the congruence number problem. Moreover, when the Mordell-Weil theorem (\cite[Ch.VIII, Theorem~6.7]{Silverman}) asserts that the group of rational points of an elliptic curve is a finitely generated abelian group. Thus, finding only a finite number of generators yields a complete characterization of all rational points on the elliptic curve.

There are various approaches to compute the rank and the torsion subgroup of an elliptic curve and one of them is to apply Galois representation. For an elliptic curve $\mathscr{E}$ over a field $F$ and a positive integer $N$ which is not divisible by the characteristic of $F$, it is known that $\mathscr{E}\left[N\right] \cong \left(\bbz/N\bbz\right)^{2}$ and the field of definition of $N$-torsion points $F\left(\mathscr{E}\left[N\right]\right)$ over $F$ is Galois. Given a basis of $\cB_{N} = \left\{P_{N}, Q_{N}\right\}$ of $\mathscr{E}\left[N\right]$, we have an injective group homomorphism $r_{\cB_{N}}: \Gal\left(F\left(\mathscr{E}\left[N\right]\right)/F\right) \hookrightarrow \GL{N}$ defined by $$
	r_{\cB_{N}}\left(\sigma\right)\begin{pmatrix}P_{N}\\Q_{N}\end{pmatrix} = \begin{pmatrix}P_{N}^{\sigma}\\Q_{N}^{\sigma}\end{pmatrix}.
$$ For positive divisors $m$ and $n$ of $N$ such that $m \mid n$, we have that $\mathscr{E}\left(F\right)\left[N\right] \supseteq \bbz/m\bbz \oplus \bbz/n\bbz$ if and only if $$
	\operatorname{Im} \left(r_{\cB_{N}}\right)
	\subseteq
	\left\{
		\begin{pmatrix}a&b\\c&d\end{pmatrix} \in \GL{N} :
		\begin{aligned}\left(a,b\right) & \equiv \left(1,0\right) \pmod{m}\\\left(c,d\right) & \equiv \left(0,1\right) \pmod{n}\\
		\end{aligned}
	\right\}.
$$
\text{(See \cite[Lemma~3.2]{IK22}).}

The mod-$N$ representation $r_{\cB_{N}}$ is widely studied for the cases when $F = \bbq$. For example, Serre's open image theorem (\cite{Serre72}) states that, for each non-CM elliptic curve over $\bbq$, the mod-$\ell$ Galois representations are surjective for almost all primes $\ell$. Moreover, it is conjectured that the upper bound of $\ell$ such that mod-$\ell$ representation is reducible  is uniform, i.e., such an upper bound does not depend on non-CM elliptic curves.

Let $\bbk$ be a field of characteristic $p \ge 5$ containing a fixed algebraic closure $\Fp^{\alg}$ of $\Fp$, $e$ a non-negative integer, and $N$ a positive integer which is not divisible by $p$. The main goal of this paper is to characterize the automorphism group of the $p^{e} N$-torsion group $E\left[p^{e} N\right]$ of the elliptic curve $$
	E_{s,t}: y^{2} = x^{3} + sx + t
$$ defined over the function field $\Kst$ of two variables $s$ and $t$. More explicitly, this paper computes the automorphism group $\fkA\left(\Kst\left(E_{s,t}\left[p^{e} N\right]\right)/\Kst\right)$ where $\fkA\left(F'/F\right)$ denotes the automorphism group for a normal extension $F'/F$. It is known (\cite[Ch.III, Corollary~6.4]{Silverman} and \cite[Ch.V, Theorem~4.1~(b)]{Silverman}) that $$
	E_{s,t}\left[p^{e} N\right] \cong \bbz/p^{e}\bbz \oplus \left(\bbz/N\bbz\right)^{2}.
$$ Since the automorphisms of the normal extension $\Kst\left(E_{s,t}\left[p^{e} N\right]\right)/\Kst$ fix the order of the points in $E_{s,t}\left[p^{e} N\right]$, the automorphism group is a subgroup of $\peu \times \GL{N}$. More precisely, for a basis of $\cB_{N} = \left\{P_{N}, Q_{N}\right\}$ of $E_{s,t}\left[N\right]$ and a generator $R_{e}$ of $E_{s,t}\left[p^{e}\right]$, we have an injective group homomorphism $$
	\left(s_{e}, r_{\cB_{N}}\right): \fkA\left(\Kst\left(E_{s,t}\left[p^{e} N\right]\right)/\Kst\right) \hookrightarrow \peu \times \GL{N}
$$ defined by; for each $\sigma \in \fkA\left(\Kst\left(E_{s,t}\left[p^{e} N\right]/\Kst\right)\right)$, \begin{equation} \label{eqn:repn}
	s_{e}\left(\sigma\right)R_{e} = R_{e}^{\sigma}
	\text{ and }
	r_{\cB_{N}}\left(\sigma\right)\begin{pmatrix}P_{N}\\Q_{N}\end{pmatrix} = \begin{pmatrix}P_{N}^{\sigma}\\Q_{N}^{\sigma}\end{pmatrix}.
\end{equation} Since $\bbk$ contains a primitive $N$th root of unity, via the Weil paring, the image of $r_{\cB_{N}}$ is contained in $\SL{N}$ (\cite[Ch.III.8]{Silverman}). 
It is known that $\rho_{e, \cB_{N}}$ is an isomorphism if $e=0$ or $N=1$ as follows.

\begin{theorem}[{\cite[Theorem~1.1]{IK24}}]\label{thm:fact:pe}
	For a positive integer $e$, the group homomorphism
	$$s_{e}: \fkA\left(\Kst\left(E_{s,t}\left[p^{e}\right]\right)/\Kst\right) \to \peu$$ is an isomorphism.
\end{theorem}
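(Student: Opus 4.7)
The injection $s_e$ follows from the observation that $\bbl_e := \Kst(E_{s,t}[p^e])$ is generated over $\Kst$ by any generator $R_e$ of $E_{s,t}[p^e]$: the cyclic group $E_{s,t}[p^e] \cong \bbz/p^e\bbz$ because $E_{s,t}$ is ordinary, which in turn follows from the fact that the Hasse invariant $A_p(s,t)$---the coefficient of $x^{p-1}$ in $(x^3+sx+t)^{(p-1)/2}$---is a nonzero polynomial in $\bbk[s,t]$. It remains to show surjectivity of $s_e$, equivalently $[\bbl_e : \Kst] = \phi(p^e)$. The plan is to argue by induction on $e$.

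For the base case $e=1$, the $p$-th division polynomial $\psi_p(x) \in \Kst[x]$ factors in characteristic $p$ as $A_p(s,t) \cdot \phi_p(x) \cdot (\text{inseparable factor})$, where $\phi_p(x)$ has degree $(p-1)/2$ and its roots are the $x$-coordinates of the nontrivial \'etale $p$-torsion. One then shows $\phi_p$ is irreducible and that adjoining the corresponding $y$-coordinate gives a genuine quadratic extension, together yielding $[\bbl_1 : \Kst] = p-1$. Irreducibility of $\phi_p$ reduces, via the classical Igusa--Katz--Mazur theorem on the connectedness of the Igusa curve $\mathrm{Ig}(p)$ over the ordinary locus of the $j$-line, to the fact that $\Gal\bigl(\bbk(j)(E^*[p])/\bbk(j)\bigr) \cong \peu$ for any model $E^*$ of the generic elliptic curve over $\bbk(j)$; transferring this to $\Kst$ requires tracking the quadratic-twist structure relating $E_{s,t}$ to $E^*_{\Kst}$.

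For the inductive step, suppose $[\bbl_e : \Kst] = \phi(p^e)$. The preimages of $R_e$ under $[p]$ form a torsor under $E_{s,t}[p] \cong \bbz/p\bbz$, so $[\bbl_{e+1} : \bbl_e] \le p$. For the lower bound, work in the formal parameter $w = -x/y$ at the origin: by ordinarity $[p](w) = c(s,t)\, w^p + (\text{higher order terms})$ with $c(s,t) \in \Kst^{\times}$, so any lift $R_{e+1}$ satisfies an Artin--Schreier-type equation of degree exactly $p$ over $\bbl_e$, giving $[\bbl_{e+1} : \bbl_e] = p$. The main obstacle is the base case $e=1$---specifically the irreducibility of $\phi_p(x)$ over $\Kst$, which forces a careful comparison between $\Kst$ and $\bbk(j)$ that takes into account the twist parameter relating the Weierstrass family $E_{s,t}$ to the canonical family over $\bbk(j)$. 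A specialization argument at a generic supersingular $j$-value (where the Hasse invariant has a simple zero) combined with the known monodromy of the Igusa tower provides the cleanest route.
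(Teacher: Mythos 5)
Before evaluating the details, note that the paper itself does not prove this statement: it is imported verbatim from \cite[Theorem~1.1]{IK24}, and the present paper cites that earlier work for both the theorem and its supporting valuation machinery (e.g.\ \cite[Proposition~3.8, Corollary~3.7, Proposition~3.10]{IK24}). So your proposal is attempting a genuinely fresh proof.

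Your proposal has a real gap, and it is not a small one. You reduce surjectivity of $s_e$ to the claim that $[\bbl_e : \Kst] = \phi(p^e)$, and in the base case you conclude $[\bbl_1 : \Kst] = p-1$ after adjoining $x$- and $y$-coordinates. But $\fkA$ is the automorphism group of a \emph{normal, not Galois} extension, and its order equals the \emph{separable} degree $[\bbl_e : \Kst]_{\sep}$, not the total degree. The extension $\bbl_1/\Kst$ is in fact inseparable: as the present paper records from \cite{IK24}, $L_p/L_1$ has inseparable degree $p$ (and $[L_1(x(E_{s,t}[p])) : L_1] = p\tfrac{p-1}{2}$, not $\tfrac{p-1}{2}$). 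The inseparability comes precisely from the fact that the $x$-coordinates of $p$-torsion points are $p$-th roots of the zeros of the separable degree-$\tfrac{p-1}{2}$ polynomial $\theta_{s,t}$ displayed in the paper's~\eqref{theta}; they are not themselves roots of a separable polynomial of that degree, so your factorization $\psi_p = A_p\cdot\phi_p\cdot(\text{inseparable})$ with $\phi_p$ separable of degree $\tfrac{p-1}{2}$ in $x$ is not the right model. Any degree bookkeeping that ignores this factor of $p$ will not close.

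The inductive step also has a flaw in the way it is argued. You expand $[p]$ in the formal parameter $w=-x/y$ \emph{at the origin} and claim a lift $R_{e+1}$ of $R_e$ satisfies an Artin--Schreier equation of degree $p$. But when $R_e \neq O$, its preimages under $[p]$ do not lie in the formal neighbourhood of $O$, so the local expansion $[p](w)=c\,w^p+\cdots$ says nothing directly about those preimages. (It \emph{does} describe the Frobenius kernel $\widehat{E}[p]$, which is exactly the inseparable part you are missing.) The correct picture is a torsor under the \'etale quotient $E[p]^{\text{\'et}}\cong\bbz/p\bbz$, and proving that this $\bbz/p\bbz$-torsor is nontrivial over $\bbl_e$ requires an argument you haven't supplied. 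Separately, your route through the Igusa tower and Igusa--Katz--Mazur connectedness for irreducibility at $e=1$ is a legitimate alternative to the Eisenstein/valuation approach of \cite{IK24}, but as written it is a pointer rather than a proof; the ``twist parameter'' comparison between $E_{s,t}$ and the $j$-line model is itself nontrivial and is exactly where the separable-versus-total degree confusion would have to be resolved.
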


\begin{theorem}[{\cite[Ch.IV, 6.8~Corollary~1]{CF}}, {\cite[Section~1 (right after (1))]{IK24}}]\label{thm:fact:N}
	For a positive integer $N$ which is not divisible by $p$ and a basis $\cB_{N}$ of $E_{s,t}\left[N\right]$, the group homomorphism
	$$r_{\cB_{N}}: \Gal\left(\Kst\left(E_{s,t}\left[N\right]\right)/\Kst\right) \to \SL{N}$$ is an isomorphism.
\end{theorem}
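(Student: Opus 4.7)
\emph{Plan.} The map $r_{\cB_{N}}$ is visibly injective: any automorphism fixing both $P_{N}$ and $Q_{N}$ fixes every element of $E_{s,t}\left[N\right]$, and hence fixes $\Kst\left(E_{s,t}\left[N\right]\right)$ pointwise. The containment $\Im\left(r_{\cB_{N}}\right) \subseteq \SL{N}$ is the Weil pairing observation already recorded just before the statement: since $\bbk \supseteq \bbf_{p}^{\alg}$ contains $\mu_{N}$ and $e_{N}$ is a Galois-equivariant pairing valued in $\mu_{N}$, every Galois element has determinant $1$. The content of the theorem is therefore the surjectivity of $r_{\cB_{N}}$.

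For surjectivity, my plan is a two-stage reduction to the prime case, followed by a geometric monodromy input. Using $\SL{N} \cong \prod_{\ell^{n} \Vert N} \SL{\ell^{n}}$, a Goursat-type argument (built on the observation that for distinct primes $\ell \ne m$ the non-abelian composition factors $\operatorname{PSL}_{2}(\bbf_{\ell})$ and $\operatorname{PSL}_{2}(\bbf_{m})$ of $\SL{\ell^{n}}$ and $\SL{m^{k}}$ are non-isomorphic) reduces surjectivity for arbitrary $N$ to surjectivity for each prime power $N = \ell^{n}$ with $\ell \ne p$. Once surjectivity is known modulo $\ell$, it lifts to level $\ell^{n}$ by the standard fact that for $\ell \ge 5$ any closed subgroup of $\operatorname{SL}_{2}(\bbz_{\ell})$ reducing onto $\SL{\ell}$ is the full group; the small primes $\ell = 2, 3$ are handled by direct computation, for instance at $\ell = 2$ one observes that $\Kst\left(E_{s,t}\left[2\right]\right)$ is the splitting field of $x^{3} + sx + t$ over $\Kst$, whose Galois group is visibly the full $S_{3} \cong \SL{2}$.

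The base case $\Im\left(r_{\cB_{\ell}}\right) = \SL{\ell}$ is the heart of the matter. The approach I would pursue is geometric: the extension $\Kst\left(E_{s,t}\left[\ell\right]\right)/\Kst$ is étale over the discriminant complement $U = \operatorname{Spec}\bbk\left[s,t\right]\left[1/\Delta\right] \subset \bba^{2}_{\bbk}$ where $\Delta = -16\left(4s^{3} + 27t^{2}\right)$, so $\Im\left(r_{\cB_{\ell}}\right)$ is a quotient of the tame étale fundamental group $\pi_{1}^{\mathrm{tame}}(U)$ acting on $E_{s,t}\left[\ell\right]$. By Grothendieck's specialization theorem for prime-to-$p$ tame fundamental groups, this monodromy representation is identified with the topological monodromy of the universal Weierstrass family over $\bbc^{2} \setminus \left\{\Delta = 0\right\}$, which factors through the braid group $B_{3}$ and is classically known to surject onto $\operatorname{SL}_{2}(\bbz)$; reducing modulo $\ell$ yields $\SL{\ell}$ as required. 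The main obstacle is making this specialization of fundamental groups precise while keeping track of the full torsion module action (not merely its projective quotient), which is precisely the content of the cited computation \cite[Ch.~IV, 6.8 Corollary~1]{CF} that we would invoke directly rather than reprove.
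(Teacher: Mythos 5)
The paper provides no proof of this statement; it is recorded as a citation to Chai--Faltings \cite[Ch.~IV, 6.8~Corollary~1]{CF} and to \cite[Section~1]{IK24}, and is treated as known background used as input to the main results. So there is no proof in the paper to compare your attempt against step by step.

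That said, a few remarks on your sketch. The injectivity and Weil-pairing observations are correct and are exactly what the paper records in the surrounding text. Your reduction strategy for surjectivity has two gaps. In the Goursat step, you justify the absence of common quotients of $\SL{\ell^{m}}$ and $\SL{q^{n}}$ for distinct primes $\ell \ne q$ by appealing to the non-isomorphism of the non-abelian composition factors $\operatorname{PSL}_{2}(\Fl)$ versus $\operatorname{PSL}_{2}(\bbf_{q})$; but for $\ell, q \in \{2,3\}$ these groups are solvable ($\SL{2} \cong S_{3}$, and $\SL{3}$ modulo its center is $A_{4}$), have no non-abelian composition factors at all, and actually share the abelian composition factors $\bbz/2\bbz$ and $\bbz/3\bbz$. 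You need a further observation — for instance that the abelianization of $\SL{\ell^{n}}$ is a cyclic $\ell$-group, so abelianizations for distinct $\ell, q$ have no common quotient — to close this; this is essentially what the paper proves separately in its Lemma~\ref{lem:SL_comm}. In the lifting step, the Frattini-type lemma you cite (a closed subgroup of $\operatorname{SL}_{2}(\bbz_{\ell})$ surjecting onto $\SL{\ell}$ is full) genuinely fails for $\ell \in \{2,3\}$, and the ``direct computation'' you offer for $\ell = 2$ only establishes surjectivity onto $\SL{2} \cong S_{3}$ itself, not the lift to level $2^{n}$; one must verify surjectivity modulo a higher power (mod $4$ for $\ell = 2$, mod $9$ for $\ell = 3$) before lifting, and your text does not address this. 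Finally, the geometric monodromy input you propose for the base case is of course the correct key idea, but — as you yourself acknowledge in your last sentence — it is precisely the content of \cite[Ch.~IV, 6.8~Corollary~1]{CF}, which treats surjectivity onto $\SL{N}$ for arbitrary $N$ prime to $p$ all at once; deferring to it in the end makes your CRT/Goursat/lifting scaffolding an unnecessary detour with its own potential pitfalls.
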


We denote the group homomorphism $\left(s_{e}, r_{\cB_{N}}\right)$ which depends only on a positive integer~$e$ and a basis $\cB_{N}$ of $E_{s,t}\left[N\right]$ by $$
	\rho_{e, \cB_{N}} := \left(s_{e}, r_{\cB_{N}}\right) : \fkA\left(\Kst\left(E_{s,t}\left[p^{e} N\right]\right)/\Kst\right) \hookrightarrow \peu \times \SL{N}.
$$

Although Theorem~\ref{thm:fact:pe} and Theorem~\ref{thm:fact:N} assert that each component of $\rho_{e, \cB_N}$ is individually an isomorphism, it does not follow automatically that the combined map $\rho_{e, \cB_N}$ is surjective. Indeed, even if the projections of a subgroup to each factor are surjective, the subgroup itself may not be the full product. (See Remark~\ref{rmk-exp} for more explanation.) This subtlety is crucial and necessitates a separate and careful argument to prove the following main results.

Now, we are ready to state our main result which proves that $\rho_{e, \cB_{N}}$ is an isomorphism for given~$\cB_{N}$ and $e$, thereby synthesizing and unifying the separated conclusions of Theorem~\ref{thm:fact:pe} and Theorem~\ref{thm:fact:N}, and also determine the automorphism group of torsion points.

We denote the image of $\rho_{e, \cB_{N}}$ by $G_{e, \cB_{N}}$, i.e., $$
	G_{e, \cB_{N}}:=\rho_{e, \cB_{N}}\left(\fkA\left(\Kst\left(E_{s,t}\left[p^{e} N\right]\right)/\Kst\right)\right).
$$

\begin{theorem}\label{thm:main}
	Let $\bbk$ be a field of characteristic $p \ge 5$ containing $\Fp^{\alg}$, $N$ a positive integer which is not divisible by $p$, and $e$ a non-negative integer. For the elliptic curve $E_{s,t}: y^{2} = x^{3} + sx + t$ defined over $\Kst$ and a basis $\cB_{N}$ of $E_{s,t}\left[N\right]$, we have that $$
		G_{e, \cB_{N}} = \peu \times \SL{N}.
	$$
\end{theorem}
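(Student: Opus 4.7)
The plan is to realize $\rho_{e,\cB_{N}}$ as the restriction map for a compositum of two normal extensions and to reduce its surjectivity to a field-intersection problem. Set $L_{p^{e}}:=\Kst(E_{s,t}[p^{e}])$ and $L_{N}:=\Kst(E_{s,t}[N])$, and let $L_{p^{e}}^{\sep}$ denote the maximal separable subextension of $L_{p^{e}}/\Kst$, so that $\fkA(L_{p^{e}}/\Kst)=\Gal(L_{p^{e}}^{\sep}/\Kst)\cong\peu$ by Theorem~\ref{thm:fact:pe}. Because $E_{s,t}[p^{e}N]\cong E_{s,t}[p^{e}]\oplus E_{s,t}[N]$ as abelian groups, we have $\Kst(E_{s,t}[p^{e}N])=L_{p^{e}}\cdot L_{N}$, hence $\fkA(\Kst(E_{s,t}[p^{e}N])/\Kst)=\Gal(L_{p^{e}}^{\sep}\cdot L_{N}/\Kst)$. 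Combined with Theorem~\ref{thm:fact:N}, the standard formula for the Galois group of a compositum of normal extensions identifies $G_{e,\cB_{N}}$ with the fiber product
\[
\peu\,\times_{\Gal(L_{p^{e}}^{\sep}\cap L_{N}/\Kst)}\,\SL{N},
\]
so that the theorem becomes equivalent to the disjointness statement $L_{p^{e}}^{\sep}\cap L_{N}=\Kst$.

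By Goursat's lemma, the intersection $M:=L_{p^{e}}^{\sep}\cap L_{N}$ is a Galois extension of $\Kst$ whose group is simultaneously a quotient of $\peu$ (hence abelian) and of $\SL{N}$. Therefore $[\SL{N},\SL{N}]$ fixes $M$, and $[M:\Kst]$ divides the order of $\SL{N}^{\ab}$. Decomposing $\SL{N}\cong\prod_{\ell\mid N}\SL{\ell^{v_{\ell}(N)}}$ via CRT and invoking the classical facts that $\SL{\ell^{a}}$ is perfect for $\ell\ge 5$, while $\SL{2}^{\ab}\cong\bbz/2\bbz$ and $\SL{3}^{\ab}\cong\bbz/3\bbz$, one sees that $M/\Kst$ has bounded degree, and that the relevant abelian subfield of $L_{N}$ is contained in the compositum of the quadratic field $\Kst(\sqrt{\Delta})$ (when $2\mid N$), where $\Delta=-16(4s^{3}+27t^{2})$, with the cyclic cubic subfield of $\Kst(E_{s,t}[3])$ (when $3\mid N$). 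In particular, when $\gcd(N,6)=1$, $\SL{N}$ is perfect and the theorem follows immediately.

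For the remaining cases I would show that neither $\Kst(\sqrt{\Delta})$ nor the cyclic cubic subfield of $\Kst(E_{s,t}[3])$ is contained in $L_{p^{e}}^{\sep}$. The plan is to combine the explicit description of generators of $L_{p^{e}}/\Kst$ developed in~\cite{IK24} with a specialization argument: choose $(s_{0},t_{0})\in\bbk^{2}$ so that $E_{s_{0},t_{0}}$ is ordinary and so that $\sqrt{\Delta(E_{s_{0},t_{0}})}$ (or the analogous cubic generator) produces a specific quadratic (resp. cubic) extension of $\bbk$ which, using~\cite{IK24}, can be verified not to lie in the specialized $p^{e}$-torsion field of $E_{s_{0},t_{0}}$; surjectivity of specialization of Galois groups then propagates the disjointness to the generic setting. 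I expect this final separation step to be the main obstacle, since every earlier step is formal group theory (Goursat, CRT, and perfectness of $\SL{\ell^{a}}$ for $\ell\ge 5$); the substantive task is to compare the ``geometric'' abelian subfields of $L_{N}$ with the cyclic $\peu$-structure of $L_{p^{e}}^{\sep}$, most likely by a careful analysis of either ramification in $\Kst$ or the explicit form of both extensions.
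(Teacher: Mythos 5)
Your reduction of the theorem to the single intersection statement $L_{p^{e}}^{\sep}\cap L_{N}=\Kst$ via the fiber-product description of $\Gal$ of a compositum is correct and is a genuinely different packaging from the paper, which instead proceeds by induction on $N$ one prime at a time (Proposition~\ref{prop:induction}), at each step reducing to the equality $\cNintersec=\S{N\ell}{N}$; the inductive formulation has the advantage that at each step the ``new'' group is only $\SL{\ell}$ or $(\bbz/\ell\bbz)^3$, never the full $\SL{\ell^{a}}$. Your Goursat step is fine in outline, but it quietly overstates what the abelianization bound gives you when $4\mid N$ or $9\mid N$: the groups $\SL{2^{a}}^{\ab}$ and $\SL{3^{b}}^{\ab}$ for $a,b\ge 2$ are strictly larger than $\bbz/2\bbz$ and $\bbz/3\bbz$ (for instance $\SL{2^{a}}^{\ab}$ has order at least $4$ for $a\ge 2$, as one sees from Lemma~\ref{lem:ab} together with the fact that the adjoint action on the kernel of $\SL{4}\twoheadrightarrow\SL{2}$ is reducible in characteristic $2$). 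So the fixed field of $[\SL{N},\SL{N}]$ inside $L_{N}$ is in general bigger than the compositum of $\Kst(\sqrt{\Delta})$ with the cyclic cubic subfield of $L_{3}$, and you would have to control all of these abelian subextensions, or use the cyclicity of $\peu$ to cut $M$ down further — a step you have not supplied. The paper sidesteps this entirely because its induction never sees $\SL{\ell^{a}}^{\ab}$; it only has to analyze $\S{N\ell}{N}$, which is $\SL{\ell}$ or $(\bbz/\ell\bbz)^{3}$ (Lemma~\ref{lem:SLNN}).

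The more serious problem is your proposed specialization argument for the $\ell\in\{2,3\}$ case. The base field here contains $\Fp^{\alg}$, and in the main case $\bbk=\Fp^{\alg}$ the field $\bbk$ is algebraically closed; specializing $(s,t)\mapsto(s_{0},t_{0})\in\bbk^{2}$ then produces a curve all of whose torsion is already $\bbk$-rational, so the specialized Galois groups are trivial and carry no information. There is no place to ``verify that a quadratic or cubic extension does not lie in the specialized $p^{e}$-torsion field'' because no such nontrivial extensions exist over $\bbk$. The disjointness you need is an intrinsically generic phenomenon, and the paper's valuation-theoretic argument (Section~\ref{subsec:2or3}, Propositions~\ref{prop:2or3}, \ref{prop:2}, \ref{prop:3}) is designed precisely to exploit the transcendence of $s,t$: it constructs a discrete valuation $v$ on $\Kst$ for which $v(4s^{3}+27t^{2})\equiv 0 \pmod \ell$ while a suitable element $\mu$ with $\sqrt[\ell]{\mu}\in L_{p}$ has $v(\mu)\not\equiv 0 \pmod \ell$, and then uses Kummer theory plus the absence of index-$\ell$ normal subgroups in $\SL{N}$ (Lemma~\ref{lem:SL_comm}) to rule out the coincidence of the two cyclic extensions of $L_{N}$. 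If you want to complete your route, you should replace the specialization idea by a direct valuation computation on $\Kst$ of this kind; as stated, the final — and, by your own admission, central — step of your argument does not go through.
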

\begin{remark}\label{rmk-exp}
	To emphasize the subtlety, consider the subgroup $G \subseteq \left(\bbz/5\bbz\right)^{\times} \times \SL{2}$ generated by $\left(\overline{2},\begin{pmatrix}\overline{0}&\overline{1}\\\overline{1}&\overline{0}\end{pmatrix}\right)$ and $\left(\overline{1},\begin{pmatrix}\overline{1}&\overline{1}\\\overline{0}&\overline{1}\end{pmatrix}\right)$. Both projections of $G$  to $\left(\bbz/5\bbz\right)^{\times}$ and to $\SL{2}$ are surjective. However,  $G $ is strictly smaller than the full product $\left(\bbz/5\bbz\right)^{\times} \times \SL{2}$, since $G$ does not contain the element $\left(\overline{1}, \begin{pmatrix}\overline{0}&\overline{1}\\\overline{1}&\overline{0}\end{pmatrix}\right)$. This shows us that the surjectivity of individual projections  given in Theorem~\ref{thm:fact:pe} and Theorem~\ref{thm:fact:N} does not guarantee that the combined map $\rho_{e, \cB_N}$ is surjective.
\end{remark}

Theorem~\ref{thm:main} enables the following computation of the automorphism group of the $p^{e} N$-torsion subgroup of $E_{s,t}/K\left(s,t\right)$ in a general setting where the ground field $K$ is not assumed to contain $\Fp^{\alg}$, by exploring the relation between the Weil pairing and the determinant of the associated Galois representation.

\begin{cor}\label{cor:gen}
	Let $K$ be an arbitrary field of characteristic $p\geq 5$, $N$ a positive integer which is not divisible by $p$, and $e$ a non-negative integer. For the elliptic curve $E_{s,t}: y^{2} = x^{3} + sx + t$ defined over $K\left(s,t\right)$ and any basis $\cB_{N}$ of $E_{s,t}\left[N\right]$, we have that $$
		G_{e, \cB_{N}} = \left\{\left(u,A\right) \in \peu \times \GL{N}: \det A \in \Im\left(\cyc\right)\right\},
	$$ where $\cyc$ is the mod-$N$ cyclotomic character of the absolute Galois group of $K$.
\end{cor}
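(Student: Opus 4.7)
My plan is to combine a Weil pairing computation—which forces the determinant of the mod-$N$ component of $\rho_{e, \cB_N}$ to agree with the cyclotomic character—with Theorem~\ref{thm:main} applied to the enlarged base field $\bbk := K \cdot \Fp^{\alg}$ (formed inside a fixed algebraic closure of $K$), together with a Galois-lifting step that realizes each element of $\Im(\cyc)$.

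First, I would prove the inclusion $G_{e, \cB_N} \subseteq \{(u, A) : \det A \in \Im(\cyc)\}$. Setting $L := \Kst(E_{s,t}[p^{e} N])$ and $\zeta := e_N(P_N, Q_N)$, the Galois equivariance and bilinearity of the Weil pairing yield
$$
  \sigma(\zeta) \;=\; e_N(\sigma P_N,\, \sigma Q_N) \;=\; \zeta^{\,\det r_{\cB_N}(\sigma)}
$$
for every $\sigma \in \fkA(L/\Kst)$. Because $\{P_N, Q_N\}$ is a basis of $E_{s,t}[N]$, $\zeta$ generates $\mu_N$ inside $K^{\sep}$, and the restriction of $\sigma$ to $\mu_N$ is governed by the cyclotomic character, forcing $\det r_{\cB_N}(\sigma) \in \Im(\cyc)$.

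For the reverse inclusion, let $M := \bbk(s,t)(E_{s,t}[p^{e} N])$, which is Galois over $\Kst$ as the compositum of the two Galois extensions $L$ and $\bbk(s,t)$. Applying Theorem~\ref{thm:main} over $\bbk$ gives $\rho_{e, \cB_N}(\fkA(M/\bbk(s,t))) = \peu \times \SL{N}$, and elements of $\fkA(M/\bbk(s,t))$, viewed inside $\fkA(M/\Kst)$ and restricted to $L$, exhibit $\peu \times \SL{N} \subseteq G_{e, \cB_N}$. Given $(u, A)$ with $\det A \in \Im(\cyc) = \Gal(K(\mu_N)/K)$, I would pick $\tau \in \fkA(\bbk(s,t)/\Kst) \cong \Gal(\bbk/K)$ acting on $\mu_N$ as the $(\det A)$-th power, lift $\tau$ through the surjection $\fkA(M/\Kst) \twoheadrightarrow \fkA(\bbk(s,t)/\Kst)$ to some $\tilde\tau \in \fkA(M/\Kst)$, and set $(u_0, A_0) := \rho_{e, \cB_N}(\tilde\tau|_L)$. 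The Weil pairing identity above forces $\det A_0 = \det A$, whence $(u u_0^{-1}, A A_0^{-1}) \in \peu \times \SL{N} \subseteq G_{e, \cB_N}$; multiplying by $(u_0, A_0)$ places $(u, A)$ in $G_{e, \cB_N}$.

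The main subtlety is the field-theoretic bookkeeping: ensuring that $M/\Kst$ is genuinely Galois so that the restriction $\fkA(M/\Kst) \twoheadrightarrow \fkA(\bbk(s,t)/\Kst)$ is surjective, and checking that the cyclotomic character extracted from the Weil pairing action of $\sigma \in \fkA(L/\Kst)$ really coincides (through restriction to the algebraic constants inside $L$) with $\cyc$ on the absolute Galois group of $K$. No genuinely new global idea is required beyond Theorem~\ref{thm:main} and the Weil pairing; the content lies in stitching the two together through the intermediate extension $\bbk(s,t)/\Kst$.
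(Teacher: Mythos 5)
Your argument is correct and follows essentially the same route as the paper: use the Weil pairing to bound $\det r_{\cB_N}$ by the cyclotomic image, lift an element of $\Gal(K(\mu_N)/K)$ to an automorphism of the full torsion field over $K(s,t)$ to produce $(u_0,A_0)$ with $\det A_0 = \det A$, and then absorb the remaining $\SL{N}$-discrepancy using Theorem~\ref{thm:main} over $\bbk = K\cdot\Fp^{\alg}$. The only cosmetic difference is that you lift through the constant extension $\bbk(s,t)/K(s,t)$ whereas the paper lifts directly from $K(\zeta_N)$; the underlying mechanism is identical.
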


The structure of our paper is as follows: In Section~\ref{sec:gen_p}, we present a statement that implies Theorem~\ref{thm:main}, along with clues that guide its proof. Section~\ref{sec:N=S} uses these clues to complete the proof of the statement in Section~\ref{sec:gen_p}. In Section~\ref{sec:thm_and_cor}, we provide complete proofs of Theorem~\ref{thm:main} and Corollary~\ref{cor:gen}.

\

Throughout this paper,  all the fields under consideration--including the ground fields $\bbk$ and~$K$--have positive characteristic, denoted by~$p$, unless otherwise specified. We further assume that $p\geq 5$.

\section{A Key inductive argument toward the proof of Theorem~\ref{thm:main}}\label{sec:gen_p}
The key step to prove Theorem~\ref{thm:main} is to establish the intermediate induction steps on $N$. More precisely, we prove Proposition~\ref{prop:induction}, whose proof is given in Section~\ref{sec:thm_and_cor}.

\begin{prop}\label{prop:induction}
	Let $\ell$ be a prime and $N$ a positive integer such that $p\nmid N \ell$. If $G_{e, \cB_{N}} = \peu \times \SL{N}$ for a given basis $\cB_{N}$ of $E_{s,t}\left[N\right]$, then for any basis $\cB_{N \ell}$ of $E_{s,t}\left[N \ell\right]$, $$
		G_{e, \cB_{N \ell}} = \peu \times \SL{N \ell}.
	$$
\end{prop}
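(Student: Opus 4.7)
The plan is to reduce the statement to a linear-disjointness question about Galois extensions and then analyze the possible intersection both group-theoretically and arithmetically. Since $\rho_{e,\cB_{N\ell}}$ is injective by construction, only surjectivity must be proven. Set $F:=\Kst$, $K_{p^{e}}:=F\bigl(E_{s,t}[p^{e}]\bigr)$, and $L_{M}:=F\bigl(E_{s,t}[M]\bigr)$ for integers $M$ coprime to $p$. Because $\bbk\supseteq\bbf_{p}^{\alg}$ and the generic curve $E_{s,t}$ is ordinary, $K_{p^{e}}/F$ is in fact separable Galois with group $\peu$ by Theorem~\ref{thm:fact:pe}, while $L_{M}/F$ is Galois with group $\SL{M}$ by Theorem~\ref{thm:fact:N}. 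In this Galois framework the hypothesis $G_{e,\cB_{N}}=\peu\times\SL{N}$ is equivalent to $K_{p^{e}}\cap L_{N}=F$, and the desired conclusion $G_{e,\cB_{N\ell}}=\peu\times\SL{N\ell}$ is equivalent to $K_{p^{e}}\cap L_{N\ell}=F$.

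Set $E':=K_{p^{e}}\cap L_{N\ell}$. As a subextension of the cyclic extension $K_{p^{e}}/F$ it is cyclic of some order $d\mid\phi(p^{e})$; and since $[L_{N\ell}:F]$ is coprime to $p$, also $d\mid p-1$. The hypothesis $K_{p^{e}}\cap L_{N}=F$ forces $E'L_{N}/L_{N}$ to be cyclic of degree $d$ inside $L_{N\ell}$, and since $E'L_{N}/F$ is Galois with group $\bbz/d\bbz\times\SL{N}$ (direct product, by linear disjointness), the cyclic group $\bbz/d\bbz$ arises as an $\SL{N}$-equivariant quotient of $\ker\bigl(\SL{N\ell}\twoheadrightarrow\SL{N}\bigr)$ carrying trivial $\SL{N}$-action. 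The remaining task is to force $d=1$.

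For $\ell\geq 5$, and also for $\ell=3$ with $3\mid N$, this is a matter of group theory. If $\gcd(N,\ell)=1$ the kernel is the perfect group $\SL{\ell}$, which has no nontrivial abelian quotient. If $\ell^{a}\|N$ the kernel is the adjoint $\SL{\ell^{a}}$-module $\mathfrak{sl}_{2}(\bbf_{\ell})$, which is irreducible in characteristic $\ell\geq 3$ and has no nontrivial $\SL{\ell^{a}}$-equivariant cyclic quotient (the nonzero quotient $\mathfrak{sl}_{2}(\bbf_{\ell})$ itself is $3$-dimensional elementary abelian, hence non-cyclic). Either way $d=1$ is forced. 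The genuinely delicate cases are thus $\ell=2$ (in both subcases $2\mid N$ and $2\nmid N$) and $\ell=3$ with $\gcd(N,3)=1$ and $p\equiv 1\pmod{3}$, where group theory alone permits $d\in\{2,3\}$.

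The main obstacle is these exceptional small-$\ell$ cases, which demand the explicit arithmetic of $E_{s,t}$. I would identify the candidate cyclic subextensions of $L_{N\ell}/L_{N}$ concretely---for $\ell=2$, the quadratic extension $F\bigl(\sqrt{\Delta}\bigr)$ with $\Delta=-16(4s^{3}+27t^{2})$, and for $\ell=3$ a cubic extension obtained from the third division polynomial of $E_{s,t}$---and compare each with the unique subextension of the cyclic $K_{p^{e}}/F$ of the corresponding degree, which is cut out by the unit-root character of the formal group of the ordinary curve $E_{s,t}$. A sufficiently generic specialization $(s,t)\mapsto(s_{0},t_{0})\in\bbk^{2}$ at which $E_{s_{0},t_{0}}$ remains ordinary and the Hasse invariant is arithmetically independent from $\Delta$ (respectively, the $3$-division datum) produces the required contradiction. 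Carrying out this final decoupling between the $p^{e}$-torsion character and the low-level division-polynomial data is the technical heart of the proof.
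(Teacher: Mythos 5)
Your high-level skeleton matches the paper's: reduce the surjectivity of $\rho_{e,\cB_{N\ell}}$ to showing the intermediate field $\Lint$ collapses to $L_{N}$, note that the possible obstruction is a cyclic quotient of $\S{N\ell}{N}$, kill it by the perfectness of $\SL{\ell}$ for $\ell\ge 5$ with $\ell\nmid N$, and fall back on explicit arithmetic (the discriminant, division polynomials) for small $\ell$. However, there are several concrete gaps.

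First, the assertion that $K_{p^{e}}/F$ is separable Galois is false; the paper works with the automorphism group $\fkA$ precisely because $\Kst(E_{s,t}[p^{e}])/\Kst$ has inseparable degree $p^{e}$ (see the proof of Proposition~\ref{prop:2}, which states the inseparable degree of $L_{p}/L_{1}$ is $p$, and the pervasive use of $[\cdot:\cdot]_{\sep}$ in Proposition~\ref{prop:tool}). Your reformulation in terms of $K_{p^{e}}\cap L_{N\ell}=F$ still ends up being equivalent, but the reasoning that gets you there needs to track separable degrees, as in Lemma~\ref{lem:extn_deg} and \eqref{eqn:comparing_extn_deg}, not ordinary Galois linear disjointness.

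Second, for $\ell\mid N$ your argument relies on an irreducibility claim for $\mathfrak{sl}_{2}(\bbf_{\ell})$ that you only invoke for $\ell\ge 3$ and do not verify for $\ell=3$; moreover you list $\ell=2$ with $2\mid N$ among the cases requiring arithmetic input, whereas the paper's Proposition~\ref{prop:general} handles all $\ell\mid N$ (including $\ell=2$) by pure linear algebra: one does not need the module to be irreducible, only that no $2$-dimensional subspace is simultaneously invariant under the two concrete matrices $M_{1},M_{2}$ coming from $\begin{psmallmatrix}1&0\\1&1\end{psmallmatrix}$ and $\begin{psmallmatrix}1&1\\0&1\end{psmallmatrix}$, and this can be checked directly even when $\mathfrak{sl}_{2}(\bbf_{2})$ is reducible (it contains the line spanned by $I_{2}$).

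Third, and most importantly, the actual hard case ($\ell\in\{2,3\}$, $\ell\nmid N$, $\ell\mid p-1$) is only sketched, and the method you propose --- specialize $(s,t)$ to $\bbk^{2}$ at an ordinary point and decouple the Hasse/unit-root data from $\Delta$ --- is genuinely different from the paper's and has no proof attached. The paper instead constructs explicit discrete valuations $v$ on $L_{1}$ and $w$ on $L_{1}(x(E_{s,t}[p]))$ (Lemma~\ref{lem:IK24}, imported from \cite{IK24}), produces $\lambda=4s^{3}+27t^{2}$ with $\ell\mid v(\lambda)$ and $\sqrt[\ell]{\lambda}\in L_{\ell}$ (Lemma~\ref{lem:gen}), then produces $\mu\in L_{1}$ with $\ell\nmid v(\mu)$ and $\sqrt[\ell]{\mu}\in L_{p}$ (Propositions~\ref{prop:2} and \ref{prop:3}); Lemma~\ref{lem:cyc_cubic} then yields a contradiction with Lemma~\ref{lem:SL_comm} if the two Kummer extensions were equal. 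Notably the $p\equiv 3\pmod 4$ subcase of $\ell=2$ exploits the inseparable structure of $L_{p}/L_{1}$ directly (computing $w(b^{2}\mu)=-3$ at a $p$-torsion point), which your ordinary/unit-root framing would have to reproduce. Without an actual verification that a generic specialization preserves what you need (the fields $L_{\ell}$ and $L_{p}$ do not descend naively along specializations, and the ordinary locus is not enough), the final paragraph remains a plan rather than a proof.
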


Proposition~\ref{prop:induction} reduces Theorem~\ref{thm:main} for all positive integers $N$ into Theorem~\ref{thm:main} for $N=1$,  which follows from Theorem~\ref{thm:fact:pe}.

Now, we explain the strategy for proving Proposition~\ref{prop:induction}. For convenience, let $$L_{1} := \Kst,$$ and  for each integer $m\ge2$, let $L_{m}$ be the normal extension $\Kst\left(E_{s,t}\left[m\right]\right)$ of $\Kst$. For a given $N$ and a positive divisor $n$ of $N$, let \begin{equation}\label{eqn:subgroup_SL}
	\S{N}{n} := \left\{A \in \SL{N}: A \equiv \begin{pmatrix}1&0\\0&1\end{pmatrix}\pmod{n}\right\},
\end{equation} 
which is a normal subgroup of $\SL{N}$.
The main ingredient for proving Proposition~\ref{prop:induction} is Proposition~\ref{prop:tool}, which relies on  the following Lemma~\ref{lem:extn_deg} along with Theorem~\ref{thm:fact:N}.

\begin{lemma}\label{lem:extn_deg}
	Let $F_{0}$ be a field, $F_{1}/F_{0}$ a finite Galois extension, and $F_{2}/F_{0}$ a finite extension. Then, the following holds for the degrees and separable degrees:
	$$		\left[F_{1}F_{2} : F_{0}\right] = \frac{\left[F_{1}: F_{0}\right] \left[F_{2} : F_{0}\right] }{\left[F_{1} \cap F_{2} : F_{0}\right]}
		\text{ and }
		\left[F_{1}F_{2} : F_{0}\right]_{\sep} = \frac{\left[F_{1}: F_{0}\right] \left[F_{2} : F_{0}\right]_{\sep} }{\left[F_{1} \cap F_{2} : F_{0}\right]}.
	$$
\end{lemma}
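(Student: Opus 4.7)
The plan is to derive both identities from the standard Galois-theoretic fact that if $F_1/F_0$ is Galois and $F_2/F_0$ is any extension (embedded in a common algebraic closure of $F_0$), then $F_1 F_2 / F_2$ is Galois and the restriction map gives a canonical isomorphism
$$\Gal\left(F_1 F_2 / F_2\right) \xrightarrow{\sim} \Gal\left(F_1 / F_1 \cap F_2\right).$$
In particular $\left[F_1 F_2 : F_2\right] = \left[F_1 : F_1 \cap F_2\right]$.

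For the first identity I would apply the tower law $\left[F_1 F_2 : F_0\right] = \left[F_1 F_2 : F_2\right]\left[F_2 : F_0\right]$, substitute the Galois identity above, and rewrite $\left[F_1 : F_1 \cap F_2\right]$ as $\left[F_1 : F_0\right]/\left[F_1 \cap F_2 : F_0\right]$ via the tower $F_0 \subseteq F_1 \cap F_2 \subseteq F_1$. This immediately yields the first equation.

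For the separable-degree identity, I would use multiplicativity of separable degree in towers to write $\left[F_1 F_2 : F_0\right]_{\sep} = \left[F_1 F_2 : F_2\right]_{\sep} \cdot \left[F_2 : F_0\right]_{\sep}$, and then invoke two separability facts: since $F_1/F_0$ is Galois, it is separable, so $F_1 F_2 / F_2$ is separable (a compositum of a separable extension with any extension is separable over the larger base), giving $\left[F_1 F_2 : F_2\right]_{\sep} = \left[F_1 F_2 : F_2\right]$; and $F_1 \cap F_2 / F_0$ is separable (as a subextension of $F_1/F_0$), so $\left[F_1 \cap F_2 : F_0\right]_{\sep} = \left[F_1 \cap F_2 : F_0\right]$. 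Substituting the Galois identity $\left[F_1 F_2 : F_2\right] = \left[F_1 : F_0\right]/\left[F_1 \cap F_2 : F_0\right]$ then delivers the claimed formula.

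There is essentially no obstacle here beyond being careful with the roles of $F_1$ and $F_2$: the Galois hypothesis is placed on $F_1$, while $F_2$ is permitted to be inseparable, which is precisely why only $[F_2 : F_0]_{\sep}$ (and not $[F_2 : F_0]$) appears in the separable-degree formula. The argument is a bookkeeping assembly of standard facts; no finer input is needed.
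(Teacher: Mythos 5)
Your proof is correct and follows essentially the same route as the paper: invoke the Galois restriction isomorphism to get $[F_1F_2:F_2]=[F_1:F_1\cap F_2]$, apply the tower law, and for the separable-degree version note that $F_1F_2/F_2$ is separable so its separable and ordinary degrees coincide. (Your observation that $F_1\cap F_2/F_0$ is separable is harmless but not actually needed, since the denominator in the statement is the ordinary degree $[F_1\cap F_2:F_0]$, which comes straight from the ordinary tower law inside $F_1/F_0$.)
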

\begin{proof}
	Since $F_{1}/F_{0}$ is Galois, both $F_{1}F_2/F_{2}$ and $F_1/F_1\cap F_2$ are Galois, so we have
	\begin{align*}
		\left[F_{1}F_{2} : F_{0}\right] & = \left[F_{1}F_{2} : F_{2}\right] \left[F_{2} : F_{0}\right] = \left[F_{1} : F_{1} \cap F_{2}\right] \left[F_{2} : F_{0}\right] = \frac{\left[F_{1}: F_{0}\right] \left[F_{2} : F_{0}\right] }{\left[F_{1} \cap F_{2} : F_{0}\right]},
	\end{align*}
	and
	\begin{align*}
		\left[F_{1}F_{2} : F_{0}\right]_{\sep}= \left[F_{1}F_{2} : F_{2}\right] \left[F_{2} : F_{0}\right]_{\sep} = \left[F_{1} : F_{1} \cap F_{2}\right] \left[F_{2} : F_{0}\right]_{\sep} = \frac{\left[F_{1} : F_{0}\right]\left[F_{2} : F_{0}\right]_{\sep}}{\left[F_{1} \cap F_{2}:F_{0}\right]}.
	\end{align*}
\end{proof}

For a given basis $\cB_{N \ell}$ of $E_{s,t}\left[N \ell\right]$, we let  $$
	\cNintersec :=r_{\cB_{N \ell}}\left(\Gal\left(L_{N \ell} / \Lint\right)\right).
$$

\begin{prop}\label{prop:tool}
	Let $\ell$ be a prime and $N$ a positive integer such that $p\nmid N \ell$.	If $G_{e, \cB_{N}} = \peu \times \SL{N}$ for a basis $\cB_{N}$ of $E_{s,t}\left[N\right]$, then, for a basis $\cB_{N \ell}$ of $E_{s,t}\left[N \ell\right]$, we have the following: \begin{enumerate}[\normalfont (a)]
		\item \label{normal} $\cNintersec$ is a normal subgroup of $\SL{N \ell}$. 
		\item \label{cyc} $\cNintersec$ is a normal subgroup of $\S{N \ell}{N}$ defined in~\eqref{eqn:subgroup_SL}, and $\S{N \ell}{N} / \cNintersec$ is cyclic.
		\item \label{goal} If $\cNintersec = \S{N \ell}{N}$, then $G_{e, \cB_{N \ell}} = \peu \times \SL{N \ell}$.
	\end{enumerate}
\end{prop}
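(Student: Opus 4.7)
The plan is to interpret each of the three parts via the Galois correspondence, combined with Theorem~\ref{thm:fact:N}, the hypothesis $G_{e,\cB_{N}}=\peu\times\SL{N}$, and Lemma~\ref{lem:extn_deg}. Without loss of generality, I take $\cB_{N}$ to be the reduction of $\cB_{N\ell}$ modulo~$N$; Theorem~\ref{thm:fact:N} then realizes $r_{\cB_{N\ell}}$ as an isomorphism $\Gal(L_{N\ell}/L_{1})\cong\SL{N\ell}$ that carries $\Gal(L_{N\ell}/L_{N})$ onto $\S{N\ell}{N}$, because a Galois element fixes $L_{N}$ exactly when its matrix is the identity modulo~$N$.

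For part~(a), I would observe that $L_{p^{e}N}$ and $L_{N\ell}$ are both normal over $L_{1}$, hence so is their intersection $F:=L_{p^{e}N}\cap L_{N\ell}$; being contained in the separable extension $L_{N\ell}/L_{1}$, the field $F$ is in fact Galois over $L_{1}$. The Galois correspondence in $L_{N\ell}/L_{1}$ then makes $\Gal(L_{N\ell}/F)$ normal in $\Gal(L_{N\ell}/L_{1})$, which after transport by $r_{\cB_{N\ell}}$ is exactly the normality of $\cNintersec$ in $\SL{N\ell}$. For part~(b), from $N\mid p^{e}N$ and $N\mid N\ell$ I get $L_{N}\subseteq F$, which yields $\cNintersec\subseteq\S{N\ell}{N}$, with normality inherited from part~(a). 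Next, the Galois correspondence inside $L_{N\ell}/L_{N}$ identifies
\[
\S{N\ell}{N}\,/\,\cNintersec\;\cong\;\Gal(F/L_{N}),
\]
which is a quotient of $\Gal(L_{p^{e}N}/L_{N})$ by restriction. The hypothesis identifies $\Gal(L_{p^{e}N}/L_{N})$ with the kernel of $\peu\times\SL{N}\twoheadrightarrow\SL{N}$, namely $\peu$; since $p$ is odd, $\peu$ is cyclic, and so is $\Gal(F/L_{N})$.

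For part~(c), the assumption $\cNintersec=\S{N\ell}{N}$ translates via Galois theory to $F=L_{N}$. Using $L_{p^{e}N}L_{N\ell}=L_{p^{e}}L_{N\ell}=L_{p^{e}N\ell}$ and applying Lemma~\ref{lem:extn_deg} with $(F_{0},F_{1},F_{2})=(L_{1},L_{N\ell},L_{p^{e}N})$, I would compute
\[
[L_{p^{e}N\ell}:L_{1}]=\frac{[L_{N\ell}:L_{1}]\cdot[L_{p^{e}N}:L_{1}]}{[L_{N}:L_{1}]}=\frac{|\SL{N\ell}|\cdot|\peu\times\SL{N}|}{|\SL{N}|}=|\peu\times\SL{N\ell}|,
\]
where the individual degrees come from Theorem~\ref{thm:fact:N} and the hypothesis. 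Every extension in the tower is separable, so $|\fkA(L_{p^{e}N\ell}/L_{1})|$ equals this degree; consequently $|G_{e,\cB_{N\ell}}|=|\peu\times\SL{N\ell}|$, and the inclusion $G_{e,\cB_{N\ell}}\subseteq\peu\times\SL{N\ell}$ is forced to be an equality. The principal care point I foresee is keeping straight the dictionary between composites/intersections of intermediate fields and the corresponding subgroups across two different Galois correspondences (one in $L_{N\ell}/L_{1}$, one in $L_{N\ell}/L_{N}$); once this dictionary is in place, each part reduces to a short application of Lemma~\ref{lem:extn_deg} or of the Galois correspondence itself.
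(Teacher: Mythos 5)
Your parts~(a) and~(b) are correct and essentially identical to the paper's argument: normality of $\cNintersec$ comes from $\Lint$ being an intersection of two normal extensions over $L_{1}$ (resp.\ over $L_{N}$), and cyclicity of $\S{N\ell}{N}/\cNintersec\cong\Gal(\Lint/L_{N})$ comes from it being a quotient of $\fkA(L_{p^{e}N}/L_{N})\cong\peu$.

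Part~(c), however, contains a genuine error. You claim ``every extension in the tower is separable,'' but this is false whenever $e\ge 1$: the extension $L_{p^{e}N}/L_{N}$ (and hence $L_{p^{e}N\ell}/L_{1}$) is a \emph{normal but not separable} extension. Indeed, the paper explicitly uses that the inseparable degree of $L_{p}/L_{1}$ is $p$ (cited from \cite[Proposition~3.10]{IK24} in the proof of Proposition~\ref{prop:3}), and this is exactly why Lemma~\ref{lem:extn_deg} is stated and proved with both the degree and the separable degree. The error propagates into your numerical substitution: the hypothesis $G_{e,\cB_{N}}=\peu\times\SL{N}$ gives $\abs{\fkA(L_{p^{e}N}/L_{1})}=[L_{p^{e}N}:L_{1}]_{\sep}=\abs{\peu\times\SL{N}}$, not $[L_{p^{e}N}:L_{1}]=\abs{\peu\times\SL{N}}$ as you write. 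Consequently $\abs{G_{e,\cB_{N\ell}}}$ equals the \emph{separable} degree $[L_{p^{e}N\ell}:L_{1}]_{\sep}$, and the whole computation must be carried out using the separable-degree version of Lemma~\ref{lem:extn_deg}:
\[
[L_{p^{e}N\ell}:L_{1}]_{\sep}=\frac{[L_{N\ell}:L_{1}]\,[L_{p^{e}N}:L_{1}]_{\sep}}{[\Lint:L_{1}]}
=\frac{[L_{N\ell}:L_{1}]\,[L_{p^{e}N}:L_{N}]_{\sep}}{[\Lint:L_{N}]},
\]
which under $\Lint=L_{N}$ yields $\abs{\SL{N\ell}}\cdot\abs{\peu}$. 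Your final equality happens to be numerically correct, but the justification collapses because it is grounded on the false separability assumption; to repair the argument you must replace degrees by separable degrees throughout, exactly as the paper does.
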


\begin{proof}
	It is easy to show that $L_{p^{e} N \ell}$ is the compositum of two normal extensions $L_{N \ell}$ and $L_{p^{e} N}$ over $L_{N}$. By Theorem~\ref{thm:fact:N}, we have the following diagrams, where the dotted markings on the group side indicate statements that need to be proved:
	\begin{figure}[H]
		\centering
\begin{tikzcd}
								& {\color{lightgray}L_{p^{e} N \ell}}							 &								&																																								  &					 \\
{\color{lightgray}L_{p^{e} N} \arrow[lightgray, ru, no head]} &											  & L_{N \ell} \arrow[lightgray, lu, no head] &																																								  & \left\{I_{2}\right\} \\
								& \Lint \arrow[lightgray, lu, no head] \arrow[ru, no head] &								& {\cN_{\ell,\cB_{N\ell},p^{e}}} \arrow[ru, no head]																											  &					 \\
								& L_{N} \arrow[u, no head]					 &								& \S{N\ell}{N} \arrow[u, no head] \arrow[u, "\text{(b) normal and cyclic}"', no head, dashed, bend right=49]														  &					 \\
								& L_{1} = \Kst \arrow[u, no head]			  &								& \SL{N\ell} \arrow[u, no head] \arrow[uu, "\text{(a) normal}", no head, dashed, bend left=49] &					 
\end{tikzcd}
		\caption{The diagrams of subfields of $L_{N \ell}$ over $L_{1}$ and their corresponding Galois groups of $L_{N \ell}$ over a field containing $L_{1}$}
		\label{fig:fld_extn}
	\end{figure}
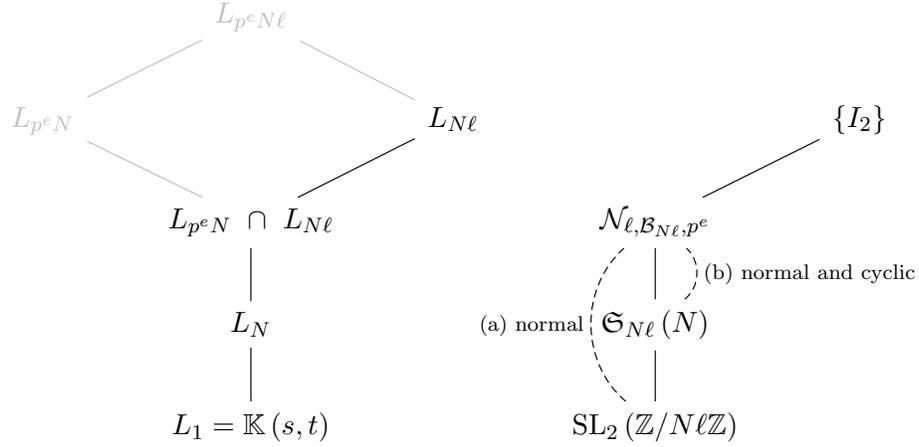
	
	The part \ref{normal} follows from the fact that $\Lint$ is the intersection of two normal extensions $L_{N \ell}$ and $L_{p^{e} N}$ over $L_{1}$. 
	
	To prove \ref{cyc}, we first observe that $\cNintersec$ is a normal subgroup of $\S{N \ell}{N}$, since $\Lint$ equals the intersection of two normal extensions $L_{N \ell}$ and $L_{p^{e} N}$ over $L_{N}$. The group $\S{N \ell}{N} / \cNintersec$ is isomorphic to the Galois group of $\Lint $ over $ L_{N}$, and it is a quotient group of the cyclic automorphism group of the normal extension $L_{p^{e} N}$ over $L_{N}$ by Theorem~\ref{thm:fact:pe}, so it is also cyclic.

	Now, for \ref{goal}, we first note that by Lemma~\ref{lem:extn_deg}, we have that \begin{align}\label{eqn:comparing_extn_deg}
		\notag\left|G_{e, \cB_{N \ell}}\right| & = \left[L_{p^{e} N \ell}:L_{1}\right]_{\sep} = \frac{\left[L_{N \ell}:L_{1}\right]\left[L_{p^{e} N}:L_{1}\right]_{\sep}}{\left[\Lint:L_{1}\right]} 
		= \frac{\left[L_{N \ell}:L_{1}\right] \left[L_{p^{e} N}:L_{1}\right]_{\sep} }{\left[\Lint:L_{N}\right] \left[L_{N}:L_{1}\right]} \\
		 &= \frac{\left[L_{N \ell}:L_{1}\right] \left[L_{p^{e} N}:L_{1}\right]_{\sep} }{\left[\Lint:L_{N}\right] \left[L_{N}:L_{1}\right]_{\sep}} 
		= \frac{\left[L_{N \ell}:L_{1}\right] \left[L_{p^{e} N}:L_{N}\right]_{\sep} }{\left[\Lint:L_{N}\right]}.
	\end{align} 
	We also have $\left[L_{N \ell}:L_{1}\right] = \left| \SL{N \ell}\right|$ by Theorem~\ref{thm:fact:N}, and \begin{align*}
		\left[L_{p^{e} N}:L_{N}\right]_{\sep} = \left| \left\{\left(u,A\right) \in \peu \times \SL{N} : A = \begin{pmatrix}1&0\\0&1\end{pmatrix}\right\} \right| = \left| \peu \right|
	\end{align*} by the assumption that $G_{e, \cB_{N}} = \peu \times \SL{N}$. 
	
	If $\cNintersec = \S{N \ell}{N}$, then $\Lint = L_{N}$, since the extension $\Lint$ over $L_{N}$ is separable. In this case, we see by \eqref{eqn:comparing_extn_deg} above that $$
		\left|G_{e, \cB_{N \ell}}\right| = \left[L_{N \ell}:L_{1}\right] \left[L_{p^{e} N}:L_{N}\right]_{\sep} = \left|\SL{N \ell}\right| \left| \peu \right|.
	$$ Since $G_{e, \cB_{N \ell}} \subseteq \peu \times \SL{N \ell}$, we conclude that $G_{e, \cB_{N \ell}} = \peu \times \SL{N \ell}$.
\end{proof}

To prove Proposition~\ref{prop:induction} in Section~\ref{sec:thm_and_cor}, we apply Proposition~\ref{prop:tool}~\ref{goal} and demonstrate $\cNintersec = \S{N \ell}{N}$ referring to Figure~\ref{fig:fld_extn}, whose proof is divided into the following cases, each requiring substantially different arguments: \begin{enumerate}[{label=(\Alph*)}]
	\item $\ell \mid N$,
	\item $\ell \nmid N$, for $\ell\geq 5$, and
	\item $\ell \nmid N$, for $\ell \in \left\{2,3\right\}$.
\end{enumerate}

In Section~\ref{subsec:general}, we analyze the group structure of $\SL{N}$ and show that $\cNintersec = \S{N \ell}{N}$ for Case~(A) and Case~(B). In Section~\ref{subsec:2or3}, we prove $\cNintersec = \S{N \ell}{N}$ for Case~(C).

\section{Proof of $\cNintersec = \S{N \ell}{N}$}\label{sec:N=S}
\subsection{The cases when $\ell \ge 5$ or $\ell \mid N$}\label{subsec:general}

In this section, we prove $\cNintersec = \S{N \ell}{N}$  for the cases when $\ell \ge 5$ or $\ell \mid N$. To achieve this, we need to analyze the group structures of ~$\SL{N}$, as described in Lemma~\ref{lem:SLNN} and Lemma~\ref{lem:SLl_comm}.

\begin{lemma}\label{lem:SLNN} For a prime $\ell$ and a positive integer $N$, \begin{enumerate}[\normalfont (a)]
		\item\label{SL} if $\ell \nmid N$, then $\S{N \ell}{N}\cong \operatorname{SL}_{2}\left(\Fl\right),$ and 
		\item\label{vec} if $\ell \mid N$, then $\S{N \ell}{N}\cong \left(\bbz/\ell\bbz\right)^{3}$.
	\end{enumerate}
\end{lemma}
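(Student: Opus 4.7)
The plan is to realize each side of the claimed isomorphism on the nose through an explicit parametrization adapted to whether $\ell$ is coprime to $N$ (part (a)) or divides $N$ (part (b)).

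For part (a), I would invoke the Chinese Remainder Theorem at the ring level: when $\gcd(N,\ell)=1$, the ring isomorphism $\bbz/N\ell\bbz \cong \bbz/N\bbz\times \bbz/\ell\bbz$ lifts to a group isomorphism $\SL{N\ell}\cong \SL{N}\times \operatorname{SL}_{2}(\Fl)$. Under this identification, the defining condition $A\equiv I_{2}\pmod N$ for $\S{N\ell}{N}$ forces the first factor to be trivial and leaves the second factor unconstrained, so $\S{N\ell}{N}$ corresponds to $\{I\}\times \operatorname{SL}_{2}(\Fl)$, yielding the asserted isomorphism.

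For part (b), where $\ell\mid N$, I would parametrize a general element of $\S{N\ell}{N}$ as $A = I + N B$, with $B$ ranging over $M_{2}(\bbz/\ell\bbz)$. This parametrization is well-defined and bijective because $N\cdot B\pmod{N\ell}$ depends only on $B\pmod\ell$, and because the multiples-of-$N$ subgroup of $\bbz/N\ell\bbz$ is canonically $\bbz/\ell\bbz$. The crucial arithmetic fact is $N^{2}\equiv 0\pmod{N\ell}$, which holds precisely when $\ell\mid N$. This truncation kills the quadratic term both in
\[
(I + NB_{1})(I + NB_{2}) \equiv I + N(B_{1}+B_{2}) \pmod{N\ell}
\]
and in
\[
\det(I + NB) \equiv 1 + N\operatorname{tr}(B) \pmod{N\ell},
\]
so $B\mapsto I + NB$ turns matrix addition into matrix multiplication, while the unimodularity constraint translates to $\operatorname{tr}(B)\equiv 0\pmod\ell$. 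Hence $\S{N\ell}{N}$ is isomorphic to the additive group of trace-zero matrices in $M_{2}(\Fl)$, which is a three-dimensional $\Fl$-vector space (e.g.\ spanned by $E_{12}$, $E_{21}$, $E_{11}-E_{22}$), proving $\S{N\ell}{N}\cong (\bbz/\ell\bbz)^{3}$.

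The argument is really linear algebra with careful bookkeeping, so there is no serious obstacle. The only subtlety worth watching is the setup in (b): one must verify rigorously that $B\mapsto I+NB$ is a well-defined bijection from $M_{2}(\bbz/\ell\bbz)$ onto the reduction-mod-$N$ kernel in $M_{2}(\bbz/N\ell\bbz)$ before exploiting the simplification $N^{2}\equiv 0\pmod{N\ell}$. Once that is in place, both parts collapse to a few lines of computation.
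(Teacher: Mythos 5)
Your proof is correct and follows essentially the same approach as the paper: part (a) via the Chinese Remainder Theorem reduction to the $\ell$-component, and part (b) via the parametrization $A = I + NB$ with $B$ a trace-zero matrix over $\Fl$, using $N^{2}\equiv 0\pmod{N\ell}$ to linearize both multiplication and the determinant condition. The paper phrases (b) in terms of an explicit isomorphism $\nu\colon M_{2}(\bbz/\ell\bbz)\to M_{2}(N\bbz/N\ell\bbz)$, but this is the same construction.
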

\begin{proof}
	\ref{SL} Suppose $\ell \nmid N$. We show that the mod-$\ell$ reduction $\pi: \S{N \ell}{N} \to \operatorname{SL}_{2}\left(\Fl\right)$ is an isomorphism using the Chinese remainder theorem. By this theorem, the only $A \in \ker \pi$ satisfying $A \equiv I_{2} \pmod{N}$ is $A = I_{2}$, where $I_2 \in \SL{N \ell}$ denotes the identity matrix. Hence, $\pi$ is injective. Furthermore, for any $B \in \operatorname{SL}_{2}\left(\Fl\right)$, the Chinese remainder theorem shows the existence of $A \in \SL{N \ell}$ such that $A \equiv B \pmod{\ell}$ and $A \equiv I_{2} \pmod{N}$. Hence, $\pi\left(A\right) = B$ and $\pi$ is surjective.

	\ref{vec} Suppose $\ell \mid N$. Consider the  group isomorphism $\nu: \operatorname{M}_{2}\left(\bbz/\ell\bbz\right) \to \operatorname{M}_{2}\left(N \bbz/N \ell \bbz\right)$ of additive groups of $2\times 2$ matrices, induced by the multiplication homomorphism $\bbz \to N\bbz$ given by $x\mapsto Nx$. Since $\ell \mid N$, the relation $\left(I_{2} + \nu\left(A\right)\right)\left(I_{2}+ \nu\left(B\right)\right) = I_{2}+\nu\left(A+B\right)$ holds for all matrices $A, B \in \operatorname{M}_{2}\left(\bbz/\ell\bbz\right)$, and we have $I_{2} + \nu\begin{pmatrix}x&y\\z&-x\end{pmatrix} \in \S{N \ell}{N}$ for all $x,y,z\in \bbz/\ell\bbz$. In other words, we have a group homomorphism $\gamma: \left(\bbz/\ell\bbz\right)^{3} \to \S{N \ell}{N}$ defined by \begin{equation}\label{gamma}
		\gamma\begin{pmatrix}x\\y\\z\end{pmatrix} = I_{2} + \nu \begin{pmatrix}x&y\\z&-x\end{pmatrix}.
	\end{equation}
	It is clear that $\gamma$ is injective. To show that $\gamma$ is surjective, consider any $X \in \S{N \ell}{N}$. Then, $X$ is expressed as $I_{2} + \nu\begin{pmatrix}a&b\\c&d\end{pmatrix}$ for some $a,b,c,d\in \bbz/\ell\bbz$. Since $\ell \mid N$, the condition that $\det\left(X\right) = 1$ implies $d = -a$. Therefore, $\gamma$ is surjective, so it is an isomorphism.
\end{proof}

\begin{lemma}\label{lem:SLl_comm}
\
	\begin{enumerate}[\normalfont (a)]
		\item\label{SLl_comm_small_ell} If $\ell \in \left\{2,3\right\}$, then there exists a unique proper normal subgroup $\cN$ of $\SL{\ell}$ such that the quotient group $\SL{\ell}/\cN$ is abelian, and in this case, the index $[\SL{\ell}:\cN]=\ell$.
		\item\label{SLl_comm_large_ell} If $\ell \ge 5$ is a prime, then no such proper normal subgroup of $\SL{\ell}$ with abelian quotient as in \ref{SLl_comm_small_ell} exists.
	\end{enumerate}
\end{lemma}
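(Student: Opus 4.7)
The plan is to recast the statement in terms of the abelianization of $G := \operatorname{SL}_{2}(\Fl)$. A normal subgroup $\cN \trianglelefteq G$ has abelian quotient if and only if $\cN$ contains the commutator subgroup $[G,G]$, so proper normal subgroups of $G$ with abelian quotient correspond bijectively, via the correspondence theorem, to proper subgroups of the abelianization $G^{\operatorname{ab}} := G / [G,G]$. The whole lemma therefore reduces to computing $G^{\operatorname{ab}}$ and showing that it is trivial when $\ell \geq 5$ and cyclic of order $\ell$ when $\ell \in \left\{2,3\right\}$.

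For part (b), I would prove that $G$ is perfect by means of the commutator identity
\[
    \left[\begin{pmatrix}a & 0\\0 & a^{-1}\end{pmatrix},\;\begin{pmatrix}1 & x\\0 & 1\end{pmatrix}\right] = \begin{pmatrix}1 & (a^{2}-1)x\\0 & 1\end{pmatrix},
\]
together with its analogue producing arbitrary lower unipotent matrices. When $\ell \geq 5$, the cyclic group $\Fl^{\times}$ has order $\ell - 1 \geq 4$ and so contains an element $a$ with $a^{2} \neq 1$; then $a^{2} - 1 \in \Fl^{\times}$, and every upper (resp.\ lower) unipotent matrix arises as a commutator. Since upper and lower unipotent matrices generate $\operatorname{SL}_{2}(\Fl)$, we obtain $G = [G,G]$, so $G^{\operatorname{ab}}$ is trivial and no proper normal subgroup of $G$ has abelian quotient.

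For part (a), I would compute $G^{\operatorname{ab}}$ by hand in each case. For $\ell = 2$, the action of $\operatorname{SL}_{2}(\bbf_{2})$ on the three nonzero vectors of $\bbf_{2}^{2}$ yields an isomorphism $G \cong S_{3}$, whose abelianization is $\bbz/2\bbz$. For $\ell = 3$, I would use the central extension $1 \to \left\{\pm I\right\} \to G \to \operatorname{PSL}_{2}(\bbf_{3}) \cong A_{4} \to 1$: the commutator subgroup of $A_{4}$ is the Klein four group $V_{4}$, whose preimage in $G$ is a quaternion group $Q_{8}$, and a direct computation of an explicit commutator yields $[G,G] = Q_{8}$, hence $G^{\operatorname{ab}} \cong A_{4}/V_{4} \cong \bbz/3\bbz$. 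In both cases $G^{\operatorname{ab}}$ is cyclic of prime order $\ell$, whose only proper subgroup is the trivial one; this pulls back to the unique proper normal subgroup $\cN = [G,G]$, which has index $\ell$, as asserted.

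The main technical point lies in the $\ell = 3$ case: verifying $[G,G] = Q_{8}$ rather than a proper subgroup of $Q_{8}$, equivalently that $-I \in [G,G]$. One clean route is to exhibit an explicit pair of elements of $\operatorname{SL}_{2}(\bbf_{3})$ whose commutator equals $-I$; another is to invoke the classical identification of $\operatorname{SL}_{2}(\bbf_{3})$ with the binary tetrahedral group, for which the abelianization $\bbz/3\bbz$ is standard.
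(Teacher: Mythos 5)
Your proof is correct and follows essentially the same strategy as the paper: reduce the statement to computing the commutator subgroup of $\SL{\ell}$, show it is the whole group when $\ell \ge 5$, and show it has prime index $\ell$ when $\ell \in \left\{2,3\right\}$, so that the abelianization is trivial or cyclic of order $\ell$ and the proper normal subgroups with abelian quotient are classified by the correspondence theorem. The only difference is that you make the argument self-contained where the paper delegates: you supply the commutator identity proving perfection for $\ell\ge5$ whereas the paper cites \cite[Theorem~1.9]{Gr}, and you identify $\operatorname{SL}_2(\bbf_2)\cong S_3$ and $\operatorname{SL}_2(\bbf_3)$ as the binary tetrahedral group (with $[G,G]\cong Q_8$) whereas the paper writes only ``by direct computations.'' (For completeness, a pair witnessing $-I\in[\operatorname{SL}_2(\bbf_3),\operatorname{SL}_2(\bbf_3)]$ is $A=\begin{psmallmatrix}0&-1\\1&0\end{psmallmatrix}$, $B=\begin{psmallmatrix}1&1\\1&-1\end{psmallmatrix}$.)
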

\begin{proof}
	If $\SL{\ell}/\cN$ is abelian for a proper normal subgroup $\cN$ of $\SL{\ell}$, then $\cN$ contains the commutator subgroup of $\SL{\ell}$. If $\ell\ge 5$, then by \cite[Theorem~1.9]{Gr}, the commutator subgroup of $\SL{\ell}$ is $\SL{\ell}$ itself. Therefore, there is no proper normal subgroup of $\SL{\ell}$.
	
	If $\ell \in \left\{2,3\right\}$, by direct computations, we can show that the index of the commutator subgroup in $\SL{\ell}$ is $\ell$. Therefore, $\cN$ is equal to the commutator subgroup and so it has index $\ell$.
\end{proof}

Now, we are ready to prove $\cNintersec = \S{N \ell}{N}$  for the cases when $\ell \ge 5$ or $\ell \mid N$.

\begin{prop}\label{prop:general}
	Let $\ell$ be a prime and $N$ a positive integer such that $p\nmid N \ell$ and assume either $\ell \ge 5$ or $\ell \mid N$. If $G_{e, \cB_{N}} = \peu \times \SL{N}$ for a basis $\cB_{N}$ of $E_{s,t}\left[N\right]$, then $\cNintersec = \S{N \ell}{N}$ referring to Figure~\ref{fig:fld_extn}, for any basis $\cB_{N \ell}$ of $E_{s,t}\left[N \ell\right]$.
\end{prop}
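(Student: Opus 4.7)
The plan is to combine parts~\ref{normal} and~\ref{cyc} of Proposition~\ref{prop:tool} with the structural results of Lemmas~\ref{lem:SLNN} and~\ref{lem:SLl_comm}, splitting into the two sub-cases $\ell\ge 5$, $\ell\nmid N$ and $\ell\mid N$. In either sub-case, $\cNintersec$ is a normal subgroup of $\S{N\ell}{N}$ with cyclic quotient, and is moreover normal in the larger group $\SL{N\ell}$; the goal is to force this normal subgroup to be all of $\S{N\ell}{N}$.

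For the first sub-case, Lemma~\ref{lem:SLNN}\ref{SL} identifies $\S{N\ell}{N}$ with $\operatorname{SL}_{2}(\Fl)$. Since a cyclic quotient is abelian, Lemma~\ref{lem:SLl_comm}\ref{SLl_comm_large_ell} immediately forces $\cNintersec = \S{N\ell}{N}$, and this case is finished with essentially no further work.

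For the second sub-case, Lemma~\ref{lem:SLNN}\ref{vec} identifies $\S{N\ell}{N}$ with the elementary abelian $\ell$-group $(\bbz/\ell\bbz)^{3}$. Any cyclic quotient of an elementary abelian $\ell$-group has order $1$ or $\ell$, so either we are done or $\cNintersec$ has index $\ell$ and corresponds to a $2$-dimensional $\Fl$-subspace stabilized by conjugation by $\SL{N\ell}$. A direct computation using the formula~\eqref{gamma} together with $\ell\mid N$ shows that $g\,\gamma(v)\,g^{-1}=\gamma(\bar g\, v\, \bar g^{-1})$ for $g\in\SL{N\ell}$, where $\bar g\in\operatorname{SL}_{2}(\Fl)$ is the mod-$\ell$ reduction and $v$ is viewed as a trace-zero matrix in $\operatorname{sl}_{2}(\Fl)$. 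Hence the conjugation action factors through the adjoint representation of $\operatorname{SL}_{2}(\Fl)$ on $\operatorname{sl}_{2}(\Fl)$, and the problem reduces to showing that this representation admits no codimension-one invariant subspace.

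For $\ell\ge 3$, the adjoint representation of $\operatorname{SL}_{2}(\Fl)$ on $\operatorname{sl}_{2}(\Fl)$ is well known to be irreducible, so no proper nonzero invariant subspace exists and we are done. The main obstacle, as I see it, is the case $\ell = 2$: in characteristic two, $I_{2}$ itself has trace zero, so $\langle I_{2}\rangle$ is an invariant line and the adjoint representation is reducible. I would resolve this remaining case by a hands-on calculation with generators of $\operatorname{SL}_{2}(\bbf_{2})\cong S_{3}$: taking a transposition matrix and a unipotent generator, one verifies directly on the basis $I_{2}, E_{12}, E_{21}$ of $\operatorname{sl}_{2}(\bbf_{2})$ that the complete list of proper invariant subspaces is $\{0\}$ and $\langle I_{2}\rangle$, and in particular no $2$-dimensional invariant subspace exists. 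This rules out the index-$\ell$ alternative uniformly in the second sub-case, yielding $\cNintersec=\S{N\ell}{N}$.
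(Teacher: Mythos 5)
Your treatment of the sub-case $\ell\ge 5$, $\ell\nmid N$ coincides with the paper's. For $\ell\mid N$, your reformulation via the adjoint representation is a clean packaging of what the paper does concretely with the two matrices $M_{1}=f\begin{psmallmatrix}1&0\\1&1\end{psmallmatrix}$ and $M_{2}=f\begin{psmallmatrix}1&1\\0&1\end{psmallmatrix}$, and for $\ell\ge 3$ the irreducibility of $\operatorname{sl}_{2}(\Fl)$ as an $\operatorname{SL}_{2}(\Fl)$-module (which you should also check at $\ell=3$, where it holds because the trace form on $\operatorname{sl}_{2}(\Fl)$ is still nondegenerate) does settle the case, equivalently to the paper's explicit Jordan-form computation.

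The $\ell=2$ step, however, is a genuine gap: the hands-on calculation you defer to would refute rather than confirm your claim. The adjoint action of $\operatorname{SL}_{2}(\bbf_{2})$ on $\operatorname{sl}_{2}(\bbf_{2})$ does admit a $2$-dimensional invariant subspace, namely
\[
W=\bigl\{aI_{2}+bE_{12}+cE_{21}\ :\ a+b+c=0\bigr\}=\operatorname{span}\bigl\{I_{2}+E_{12},\ I_{2}+E_{21}\bigr\}.
\]
Its nonzero elements are exactly the three involutions $\begin{psmallmatrix}1&1\\0&1\end{psmallmatrix},\begin{psmallmatrix}1&0\\1&1\end{psmallmatrix},\begin{psmallmatrix}0&1\\1&0\end{psmallmatrix}$ of $\operatorname{SL}_{2}(\bbf_{2})$, a single conjugacy class that is closed under pairwise addition, so $W$ is a conjugation-stable plane. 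In the paper's $(x,y,z)$-coordinates $W=\{x+y+z=0\}$, and indeed $(1,1,1)$ is a common left eigenvector of both $M_{1}\equiv\begin{psmallmatrix}1&1&0\\0&1&0\\0&1&1\end{psmallmatrix}$ and $M_{2}\equiv\begin{psmallmatrix}1&0&1\\0&1&1\\0&0&1\end{psmallmatrix}\pmod 2$. Hence the two hypotheses actually used, that $\cNintersec$ is normal in $\SL{N\ell}$ and that $\S{N\ell}{N}/\cNintersec$ is cyclic, do not rule out $\gamma^{-1}\bigl(\cNintersec\bigr)=W$ when $\ell=2$; some further input specific to $\cNintersec$ is required to close this case. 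You should also be aware that the paper's own proof of Proposition~\ref{prop:general} contains the identical gap: it asserts that $M_{1}$ has exactly two invariant $2$-planes over $\bbf_{2}$ and lists them, but the correct count is three, and the omitted plane is precisely $W$, which is moreover $M_{2}$-invariant.
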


\begin{proof}
	If $\ell \ge 5$ and $\ell \nmid N$, by Proposition~\ref{prop:tool}~\ref{cyc}, Lemma~\ref{lem:SLNN}~\ref{SL}, and Lemma~\ref{lem:SLl_comm}~\ref{SLl_comm_large_ell}, we have $\cNintersec = \S{N \ell}{N}$.
	
	If $\ell \mid N$, then by Lemma~\ref{lem:SLNN}~\ref{vec}, $\S{N \ell}{N}$ is isomorphic to the column vector space $\left(\bbz/\ell\bbz\right)^{3}$ over $\Fl$. If we consider the isomorphism $\gamma: \left(\bbz/\ell\bbz\right)^{3} \to \S{N \ell}{N}$ given in \eqref{gamma}, to prove $\cNintersec = \S{N \ell}{N}$, it is enough to show that $\dim_{\Fl} \gamma^{-1}\left(\cNintersec\right) = 3$. Since $\S{N \ell}{N}$ is a normal subgroup of $\SL{N \ell}$, we have a group homomorphism $f: \SL{N \ell} \to \operatorname{Aut} \left(\S{N \ell}{N}\right) \cong \operatorname{GL}_{3}\left(\Fl\right)$ defined by $$
		A \left(\gamma\begin{pmatrix}x\\y\\z\end{pmatrix}\right) A^{-1}
		= \gamma \left(\left(f\left(A\right)\right)\begin{pmatrix}x\\y\\z\end{pmatrix}\right),
	$$ for $A\in \SL{N \ell}$. By Proposition~\ref{prop:tool}~\ref{normal}, the subspace $\gamma^{-1}\left(\cNintersec\right)$ is invariant under the image of $f$. Thus, to prove $\dim_{\Fl} \gamma^{-1}\left(\cNintersec\right) = 3$, by Proposition~\ref{prop:tool}~\ref{cyc}, it is enough to show that there is no $2$-dimensional subspace which is invariant under two matrices $$
		M_{1} := \begin{pmatrix}1&-1&0\\0&1&0\\2&-1&1\end{pmatrix} = f\begin{pmatrix}1&0\\1&1\end{pmatrix}
		\text{ and }
		M_{2} := \begin{pmatrix}1&0&1\\-2&1&-1\\0&0&1\end{pmatrix} = f \begin{pmatrix}1&1\\0&1\end{pmatrix}.
	$$ Then, $\dim_{\Fl} \gamma^{-1}\left(\cNintersec\right) = 3$. We note that the characteristic polynomial of $M_{1}$ is $\left(T-1\right)^{3}$. To proceed the proof, let $x_{1} = \begin{psmallmatrix}0\\0\\1\end{psmallmatrix}$, $x_{2} = \begin{psmallmatrix}1\\0\\1\end{psmallmatrix}$, and $x_{3} = \begin{psmallmatrix}1\\1\\1\end{psmallmatrix}$.

	If $\ell \ne 2$, the minimal polynomial of $M_{1}$ is $\left(T-1\right)^{3}$. Therefore, $M_{1}$ is similar to $\begin{pmatrix}1&1&0\\0&1&1\\0&0&1\end{pmatrix}$, which implies that there is only one $2$-dimensional subspace of $\Fl^{3}$ invariant under $M_{1}$. Indeed, $\left\langle x_{1}, x_{2} \right\rangle$ is the $2$-dimensional invariant subspace under $M_1$. However, this subspace is not invariant under $M_{2}$, since $M_{2}x_{1} = \begin{psmallmatrix}1\\-1\\1\end{psmallmatrix} \notin \left\langle x_{1}, x_{2} \right\rangle$.

	If $\ell=2$, the minimal polynomial of $M_{1}$ is $\left(T-1\right)^{2}$. Therefore, $M_{1}$ is similar to $\begin{pmatrix}1&1&0\\0&1&0\\0&0&1\end{pmatrix}$, which implies that there are exactly two $2$-dimensional subspaces of $\Fl^{3}$ that are invariant under~$M_{1}$. Indeed, $\left\langle x_{1}, x_{2} \right\rangle$ and $\left\langle x_{2}, x_{3} \right\rangle$ are those $2$-dimensional invariant subspaces. However, these spaces are not invariant under $M_{2}$, as $M_{2} x_{2}= \begin{psmallmatrix}0\\1\\1\end{psmallmatrix}$ does not belong to either $\left\langle x_{1}, x_{2} \right\rangle$ or~$\left\langle x_{2}, x_{3} \right\rangle$.
\end{proof}

\subsection{The cases when $\ell \in \left\{2,3\right\}$ does not divide $N$}\label{subsec:2or3}

\subsubsection{The main ingredient upon a discrete valuation for the proof}

For the cases when $\ell \in \left\{2,3\right\}$ and $\ell$ does not divide $N$, the proofs of $\cNintersec = \S{N \ell}{N}$ referring to Figure~\ref{fig:fld_extn} follow a significantly different approach from those for $\ell\geq 5$ or $\ell \mid N$. In these cases, Proposition~\ref{prop:2or3} upon valuations becomes essential, whose proof is built upon Lemma~\ref{lem:SL_comm}, Lemma~\ref{lem:cyc_cubic}, and Lemma~\ref{lem:ind_hyp} presented below.

\begin{lemma}\label{lem:SL_comm}
	Let $\ell$ be a prime and $N$ be a positive integer such that $\ell \nmid N$. Then, the group $\SL{N}$ does not contain a normal subgroup of index $\ell$.
\end{lemma}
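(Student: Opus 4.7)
The plan is to decompose the problem via the Chinese remainder theorem and reduce everything to understanding the abelian quotients of $\SL{q^{k}}$ for each prime power $q^{k} \| N$.

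First, any normal subgroup of index $\ell$ in a group $G$ corresponds to a surjective homomorphism $G \twoheadrightarrow \bbz/\ell\bbz$, since a quotient of prime order is cyclic; such a homomorphism factors through the abelianization $G^{\ab}$. Hence it suffices to show that $\SL{N}^{\ab}$ has order coprime to $\ell$. Writing $N = \prod_{q \mid N} q^{k_{q}}$ and applying the Chinese remainder theorem gives
\[
	\SL{N} \;\cong\; \prod_{q \mid N} \SL{q^{k_{q}}},
\]
and therefore $\SL{N}^{\ab} \cong \prod_{q \mid N} \SL{q^{k_{q}}}^{\ab}$. Since the hypothesis $\ell \nmid N$ forces every prime $q$ in this product to differ from $\ell$, it is enough to prove that $\SL{q^{k}}^{\ab}$ is a $q$-group for every prime $q$ and every $k \geq 1$.

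For this I would use the reduction-mod-$q$ exact sequence
\[
	1 \longrightarrow K \longrightarrow \SL{q^{k}} \longrightarrow \SL{q} \longrightarrow 1,
\]
where $K$ denotes the kernel. A direct order count (namely $|\SL{q^{k}}| = q^{3(k-1)}\,|\SL{q}|$) identifies $K$ as a $q$-group. The image of $K$ in $\SL{q^{k}}^{\ab}$ is then a $q$-group, while the quotient of $\SL{q^{k}}^{\ab}$ by this image is a quotient of $\SL{q}^{\ab}$. By Lemma~\ref{lem:SLl_comm}, $\SL{q}^{\ab}$ is trivial for $q \geq 5$ and cyclic of order $q$ for $q \in \{2,3\}$; in every case $\SL{q}^{\ab}$ is a $q$-group, so $\SL{q^{k}}^{\ab}$ is a $q$-group as well.

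The main obstacle, such as it is, is the little extension argument for $\SL{q^{k}}^{\ab}$, which relies on both facts together: that the kernel of reduction mod $q$ is a $q$-group, and that Lemma~\ref{lem:SLl_comm} pins down the abelianization of $\SL{q}$ as a $q$-group for every prime~$q$. Once this is in place, the CRT decomposition combined with the hypothesis $\ell \nmid N$ forces every prime factor of $|\SL{N}^{\ab}|$ to divide $N$, hence to be distinct from $\ell$, ruling out any abelian quotient of order $\ell$ and therefore any normal subgroup of index $\ell$.
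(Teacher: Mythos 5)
Your proposal is correct and follows essentially the same route as the paper: reduce to prime powers via the Chinese remainder theorem, show $\left(\SL{q^{k}}\right)^{\ab}$ is a $q$-group by an extension argument using reduction mod a smaller power of $q$, and invoke Lemma~\ref{lem:SLl_comm} for the base case $\SL{q}$. The only cosmetic difference is that the paper iterates the abelianization bound one exponent at a time via mod-$q^{e-1}$ reduction (encapsulated in its Lemma~\ref{lem:ab}), whereas you perform the reduction to $\SL{q}$ in a single step using the $q$-power order of the kernel of reduction mod $q$.
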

\begin{proof}
	If $N=1$, it is trivially true. Let $N>1$ and suppose that there is a normal subgroup $\cN$ of index $\ell$ in $\SL{N}$. Since $\ell$ is a prime, $\SL{N}/ \cN$ is abelian and is isomorphic to a quotient group of $\left(\SL{N}\right)^{\ab}$, where $\left(\SL{N}\right)^{\ab}$ denotes the abelianization of $\SL{N}$. Let $N = \prod_{i=1}^{n} q_{i}^{e_{i}}$ be a prime factorization of $N$. Since $\left(\SL{N}\right)^{\ab} \cong \prod_{i=1}^{n} \left(\SL{q_{i}^{e_{i}}}\right)^{\ab}$ by the Chinese remainder theorem and $\ell \ne q_{i}$ for all $i$, it is enough to show that for each $q:=q_{i}$, $\left|\left(\SL{q^{e}}\right)^{\ab}\right| = q^{k}$ for some integer $k\ge 0$. Using mathematical induction, this result follows from Lemma~\ref{lem:ab} below: For $e\ge2$, by considering $\phi$ in Lemma~\ref{lem:ab} as the mod-$q^{e-1}$ reduction map, $\SL{q^{e}} \twoheadrightarrow \SL{q^{e-1}}$, along with Lemma~\ref{lem:SLNN}~\ref{vec}, we deduce that $\abs{\left(\SL{q^{e}}\right)^{\ab}}$ divides $q^{3} \abs{\left(\SL{q^{e-1}}\right)^{\ab}}$. On the other hand, Lemma~\ref{lem:SLl_comm} shows that the order of $\left(\SL{q}\right)^{\ab}$ is either $1$ or $q$. In conclusion, $\left|\left(\SL{q^{e}}\right)^{\ab}\right| = q^{k}$ for some integer $k\ge 0$.	
\end{proof}

\begin{lemma}\label{lem:ab}
		For a surjective group homomorphism $\phi: G \twoheadrightarrow H$, $\abs{G^{\ab}}$ divides $\abs{\ker \phi}\abs{H^{\ab}}$.
	\end{lemma}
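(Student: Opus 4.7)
The plan is to use the isomorphism theorems together with the fact that a surjection of groups induces a surjection on abelianizations. First, I would observe that because $\phi$ is surjective, every commutator in $H$ has the form $\phi([g_1,g_2])$, so $\phi([G,G]) = [H,H]$, and hence $\phi$ descends to a surjective homomorphism
\[
    \bar{\phi}: G^{\ab} = G/[G,G] \twoheadrightarrow H/[H,H] = H^{\ab}.
\]

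Next, I would compute $\ker\bar\phi$. An element $g[G,G] \in G^{\ab}$ lies in $\ker\bar\phi$ iff $\phi(g) \in [H,H] = \phi([G,G])$, iff $g \in (\ker\phi) \cdot [G,G]$. Hence $\ker\bar\phi = (\ker\phi \cdot [G,G])/[G,G]$, and by the second isomorphism theorem
\[
    \ker\bar\phi \;\cong\; \ker\phi \,/\, \bigl(\ker\phi \cap [G,G]\bigr),
\]
which is a quotient of $\ker\phi$.

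Combining, in the finite setting in which the lemma is applied, $|G^{\ab}| = |\ker\bar\phi| \cdot |H^{\ab}|$, and $|\ker\bar\phi|$ divides $|\ker\phi|$, which yields the divisibility $|G^{\ab}| \mid |\ker\phi| \cdot |H^{\ab}|$ as claimed.

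There is essentially no genuine obstacle here: the proof is a routine chase through the isomorphism theorems. The only point requiring a moment of care is the verification that $\phi([G,G]) = [H,H]$, which uses surjectivity of $\phi$ in an essential way; without surjectivity one would only obtain $\phi([G,G]) \subseteq [H,H]$, and the induced map on abelianizations need not be surjective.
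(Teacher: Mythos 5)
Your proof is correct and is essentially the same argument as the paper's: both reduce to the identity $\lvert G^{\ab}\rvert = \lvert H^{\ab}\rvert \cdot [\ker\phi : \ker\phi \cap [G,G]]$, using surjectivity of $\phi$ to identify $\phi([G,G])$ with $[H,H]$ and the second isomorphism theorem to rewrite the relevant index as one inside $\ker\phi$. The paper phrases this via index computations with $\phi^{-1}(\phi(G')) = G'\ker\phi$ rather than via the induced map $\bar\phi$ on abelianizations, but the content is identical.
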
\begin{proof}
		Let $G'$ and $H'$ denote the commutator subgroups of $G$ and $H$, respectively. Since $\phi^{-1}\left(\phi\left(G'\right)\right) = G' \ker\phi$, it follows that $$
			\abs{G^{\ab}} = \left[G:G'\right] = \left[G: \phi^{-1}\left(\phi\left(G'\right)\right) \right] \left[ G' \ker\phi :G'\right].
		$$ Moreover, the surjectivity of $\phi$ implies that the induced group homomorphism $G/\phi^{-1}\left(\phi\left(G'\right)\right) \to H/\phi\left(G'\right) = H/H'$ is an isomorphism. Hence, we have: $$
			\abs{G^{\ab}} = \left[G: \phi^{-1}\left(\phi\left(G'\right)\right) \right] \left[ G' \ker\phi :G'\right] = \left[H: H' \right] \left[ \ker\phi : \ker\phi \cap G'\right] ~~\bigg|~~ \abs{H^{\ab}} \abs{\ker \phi}.
	$$
	\end{proof}

\begin{lemma}\label{lem:cyc_cubic} 
	Let $\ell \in\{2,3\}$ and $F$ be a field whose characteristic is different from $\ell$. If $\ell = 3$, we assume that $F$ contains a primitive third root of unity $\omega$. For non-zero $\beta_{1}, \beta_{2} \in F$, if $F\left(\sqrt[\ell]{\beta_{1}}\right) = F\left(\sqrt[\ell]{\beta_{2}}\right)$, then there exists an $\epsilon \in \left\{\pm1\right\}$ such that $\sqrt[\ell]{\beta_{1}^{\epsilon}\beta_{2}} \in F$.
\end{lemma}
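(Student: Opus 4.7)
The plan is to invoke Kummer theory, which applies in both cases because $F$ contains a primitive $\ell$-th root of unity (namely $-1$ for $\ell = 2$, and $\omega$ for $\ell = 3$ by hypothesis). Under the Kummer correspondence, the radical extension $F\left(\sqrt[\ell]{\beta}\right)$ depends only on the class of $\beta$ in $F^{\times}/F^{\times \ell}$, and two such extensions coincide if and only if the classes $[\beta_1]$ and $[\beta_2]$ generate the same cyclic subgroup of $F^{\times}/F^{\times \ell}$. The desired conclusion $\sqrt[\ell]{\beta_1^{\epsilon}\beta_2} \in F$ is exactly $\beta_2 \equiv \beta_1^{-\epsilon} \pmod{F^{\times \ell}}$ for some $\epsilon \in \{\pm 1\}$, and this matches the two nontrivial generators of a cyclic group of order $\ell$ (which collapse to a single generator when $\ell = 2$).

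More concretely, I would first dispose of the degenerate case: if $\beta_1 \in F^{\times \ell}$, then $F\left(\sqrt[\ell]{\beta_1}\right) = F$, forcing $\beta_2 \in F^{\times \ell}$ as well, and either choice of $\epsilon$ works. Otherwise, the polynomial $X^{\ell} - \beta_1$ is irreducible over $F$ (for $\ell$ prime and $\beta_1 \notin F^{\times \ell}$), so $\left[F\left(\sqrt[\ell]{\beta_1}\right):F\right] = \ell$, and the extension is cyclic Galois with a generator $\sigma$ satisfying $\sigma\left(\sqrt[\ell]{\beta_1}\right) = \zeta \sqrt[\ell]{\beta_1}$ for a primitive $\ell$-th root of unity $\zeta \in F$. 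Since $\sqrt[\ell]{\beta_2} \in F\left(\sqrt[\ell]{\beta_1}\right)$ and $\sigma\left(\sqrt[\ell]{\beta_2}\right)^{\ell} = \beta_2$, we have $\sigma\left(\sqrt[\ell]{\beta_2}\right) = \zeta^{k}\sqrt[\ell]{\beta_2}$ for some $k \in \{0,1,\ldots,\ell-1\}$. A direct computation shows the ratio $\sqrt[\ell]{\beta_2}/\left(\sqrt[\ell]{\beta_1}\right)^{k}$ is $\sigma$-fixed, hence lies in $F$, which yields $\beta_2/\beta_1^{k} \in F^{\times \ell}$.

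Finally, I would read off the sign $\epsilon$ case by case. For $\ell = 2$ the only nontrivial value is $k = 1$, giving $\beta_1^{-1}\beta_2 \in F^{\times 2}$, so $\epsilon = -1$. For $\ell = 3$: the case $k = 1$ gives $\beta_1^{-1}\beta_2 \in F^{\times 3}$ (so $\epsilon = -1$); the case $k = 2$ gives $\beta_2 \equiv \beta_1^{2} \pmod{F^{\times 3}}$, and multiplying by $\beta_1$ yields $\beta_1\beta_2 \equiv \beta_1^{3} \equiv 1 \pmod{F^{\times 3}}$, so $\epsilon = 1$ works. I do not anticipate a serious obstacle; the whole proof is a textbook Kummer-theoretic computation, and the ambiguity $\epsilon = \pm 1$ in the conclusion is precisely the reflection of the two possible choices of generator of the order-$\ell$ cyclic subgroup $\langle [\beta_1] \rangle \le F^{\times}/F^{\times \ell}$.
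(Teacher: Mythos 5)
Your proof is correct and, for $\ell = 3$, follows essentially the same route as the paper: the extension is cyclic of degree $3$, a generator $\sigma$ scales $\sqrt[3]{\beta_i}$ by $\omega^{\epsilon_i}$ with $\epsilon_i \in \{\pm 1\}$, and the appropriate $\epsilon$ is read off so that $\sqrt[3]{\beta_1}^{\epsilon}\sqrt[3]{\beta_2}$ is $\sigma$-fixed. The only divergence is for $\ell = 2$: you run the same Kummer-theoretic argument uniformly, whereas the paper simply writes $\sqrt{\beta_2} = a + b\sqrt{\beta_1}$, squares, and concludes $a = 0$. Your unified treatment is a bit cleaner at the cost of slightly heavier machinery; the paper's is more elementary. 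One small point worth making explicit is why $k \neq 0$ in the nondegenerate case: $k = 0$ would force $\sqrt[\ell]{\beta_2} \in F$, contradicting $F\left(\sqrt[\ell]{\beta_2}\right) \neq F$. You flag $k=0$ as the trivial value, which is fine, but the exclusion deserves a sentence since otherwise the claimed membership $\beta_2/\beta_1^{k} \in F^{\times \ell}$ with $k=0$ would not produce a valid $\epsilon \in \{\pm 1\}$.
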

\begin{proof}
	Let $F' := F\left(\sqrt[\ell]{\beta_{1}}\right) = F\left(\sqrt[\ell]{\beta_{2}}\right)$.
	First, let $\ell =2$. If $F' = F$, then both $\beta_{1}$ and $\beta_{2}$ are squares in $F$, and thus, $\beta_{1}\beta_{2}$ is also a square in $F$. Next, if $F'\ne F$, we have $a,b \in F$ such that $\sqrt{\beta_{2}} = a + b\sqrt{\beta_{1}}$. This gives the equation, $$
		\left(a^{2}+\beta_{1}b^{2}\right) + 2ab\sqrt{\beta_{1}}= \beta_{2} \in F.
	$$ For this to hold, we must have $ab=0$, since $\operatorname{char}\left(F\right)\neq \ell=2$. If $b=0$, then $\beta_{2} = a^{2}$, so $\sqrt{\beta_{2}} \in F$, which contradicts that $F\left(\sqrt{\beta_{2}}\right) \ne F$. Therefore, $a= 0$ and $\sqrt{\beta_{1}\beta_{2}} = b\beta_{1} \in F$.

	Next, let $\ell=3$. If $F' = F$, then both $\beta_{1}$ and $\beta_{2}$ are cubes in $F$, and thus, $\beta_{1}\beta_{2}$ is also a cube in $F$. If $F' \ne F$, then the extension $F'$ over $F$ is cyclic of order $3$, since $\omega \in F$. For a generator $\sigma \in \Gal\left(F'/F\right)$, we have $\epsilon_{1}, \epsilon_{2} \in \left\{\pm1\right\}$ such that $\sqrt[3]{\beta_{1}}^{\sigma} = \sqrt[3]{\beta_{1}} \omega^{\epsilon_{1}}$ and $\sqrt[3]{\beta_{2}}^{\sigma} = \sqrt[3]{\beta_{2}} \omega^{\epsilon_{2}}$. Let $\epsilon := -\epsilon_{2}/\epsilon_{1}$. Then, $\sigma$ fixes $\sqrt[3]{\beta_{1}}^{\epsilon}\sqrt[3]{\beta_{2}}$, which implies that $\beta_{1}^{\epsilon}\beta_{2}$ is a cube in $F$.
\end{proof}

\begin{lemma}\label{lem:ind_hyp}
	Let $m$ and $n$ be two relatively prime positive integers such that $p\nmid mn$. Then,
	\begin{enumerate}[\normalfont (a)]
		\item $L_{m} \cap L_{n} = L_{1}$ and
		\item if $G_{e, \cB_{n}} = \peu \times \SL{n}$ for a basis $\cB_{n}$ of $E_{s,t}\left[n\right]$, then $L_{n} \cap L_{p^{e}} = L_{1}$.
	\end{enumerate} 
\end{lemma}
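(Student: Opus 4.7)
The plan is to deduce both parts of Lemma~\ref{lem:ind_hyp} from a direct comparison of field degrees, using Lemma~\ref{lem:extn_deg} as the main machinery and the Chinese Remainder Theorem (CRT) to identify the relevant composita.

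For part (a), I would first record the two structural identities that follow from the coprimality of $m$ and $n$. On the elliptic-curve side, using Bezout's identity one obtains $E_{s,t}[mn] = E_{s,t}[m] + E_{s,t}[n]$, hence on the field side $L_{mn} = L_{m}L_{n}$. On the group side, CRT yields $\SL{mn} \cong \SL{m} \times \SL{n}$, so $|\SL{mn}| = |\SL{m}|\,|\SL{n}|$. Theorem~\ref{thm:fact:N} gives $[L_{m}:L_{1}] = |\SL{m}|$, $[L_{n}:L_{1}] = |\SL{n}|$ and $[L_{mn}:L_{1}] = |\SL{mn}|$. Applying the first identity of Lemma~\ref{lem:extn_deg} with $F_{0}=L_{1}$, $F_{1}=L_{m}$ (Galois over $L_{1}$) and $F_{2}=L_{n}$ gives
\[
    |\SL{m}|\,|\SL{n}| \;=\; [L_{m}L_{n}:L_{1}] \;=\; \frac{|\SL{m}|\,|\SL{n}|}{[L_{m}\cap L_{n}:L_{1}]},
\]
which forces $[L_{m}\cap L_{n}:L_{1}]=1$ and hence $L_{m}\cap L_{n}=L_{1}$.

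For part (b), I would run the same argument but with separable degrees, since $L_{p^{e}}/L_{1}$ is normal but no longer separable. First, $E_{s,t}[p^{e}n] = E_{s,t}[p^{e}] \oplus E_{s,t}[n]$ (one summand is $p$-primary and the other is of order prime to $p$), so $L_{p^{e}n} = L_{n}L_{p^{e}}$. Next, since $\fkA(F'/F)$ has order equal to the separable degree $[F':F]_{\sep}$ for any finite normal extension $F'/F$, Theorem~\ref{thm:fact:pe} gives $[L_{p^{e}}:L_{1}]_{\sep}=|\peu|$, Theorem~\ref{thm:fact:N} gives $[L_{n}:L_{1}]=|\SL{n}|$ (already separable since $p\nmid n$), and the hypothesis $G_{e,\cB_{n}}=\peu\times\SL{n}$ gives $[L_{p^{e}n}:L_{1}]_{\sep}=|\peu|\,|\SL{n}|$. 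Now apply the separable version of Lemma~\ref{lem:extn_deg} with $F_{1}=L_{n}$ Galois over $L_{1}$ and $F_{2}=L_{p^{e}}$:
\[
    |\peu|\,|\SL{n}| \;=\; [L_{n}L_{p^{e}}:L_{1}]_{\sep} \;=\; \frac{[L_{n}:L_{1}]\,[L_{p^{e}}:L_{1}]_{\sep}}{[L_{n}\cap L_{p^{e}}:L_{1}]} \;=\; \frac{|\SL{n}|\,|\peu|}{[L_{n}\cap L_{p^{e}}:L_{1}]}.
\]
This forces $[L_{n}\cap L_{p^{e}}:L_{1}]=1$, so $L_{n}\cap L_{p^{e}}=L_{1}$.

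There is essentially no main obstacle beyond bookkeeping: the substantive input is already packaged in Theorem~\ref{thm:fact:pe}, Theorem~\ref{thm:fact:N}, and the hypothesis on $G_{e,\cB_{n}}$, while the CRT decompositions of both the torsion group $E_{s,t}[mn]$ and of $\SL{mn}$ make the degrees on the two sides match exactly. The only mild subtlety is remembering to work with separable degrees in part (b), which is handled by the separable identity in Lemma~\ref{lem:extn_deg} together with the observation that $L_{n}\cap L_{p^{e}}$ lies in the separable extension $L_{n}/L_{1}$, so its ordinary and separable degrees over $L_{1}$ coincide.
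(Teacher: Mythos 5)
Your proof is correct and takes essentially the same route as the paper: identify $L_{mn}=L_m L_n$ (resp.\ $L_{p^en}=L_n L_{p^e}$), apply Lemma~\ref{lem:extn_deg} in its ordinary (resp.\ separable) form, and compare degrees via Theorem~\ref{thm:fact:N}, Theorem~\ref{thm:fact:pe}, and the hypothesis on $G_{e,\cB_n}$. The only difference is that you spell out the CRT justifications for $L_{mn}=L_m L_n$ and $|\SL{mn}|=|\SL{m}|\,|\SL{n}|$, which the paper leaves implicit.
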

\begin{proof}
	(a) Since $L_{mn} = L_{m}L_{n}$, Lemma~\ref{lem:extn_deg} and Theorem~\ref{thm:fact:N} imply that $$
		\left|\SL{mn}\right|
		= \left[L_{mn}: L_{1}\right]
		= \frac{\left[L_{m}: L_{1}\right] \left[L_{n}: L_{1}\right] }{\left[L_{m} \cap L_{n}: L_{1}\right]} 
		= \frac{\left|\SL{m}\right| \left|\SL{n}\right| }{\left[L_{m} \cap L_{n}: L_{1}\right]}
	$$ and $L_{m} \cap L_{n} = L_{1}$.

	(b) If $G_{e, \cB_{n}} = \peu \times \SL{n}$, Lemma~\ref{lem:extn_deg}, Theorem~\ref{thm:fact:pe}, and Theorem~\ref{thm:fact:N} imply that $$
		\left| \peu \times \SL{n}\right|
		= \left[L_{n p^{e}}: L_{1}\right]_{\sep}
		= \frac{\left[L_{n}: L_{1}\right] \left[L_{p^{e}}: L_{1}\right]_{\sep} }{\left[L_{n} \cap L_{p^{e}}: L_{1}\right]} 
		= \frac{\left|\SL{n}\right| \left|\peu\right| }{\left[L_{n} \cap L_{p^{e}}: L_{1}\right]}
	$$ and $L_{n} \cap L_{p^{e}} = L_{1}$.
\end{proof}

We are ready to prove Proposition~\ref{prop:2or3}, which is the main tool for proving $\cNintersec = \S{N \ell}{N}$ referring to Figure~\ref{fig:fld_extn} for the cases when $\ell \in \left\{2,3\right\}$ does not divide $N$.

\begin{prop}\label{prop:2or3} 
	Let $\ell \in \left\{2,3\right\}$ and $N$ be a positive integer such that $p\nmid \ell N$ and $\ell \nmid N$. Assume that $G_{e, \cB_{N}} = \peu \times \SL{N}$ for a basis $\cB_{N}$ of $E_{s,t}\left[N\right]$. If \begin{enumerate}[\normalfont (a)]
		\item \label{easy} $\ell \nmid p-1$, or
		\item \label{goal_2or3} there exist a discrete valuation $v$ on $L_{1}$ and $\lambda, \mu \in L_{1}$ such that $\sqrt[\ell]{\lambda} \in L_{\ell}$, $\sqrt[\ell]{\mu} \in L_{p}$, $\sqrt[\ell]{\lambda} \notin L_{1}$, and $\ell$ divides $v\left(\lambda\right)$ but does not divide $v\left(\mu\right)$,
	\end{enumerate} then $\cNintersec = \S{N \ell}{N}$ referring to Figure~\ref{fig:fld_extn}.
\end{prop}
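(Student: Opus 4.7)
The plan is to argue by contradiction. Assume $\cNintersec$ is a proper subgroup of $\S{N\ell}{N}$. Since $\ell\nmid N$, Lemma~\ref{lem:SLNN}\ref{SL} identifies $\S{N\ell}{N}$ with $\SL{\ell}$, and Proposition~\ref{prop:tool}\ref{cyc} says $\S{N\ell}{N}/\cNintersec$ is cyclic; by Lemma~\ref{lem:SLl_comm}\ref{SLl_comm_small_ell}, the only nontrivial cyclic quotient of $\SL{\ell}$ for $\ell\in\{2,3\}$ has order $\ell$, so $[\Lint:L_N]=\ell$ and $\Lint/L_N$ is cyclic of degree $\ell$.

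Under hypothesis~\ref{easy}, the assumption $G_{e,\cB_{N}}=\peu\times\SL{N}$ identifies $\fkA(L_{p^{e} N}/L_{N})$ with $\peu$, whose order $p^{e-1}(p-1)$ is coprime to $\ell$ since $\ell\neq p$ and $\ell\nmid p-1$. Hence no degree-$\ell$ separable subextension of $L_{p^{e} N}/L_{N}$ exists, contradicting $\Lint\subseteq L_{p^{e} N}$.

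Under hypothesis~\ref{goal_2or3}, I would realize $\Lint$ as a common Kummer extension of $L_N$ from each side. First, I check that $L_N(\sqrt[\ell]{\lambda})/L_N$ and $L_N(\sqrt[\ell]{\mu})/L_N$ are proper degree-$\ell$ extensions contained respectively in $L_{N\ell}$ and $L_{p^{e} N}$: for $\lambda$, Lemma~\ref{lem:ind_hyp}(a) gives $L_N\cap L_\ell=L_1$, which together with $\sqrt[\ell]{\lambda}\notin L_1$ yields $\sqrt[\ell]{\lambda}\notin L_N$; for $\mu$, the hypothesis $\ell\nmid v(\mu)$ first forces $\sqrt[\ell]{\mu}\notin L_1$, and then Lemma~\ref{lem:ind_hyp}(b) (which uses the standing assumption $G_{e,\cB_{N}}=\peu\times\SL{N}$) upgrades this to $\sqrt[\ell]{\mu}\notin L_N$. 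Since $\S{N\ell}{N}\cong\SL{\ell}$ has a unique normal subgroup of index $\ell$ (Lemma~\ref{lem:SLl_comm}\ref{SLl_comm_small_ell}), and the cyclic group $\peu$ has a unique index-$\ell$ subgroup (the containment $\sqrt[\ell]{\mu}\in L_p\setminus L_1$ already ensures $\ell\mid p-1$), the extensions $L_{N\ell}/L_N$ and $L_{p^{e} N}/L_N$ each admit a \emph{unique} degree-$\ell$ subextension, forcing $\Lint=L_N(\sqrt[\ell]{\lambda})=L_N(\sqrt[\ell]{\mu})$. Applying Lemma~\ref{lem:cyc_cubic} with $F=L_N$ (its hypotheses hold since $\mathrm{char}(L_N)=p\geq 5$ and $\bbk\supseteq\Fp^{\alg}$ supplies the primitive cube root of unity needed when $\ell=3$) then produces $\epsilon\in\{\pm 1\}$ with $\sqrt[\ell]{\lambda^\epsilon \mu}\in L_N$.

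The discrete valuation delivers the final blow. If $\sqrt[\ell]{\lambda^\epsilon \mu}$ already lay in $L_1$, then $\ell\mid v(\lambda^\epsilon \mu)=\epsilon v(\lambda)+v(\mu)$, which combined with $\ell\mid v(\lambda)$ would force $\ell\mid v(\mu)$, contradicting the hypothesis. Therefore $L_1(\sqrt[\ell]{\lambda^\epsilon \mu})/L_1$ is a genuine cyclic extension of degree $\ell$ sitting inside $L_N$, producing an order-$\ell$ quotient of $\Gal(L_N/L_1)\cong\SL{N}$ that directly contradicts Lemma~\ref{lem:SL_comm} because $\ell\nmid N$. The main obstacle I anticipate is not a single hard computation but rather the bookkeeping around the two uniqueness arguments that pin $\Lint$ down as the common Kummer extension $L_N(\sqrt[\ell]{\lambda})=L_N(\sqrt[\ell]{\mu})$; interestingly, the valuation hypothesis itself plays only a minimal, terminal role, serving purely to prevent $\lambda^\epsilon \mu$ from collapsing to an $\ell$-th power in $L_1$.
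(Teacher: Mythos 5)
Your proof is correct and follows essentially the same route as the paper's: argue by contradiction, show that $L_N(\sqrt[\ell]{\lambda}) = \Lint = L_N(\sqrt[\ell]{\mu})$ via the uniqueness of the degree-$\ell$ (normal) subextension on each side, invoke Lemma~\ref{lem:cyc_cubic} to produce $\sqrt[\ell]{\lambda^\epsilon\mu}\in L_N$, use the valuation to rule out $\sqrt[\ell]{\lambda^\epsilon\mu}\in L_1$, and contradict Lemma~\ref{lem:SL_comm}. Two small points worth tightening: (1) when you conclude from $\sqrt[\ell]{\lambda}\notin L_N$ that $L_N(\sqrt[\ell]{\lambda})/L_N$ has degree exactly $\ell$, you should note (as the paper does via the Galois-theoretic degree computation $[L_N(\sqrt[\ell]{\lambda}):L_N]=[L_1(\sqrt[\ell]{\lambda}):L_1(\sqrt[\ell]{\lambda})\cap L_N]=\ell$, or equivalently via Kummer theory using $\zeta_\ell\in L_N$) that the degree divides $\ell$, since for $\ell=3$ mere non-membership only gives degree $\ge 2$; (2) when you assert $L_{p^e N}/L_N$ has a unique degree-$\ell$ subextension because $\peu$ is cyclic, you are implicitly passing to the separable closure of $L_N$ in $L_{p^e N}$ (the normal extension $L_{p^e N}/L_N$ is not Galois in general), which is fine because $\Lint$ and $L_N(\sqrt[\ell]{\mu})$ are both separable over $L_N$, but it deserves a word since the paper flags this by noting $\ell$ does not divide the inseparable degree. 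Neither is a genuine gap; the overall argument is sound and matches the paper's.
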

\begin{proof}
	Assume (a) is satisfied. Then, if $\ell=2$, then $\ell\mid p-1$, so $\ell=3$. By the assumption that $G_{e, \cB_{N}} = \peu \times \SL{N}$, the separable degree of $L_{p^{e} N} / L_{N}$ is $\left|\left(\bbz/p^{e}\bbz\right)^{\times}\right| = \left(p-1\right)p^{e-1}$, and hence, $[L_{p^{e} N}:L_{N}]$ is not divisible by $\ell =3$. On the other hand, by Proposition~\ref{prop:tool}~\ref{cyc}, Lemma~\ref{lem:SLNN}~\ref{SL}, and Lemma~\ref{lem:SLl_comm}~\ref{SLl_comm_small_ell}, the index of $\cNintersec$ in $\S{N \ell}{N}$ is $1$ or $\ell$. Since this index divides $[L_{p^{e} N}:L_{N}]$ which is not divisible by $\ell$, it follows that $\cNintersec = \S{N \ell}{N}$.
	
	Next, assume that \ref{goal_2or3} holds and $\ell \mid p-1$. We suppose $\cNintersec \ne \S{N \ell}{N}$ and show that \begin{equation}\label{eqn:trinity}
		L_{N}\left(\sqrt[\ell]{\lambda}\right) = \Lint = L_{N}\left(\sqrt[\ell]{\mu}\right).
	\end{equation}
	Then, $\sqrt[\ell]{\lambda^{\epsilon}\mu} \in L_{N}$ for some $\epsilon \in \left\{\pm1\right\}$ by Lemma~\ref{lem:cyc_cubic}. Since $\ell \mid v\left(\lambda\right)$ and $\ell \nmid v\left(\mu\right)$, we have that $\sqrt[\ell]{\lambda^{\epsilon}\mu} \notin L_{1}$. Therefore, $L_{1}\left(\sqrt[\ell]{\lambda^{\epsilon}\mu}\right)$ is a normal extension of $L_{1}$ of degree $\ell$ in $L_{N}$, since $L_{N} \supseteq \Fp^{\alg}$ contains a primitive $\ell$th root of unity. The normal extension corresponds to a normal subgroup of $r_{\cB_{N}}\left(L_{N}/L_{1}\right) = \SL{N}$ with index $\ell$. This contradicts Lemma~\ref{lem:SL_comm} since $\ell \nmid N$.

	To show \eqref{eqn:trinity}, we prove that 
	\begin{enumerate}[label=(\roman*)]
		\item the extensions $L_{N}\left(\sqrt[\ell]{\lambda}\right)$, $\Lint$, and $L_{N}\left(\sqrt[\ell]{\mu}\right)$ are normal extensions of $L_{N}$ of degree $\ell$,
		\item there exists a unique normal extension of $L_{N}$ of degree $\ell$ in $L_{N \ell}$, and
		\item there exists a unique normal extension of $L_{N}$ of degree $\ell$ in $L_{p^{e}N}$.
	\end{enumerate}
	We note that (i) and (ii) imply $L_{N}\left(\sqrt[\ell]{\lambda}\right) = \Lint$ since they are intermediate fields between $L_{N}$ and $L_{N \ell}$. Similarly, (i) and (iii) imply $L_{N}\left(\sqrt[\ell]{\mu}\right) = \Lint$.

	First, we show (i). By Lemma~\ref{lem:ind_hyp}, we conclude that $L_{\ell} \cap L_{N} = L_{1}$. Since $L_{N}$ is a Galois extension of~$L_{1}$ and $$L_{1} \subseteq L_{N}~\cap~L_{1}\left(\sqrt[\ell]{\lambda}\right) \subseteq L_{N}~\cap~L_{\ell} = L_{1},$$ we have that \begin{align*}
		\left[L_{N}\left(\sqrt[\ell]{\lambda}\right):L_{N}\right]
		 = \left[L_{1}\left(\sqrt[\ell]{\lambda}\right) : L_{1}\left(\sqrt[\ell]{\lambda}\right) \cap L_{N} \right] = \left[L_{1}\left(\sqrt[\ell]{\lambda}\right) : L_{1} \right]= \ell.
	\end{align*} 
	By the assumption that $\ell \nmid v\left(\mu\right)$, we have $\sqrt[\ell]{\mu} \notin L_{1}$. Similarly, we get that $\left[L_{N}\left(\sqrt[\ell]{\mu}\right):L_{N}\right] = \ell$. Since $L_{N} $ contains $ \Fp^{\alg}$, $L_{N}$ contains a primitive $\ell$th root of unity, so both $L_{N}\left(\sqrt[\ell]{\lambda}\right)$ and $L_{N}\left(\sqrt[\ell]{\mu}\right)$ are normal extensions over $L_{N}$ of degree $\ell$. Since $\cNintersec \ne \S{N \ell}{N}$, by Proposition~\ref{prop:tool}~\ref{cyc}, Lemma~\ref{lem:SLNN}~\ref{SL}, and Lemma~\ref{lem:SLl_comm}~\ref{SLl_comm_small_ell}, we see that $\cNintersec$ is a normal subgroup of $\S{N \ell}{N}$ with index~$\ell$. In other words, the field $\Lint$ is a normal extension of $L_{N}$ of degree~$\ell$.

	The part (ii) follows from Proposition~\ref{prop:tool}~\ref{cyc}, Lemma~\ref{lem:SLNN}~\ref{SL}, and Lemma~\ref{lem:SLl_comm}~\ref{SLl_comm_small_ell}.
	
	The part (iii) holds since $\fkA\left(L_{p^{e} N}/L_{N}\right)$ is cyclic and $\ell$ does not divide the inseparable degree of $L_{p^{e} N} $ over $L_{N}$.
\end{proof}

Our next objective is to establish the existence of $v$, $\lambda$, and $\mu$ as described in Proposition~\ref{prop:2or3}~\ref{goal_2or3}. To achieve this, we compute the valuations of certain elements in $L_{1}$, as detailed in the following sections.

\subsubsection{Basic properties of valuations}\label{subsubsec:val}

Throughout this section, let $F$ be a field with a valuation~$v$ which is not necessarily to be discrete. In other words, $v$ is a group homomorphism from $F^{\times}$ to a totally ordered additive group $\left(\Gamma,\le\right)$ such that $$
	v\left(a+b\right) \ge \min \left\{v\left(a\right), v\left(b\right)\right\} \text{ for all } a,b \in F^{\times} \text{ with  }a+b \ne 0.
$$ We extend $v$ from $F$ to $\Gamma \cup \{\infty\}$ for an element $\infty\not\in \Gamma$, with the relation $\infty > g$ for all $g \in \Gamma$, by defining $v\left(0\right) = \infty$. For all $a,b \in F$, one may verify the following useful equality: \begin{equation}\label{eqn:valuation}
	v\left(a+b\right) = \min \left\{v\left(a\right), v\left(b\right)\right\}, \text{ if } v\left(a\right) \ne v\left(b\right).
\end{equation}

\begin{lemma}\label{lem:tors-free}
	A given codomain $\Gamma$ of $v$ in the above is torsion-free.
\end{lemma}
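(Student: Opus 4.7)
The plan is to prove the slightly more general (and standard) statement that \emph{any} totally ordered abelian group is torsion-free; the lemma then follows immediately from the hypothesis that the codomain $\Gamma$ of $v$ is such a group. Note that the valuation $v$ itself plays no role in the argument: the conclusion depends only on the ordered-group structure of $\Gamma$.

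The argument proceeds as follows. Fix a nonzero element $g \in \Gamma$. By totality of the order, either $g > 0$ or $g < 0$; after replacing $g$ by $-g$ if necessary (note that $g < 0$ implies $-g > 0$ by translation-invariance of the order), I may assume $g > 0$. The key property I will use is that the order on $\Gamma$ is compatible with addition, i.e.\ $a < b$ implies $a + c < b + c$ for all $a,b,c \in \Gamma$. Applied to $0 < g$ with $c = g$, this yields $g < 2g$, hence $0 < g < 2g$. A straightforward induction on $n$ then gives
$$
0 < g < 2g < \cdots < ng
$$
for every positive integer $n$. In particular $ng \neq 0$ for all $n \geq 1$, so $g$ is not a torsion element of $\Gamma$.

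Since $g$ was an arbitrary nonzero element of $\Gamma$, this shows $\Gamma$ has no nontrivial torsion, i.e.\ $\Gamma$ is torsion-free. No step in the argument appears to be a genuine obstacle — the only subtlety is being explicit about using translation-invariance of the order (which is built into the definition of a totally ordered abelian group) to push the strict inequality $0 < g$ through to $0 < ng$.
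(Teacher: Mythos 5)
Your proof is correct and uses essentially the same argument as the paper: translation-invariance of the total order gives $ng > 0$ whenever $g > 0$ and $n \geq 1$, hence $ng \neq 0$. The paper phrases it as a short contradiction (if $g$ has finite order $n$, then $0 = ng > 0$), while you unwind the same fact via an explicit induction; the underlying idea is identical.
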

\begin{proof}
	We show that if $g \in \Gamma$ has finite order $n$, then $g=0$. If $g>0$, we have a contradiction that $0 = ng>0$. Similarly, $g<0$ leads to a contradiction. Hence, $g=0$.
\end{proof}

Lemma~\ref{lem:tors-free} allows us to extend the codomain $\Gamma$ of $v$ given above to a $\bbq$-vector space $\bbq \otimes_{\bbz} \Gamma$. More precisely, the lemma asserts that the natural group homomorphism $\Gamma \to \bbq \otimes_{\bbz} \Gamma$ defined by $g \mapsto 1 \otimes_{\bbz} g$ is injective, so we can consider $\Gamma$ as a subgroup of $\bbq \otimes_{\bbz} \Gamma$. Moreover, the total order $\le$ on $\Gamma$ extends naturally to $\bbq \otimes_{\bbz} \Gamma$. In fact, if we denote each element $a \otimes_{\bbz} g \in \bbq \otimes_{\bbz} \Gamma$ simply by $ag$, then every element in $\bbq \otimes_{\bbz} \Gamma$ can be  written in the form $\frac{g}{n}$ for some $g \in \Gamma$ and some positive $n \in \bbz$. We define $\frac{g}{n} \le \frac{g'}{n'}$ if and only if $n' g \le n g'$ in $\Gamma$. 

For simplicity, we continue to denote the extended codomain $\bbq \otimes_{\bbz} \Gamma$ of $v$ by the same notation~$\Gamma$.

\

Throughout this paper, we treat an integer $k$ as an element of a field $F$ when necessary, defining $k := \begin{cases} \sum_{i=1}^{k} 1_{F}, &\text{ if } k> 0,\\
0_{F}, &\text{ if } k=0,\\
-\sum_{i=1}^{-k} 1_{F}, &\text{ if } k<0,
\end{cases}$ where $0_{F}$ and $1_{F}$ are the additive and multiplicative identities of $F$, respectively.
\begin{lemma}\label{lem:val}
	Let $q$ denote the characteristic of $F$ and $c\in F-\{0\}$. If $q=0$, we have $v\left(c\right) \ge 0$ if $c$ is integral over $\bbz$, and if $q>0$, we have $v\left(c\right) = 0$ if $c \in \bbf_{q}^{\alg}$.
\end{lemma}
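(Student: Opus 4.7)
The plan is to treat the two characteristic cases separately, both ultimately reducing to Lemma~\ref{lem:tors-free}.

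For $q>0$, the argument is immediate. Any $c \in \bbf_{q}^{\alg}$ lies in a finite subfield $\bbf_{q^{m}}$, so $c^{q^{m}-1} = 1$. Applying $v$ gives $(q^{m}-1) v(c) = v(1) = 0$, exhibiting $v(c)$ as a torsion element of $\Gamma$. By Lemma~\ref{lem:tors-free}, $v(c) = 0$.

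For $q=0$, I would first establish the auxiliary fact that $v(k) \ge 0$ for every $k \in \bbz$. One has $v(1)=0$ automatically, and $v(-1) = 0$ follows from $2v(-1) = v(1) = 0$ together with torsion-freeness. For a positive integer $k$, a straightforward induction using $v(k+1) \ge \min\{v(k), v(1)\} = \min\{v(k), 0\}$ shows $v(k) \ge 0$, and for negative integers we use $v(-k) = v(-1) + v(k) = v(k)$. With this in hand, suppose $c$ satisfies a monic relation $c^{n} + a_{n-1}c^{n-1} + \cdots + a_{0} = 0$ with $a_{i} \in \bbz$, and assume for contradiction that $v(c) < 0$. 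Then for each $i < n$ we have $v(a_{i} c^{i}) \ge v(a_{i}) + i\,v(c) \ge i\,v(c) \ge (n-1) v(c) > n\, v(c) = v(c^{n})$, where the strict inequality uses $v(c) < 0$. Applying the ultrametric inequality to the identity $c^{n} = -\sum_{i<n} a_{i} c^{i}$ produces $v(c^{n}) \ge \min_{i<n} v(a_{i} c^{i}) > v(c^{n})$, which is absurd. Hence $v(c) \ge 0$.

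I anticipate no serious obstacle; both cases rest only on the ultrametric inequality combined with Lemma~\ref{lem:tors-free}. The only mild subtlety is the preliminary reduction showing $v(k) \ge 0$ for integer $k$, which must be put in place before the ultrametric comparison of terms in the minimal polynomial relation can be carried out.
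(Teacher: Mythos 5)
Your proof is correct and follows essentially the same line of reasoning as the paper: in positive characteristic you use that $c$ has finite multiplicative order together with Lemma~\ref{lem:tors-free}, and in characteristic zero you first establish $v(k)\ge 0$ for integers and then run a dominant-term contradiction on the monic integral relation. The only cosmetic difference is that the paper applies the equality case~\eqref{eqn:valuation} of the ultrametric to the full sum and compares against $v(0)=\infty$, whereas you isolate $c^{n}$ and compare against the minimum of the remaining terms; the underlying idea is identical.
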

\begin{proof}
	If $q=0$, then $v\left(a\right)\geq 0$ for any integer $a$. Indeed, if $a>0$, we have $$v\left(a\right) \ge v\left(\sum_{k=1}^{a}1\right) \ge v\left(1\right) = v\left(1/1\right) = 0.$$ If $a<0$, then $v\left(a\right)= v\left(-a\right) \ge 0$. Lastly, for $a=0$, we have $v\left(0\right)=\infty \ge 0$.
	
	If $c$ is integral over $\bbz$, then $c$ satisfies a monic polynomial $x^{d} + \sum_{i=0}^{d-1}a_{i}x^{d-i}$ with coefficients $a_{i} \in \bbz$. If $v\left(c\right) <0$, then \eqref{eqn:valuation} implies that $\infty = v\left(0\right) = v\left(c^{d} + \sum_{i=0}^{d-1}a_{i}c^{d-i}\right) = d v\left(c\right) < 0$, which is a contradiction. 
	
	Next, suppose $q>0$. If $c \in \bbf_{q}^{\alg}$, then  $c^{d} = 1$ for some positive integer $d$, so $dv\left(c\right) = v\left(1\right) = 0$. By Lemma~\ref{lem:tors-free}, it follows that $v\left(c\right) = 0$.
\end{proof}

\begin{lemma}\label{lem:disc}
	Let $f$ be a monic polynomial $f\left(x\right) = x^{n} + \sum_{i=1}^{n} c_{i} x^{n-i} \in F\left[x\right]$ of degree $n\ge2$. If $v\left(n\right) = 0$, and $v\left(c_{i}\right) > \frac{i}{n}v\left(c_{n}\right)$ for all $1 \le i\le n-1$, then $v\left(\Disc{f}\right) = \left(n-1\right) v\left(c_{n}\right)$, where $\Disc{f}$ denotes the discriminant of $f$.
\end{lemma}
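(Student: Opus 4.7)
The plan is to extend $v$ to an algebraic closure $\overline{F}$ of $F$, pin down the valuations of the roots of $f$ via the Newton polygon, and then apply the identity $\Disc{f} = (-1)^{n(n-1)/2}\prod_{i=1}^n f'(\alpha_i)$, where $\alpha_1,\dots,\alpha_n$ are the roots of $f$ in $\overline{F}$. Since the value group has already been enlarged to the divisible totally ordered group $\bbq\otimes_{\bbz}\Gamma$, such an extension of $v$ exists by standard valuation theory, and the arguments below take place inside $\overline{F}$ with this extended $v$.

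For the first step, I would observe that the hypothesis $v(c_i) > (i/n) v(c_n)$ asserts precisely that for each $1 \le i \le n-1$, the point $(n-i, v(c_i))$ lies strictly above the segment joining $(0, v(c_n))$ to $(n, 0)$ in the Newton diagram of $f$. Hence the Newton polygon of $f$ consists of a single segment of slope $-v(c_n)/n$, so every root $\alpha$ of $f$ in $\overline{F}$ satisfies $v(\alpha) = v(c_n)/n$. (Equivalently, since $(-1)^n c_n = \prod_i \alpha_i$ forces $\sum_i v(\alpha_i) = v(c_n)$, and the hypotheses preclude any root from having a smaller valuation, all roots must share this common valuation.)

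For the second step, I would analyze $f'(\alpha) = n\alpha^{n-1} + \sum_{i=1}^{n-1} (n-i) c_i \alpha^{n-i-1}$ for a fixed root $\alpha$. The leading term has valuation $v(n) + (n-1) v(\alpha) = (n-1) v(c_n)/n$, using the hypothesis $v(n) = 0$. For each $1 \le i \le n-1$, Lemma~\ref{lem:val} applied to the integer $n-i$ yields $v(n-i) \ge 0$, so the $i$-th remaining term has valuation at least $v(c_i) + (n-i-1) v(c_n)/n$, which by the hypothesis strictly exceeds $(n-1) v(c_n)/n$. Thus the leading term $n\alpha^{n-1}$ attains a strictly smaller valuation than any other term, and the strict ultrametric equality~\eqref{eqn:valuation} gives $v(f'(\alpha)) = (n-1) v(c_n)/n$.

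Summing over the $n$ roots and using the product formula for the discriminant yields
$$
    v(\Disc{f}) \;=\; \sum_{i=1}^n v(f'(\alpha_i)) \;=\; n \cdot \frac{(n-1)v(c_n)}{n} \;=\; (n-1)v(c_n),
$$
as required. The delicate point is verifying that the Newton polygon is the single segment described above and propagating the strict ultrametric equality cleanly across the $n$ terms of $f'(\alpha)$; both, however, are immediate consequences of the hypothesis once the extension of $v$ to $\overline{F}$ is in place, so I do not expect a substantial obstacle.
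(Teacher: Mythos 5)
Your proof is correct, but it follows a genuinely different route from the paper's. The paper never leaves $F$: it writes the generic discriminant of $x^{n}+\sum a_{i}x^{n-i}$ as an integer-coefficient polynomial $\sum_{I\in T} r_{I}a^{I}$, computes $r_{(0,\ldots,0,n-1)}=(-1)^{n(n-1)/2}n^{n}$ explicitly by evaluating the discriminant of $x^{n}+1$, observes that the single monomial $r_{(0,\ldots,0,n-1)}c_{n}^{n-1}$ has valuation exactly $(n-1)v(c_{n})$ while every other $r_{I}c^{I}$ has strictly larger valuation (using the hypothesis and $v(r_{I})\geq 0$ from Lemma~\ref{lem:val}), and concludes by the strict ultrametric equality~\eqref{eqn:valuation}. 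You instead extend $v$ to $\overline{F}$, use the single-segment Newton polygon to fix $v(\alpha)=v(c_{n})/n$ for every root $\alpha$, and then apply $\Disc{f}=(-1)^{n(n-1)/2}\prod_{i}f'(\alpha_{i})$, isolating the dominant term $n\alpha^{n-1}$ in $f'(\alpha)$ via $v(n)=0$. Your route is shorter and more conceptual, but it loads two pieces of foundational machinery that the paper avoids: the extension of $v$ to $\overline{F}$ (Chevalley's theorem; harmless here since the paper already replaced $\Gamma$ by its divisible hull $\bbq\otimes_{\bbz}\Gamma$, which contains the value group of any algebraic extension), and the Newton polygon slope criterion for a not-necessarily-discrete valuation with values in an arbitrary totally ordered group (also standard, and provable directly from Vieta by choosing a vertex index $k$ with $v(\alpha_{k})<v(\alpha_{k+1})$). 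One small point worth making explicit in your second step: when $\operatorname{char}F=q>0$ and $q\mid n-i$, the coefficient $n-i$ vanishes in $F$, so the corresponding term of $f'(\alpha)$ is zero and the bound from Lemma~\ref{lem:val} is vacuous; this does no harm, since the leading term $n\alpha^{n-1}$ still uniquely attains the minimum valuation.
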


\begin{proof}
	Let $T:=\{\left(d_{1},d_{2},\cdots,d_{n}\right) \in \bbz^{n} : d_{i}\geq 0, ~\sum_{1 \le i\le n} i d_{i} = n\left(n-1\right)\}$, which is a finite set. For indeterminates $a_{1}, a_{2}, \cdots, a_{n}$ and the monic polynomial $g\left(x\right) = x^{n} + \sum_{i=1}^{n} a_{i} x^{n-i} \in \left(\bbz\left[a_{1},a_{2},\cdots,a_{n}\right]\right)\left[x\right]$ in variable $x$, we can let that 
	\begin{align}\label{eq:disc}
		\Disc{g} = \sum\limits_{I \in T} r_{I} a^{I}, \text{ for some } r_{I} \in \bbz, \text{ where } a^{I}=\prod_{1 \le i \le n} a_{i}^{d_{i}}. 
	\end{align}
	In particular, the discriminant of the polynomial $x^{n}+1$ is $r_{\left(0,\cdots,0,n-1\right)}$. Since the zeros of $x^{n}+1$ are $\zeta_{2n}^{2i-1}$ for $i \in \left\{1,2,\cdots,n\right\}$, where $\zeta_{2n}$ is a $2n$-th primitive root of unity, we derive: \begin{align*}
		r_{\left(0,\cdots,0,n-1\right)}
		& = \left(\prod_{1\leq i<j\leq n}\left(\zeta_{2n}^{2i-1} - \zeta_{2n}^{2j-1}\right)\right)^{2} \\
		\notag & = \left(-1\right)^{\frac{n\left(n-1\right)}{2}}\prod_{i=1}^{n}\left(\prod_{j \ne i}\left(\zeta_{2n}^{2i-1} - \zeta_{2n}^{2j-1}\right)\right) \\
		\notag & = \left(-1\right)^{\frac{n\left(n-1\right)}{2}} \prod_{i=1}^{n}\left(\left.\frac{d}{dx}\left(x^{n}+1\right)\right|_{x=\zeta_{2n}^{2i-1}}\right) \\
		\notag & = \left(-1\right)^{\frac{n\left(n-1\right)}{2}} \prod_{i=1}^{n}\left(n\zeta_{2n}^{\left(n-1\right)\left(2i-1\right)}\right) \\
		\notag & = \left(-1\right)^{\frac{n\left(n-1\right)}{2}} n^{n}\left(\zeta_{2n}^{\left(n-1\right)n^{2}}\right) \\
		\notag & = \left(-1\right)^{\frac{n\left(n-1\right)}{2}} n^{n},
	\end{align*} and so $v\left(r_{\left(0,\cdots,0,n-1\right)}\right) = 0$ since $v\left(n\right)=0$.
	
	Now, for given $f$ and $r_{I}$ described in \eqref{eq:disc}, let $\Disc{f} = \sum\limits_{I \in T} r_{I} c^{I}$ where $c^{I}=\prod_{1 \le i \le n} c_{i}^{d_{i}}$. Then, for each $I = \left(d_{1},d_{2},\cdots,d_{n}\right) \ne \left(0,\cdots,0,n-1\right)$ in $T$, there exists $1\le k_{I}\le n-1$ with $d_{k_{I}} > 0$. By Lemma~\ref{lem:val} recalling $v\left(0\right) = \infty \ge 0$, we have $v\left(r_{I}\right) \ge 0$ for all $I$. By the assumption $v\left(c_{i}\right) > \frac{i}{n} v\left(c_{n}\right)$ for all $i<n$, we have: \begin{align*}
		v\left(r_{I}c^{I}\right) \ge v\left(c^{I}\right) = d_{k_{I}}v\left(c_{k_{I}}\right) + \left(\sum_{i \ne k_{I}} d_{i}v\left(c_{i}\right)\right)
		& > \frac{k_{I}d_{k_{I}}}{n}v\left(c_{n}\right) + \left(\sum_{i \ne k_{I}} d_{i} v\left(c_{i}\right)\right) \\
		\notag & \ge \frac{k_{I}d_{k_{I}}}{n}v\left(c_{n}\right) + \left(\sum_{i \ne k_{I}} \frac{id_{i}}{n}v\left(c_{n}\right)\right) \\
		\notag & = \left(n-1\right)v\left( c_{n}\right).
	\end{align*} Moreover, $v\left(r_{\left(0,\cdots,0,n-1\right)}c^{\left(0,\cdots,0,n-1\right)}\right) = v\left(c^{\left(0,\cdots,0,n-1\right)}\right) = \left(n-1\right) v\left(c_{n}\right)$. So, \eqref{eqn:valuation} implies that $v\left(\Disc{f}\right) = v\left(\sum_{I \in T} r_{I} c^{I}\right) = \left(n-1\right)v\left(c_{n}\right)$.
\end{proof}

\begin{lemma}\label{lem:cubic}
	Suppose that the characteristic of $F$ is $\geq 5$ and $F$ contains $\sqrt{3}$ and a primitive cube root of unity $\omega$. If the Galois group of an irreducible cubic polynomial $f\left(x\right) = x^{3} + c_{1}x^{2} + c_{2}x + c_{3} \in F\left[x\right]$ is cyclic and $v\left(c_{1}\right) > \frac{v\left(c_{3}\right)}{3}$ and $v\left(c_{2}\right) > \frac{2v\left(c_{3}\right)}{3}$, then the splitting field of $f$ over $F$ is generated by a cube root of $\beta \in F$ such that $v\left(\beta\right) = v\left(c_{3}\right)$.
\end{lemma}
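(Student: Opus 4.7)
Let $d := v(c_3)$; since $f$ is irreducible, $c_3 \ne 0$ and $d$ is finite. The Galois group of $f$ is cyclic of order three (the only transitive cyclic subgroup of $S_3$), so by Kummer theory—using $\omega\in F$—the splitting field of $f$ over $F$ has the form $F(\sqrt[3]{\beta})$ for some $\beta\in F^\times$. The task is to make this choice of $\beta$ explicit with $v(\beta)=d$, and the plan is to run through Cardano's formula while tracking valuations via~\eqref{eqn:valuation}. First, since $\operatorname{char}(F)\ge 5$, the substitution $x=y-c_1/3$ reduces $f$ to a depressed cubic $y^3+py+q$ with $p=c_2-c_1^2/3$ and $q=c_3-c_1c_2/3+2c_1^3/27$. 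The hypotheses $v(c_1)>d/3$ and $v(c_2)>2d/3$, combined with $v(2)=v(3)=0$ (Lemma~\ref{lem:val}), give $v(p)>2d/3$ and $v(q)=d$ (with $c_3$ as the dominant term) upon applying~\eqref{eqn:valuation}.

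Second, I will produce Cardano's square root inside $F$. Set $w^2:=q^2/4+p^3/27=-\Disc{f}/108$. The cyclic Galois hypothesis forces $\Disc{f}\in(F^\times)^2$; moreover $\omega\in F$ yields $\sqrt{-3}=2\omega+1\in F$, which combined with the given $\sqrt{3}\in F$ produces $\sqrt{-1}\in F$ and hence $\sqrt{108}\in F$. Consequently $w\in F$. Since $v(q^2/4)=2d$ while $v(p^3/27)>2d$, a second application of~\eqref{eqn:valuation} gives $v(w)=d$. Now define $\beta_\pm:=-q/2\pm w\in F$; then $\beta_+\beta_-=-p^3/27$ and $\beta_++\beta_-=-q$. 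By Cardano's construction, cube roots $\tilde u,\tilde v$ of $\beta_+,\beta_-$ can be chosen with $\tilde u\tilde v=-p/3$, and the three roots of $y^3+py+q$ are $\omega^k\tilde u+\omega^{-k}\tilde v$ for $k\in\{0,1,2\}$. When $p\neq 0$, both $\tilde u$ and $\tilde v$ lie in the splitting field and generate the same subfield of it (since $\tilde u\tilde v\in F^\times$), so the splitting field equals $F(\sqrt[3]{\beta_+})=F(\sqrt[3]{\beta_-})$. When $p=0$, exactly one of $\beta_\pm$ vanishes while the other equals $-q\ne 0$, and the splitting field is $F(\sqrt[3]{-q})$, with $v(-q)=d$ already of the required form.

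Finally, I select $\beta$ to have $v(\beta)=d$. From $\beta_++\beta_-=-q$ one reads $v(\beta_++\beta_-)=d$, while the ultrametric inequality applied term-wise gives $v(\beta_\pm)\ge\min(v(q/2),v(w))=d$. If both $v(\beta_\pm)>d$ held, then $v(\beta_++\beta_-)>d$, a contradiction; hence at least one of $v(\beta_+),v(\beta_-)$ equals $d$, and I take $\beta$ to be the corresponding candidate. The main subtlety is exactly this last step: a priori, unforeseen cancellation in $\beta_+$ or $\beta_-$ could push either valuation above $d$, and it is the tight equality $v(w)=d$ from the discriminant calculation—together with $v(q)=d$—that rules this out for at least one of the two candidates. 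The remaining steps are routine symbolic manipulation of Cardano's identities.
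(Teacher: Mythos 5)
Your argument is correct and follows the same route as the paper's own proof: both apply Cardano's formula to the depressed cubic, use the cyclic Galois hypothesis (together with $\omega,\sqrt{3}\in F$) to place the discriminantal square root in $F$, establish $v(\beta_\pm)\ge v(c_3)$ term-by-term, and exploit $\beta_++\beta_-=-q$ with $v(q)=v(c_3)$ to force at least one of $v(\beta_+),v(\beta_-)$ to equal $v(c_3)$. Your write-up is marginally more explicit in spots (separating the $p=0$ case, deriving $\sqrt{-1}\in F$, and computing $v$ of the radicand from the depressed-cubic coefficients rather than invoking Lemma~\ref{lem:disc} on $f$ directly), but it is essentially the same proof.
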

\begin{proof}
	We let $A := \frac{3c_{2}-c_{1}^{2}}{3}$ and $B := \frac{2c_{1}^{3}-9c_{1}c_{2}+27c_{3}}{27}$, so that $f\left(x-c_{1}/3\right)=x^{3} + Ax + B$. Since the Galois group of $f$ over $F$ is a cyclic transitive subgroup of the symmetric group of degree $3$, it is the alternating group of degree $3$. Hence, the discriminant $4A^{3}+27B^{2}$ of $f$ is a square in $F$. Let $R\in F$ be a square root of $4A^{3}+27B^{2}$. By Cardano's formula, the zeros of $f$ are $$
		\frac{c_{1}}{3} +\omega^{i} \sqrt[3]{\beta} - \frac{A/3}{\omega^{i}\sqrt[3]{\beta}}, \text{ for } i \in \left\{0,1,2\right\},
	$$ where $\beta$ is either $\beta_{+} := - \frac{B}{2} + \frac{R}{6\sqrt{3}}$ or $\beta_{-} : = - \frac{B}{2} - \frac{R}{6\sqrt{3}}$. It is easy to show that the splitting field of~$f$ is generated by $\sqrt[3]{\beta}$. 
	
	We show that the $v$-valuation of either $\beta_{+}$ or $\beta_{-}$ is $v\left(c_{3}\right)$. From the assumption that $v\left(c_{1}\right) > \frac{v\left(c_{3}\right)}{3}$ and $v\left(c_{2}\right) > \frac{2v\left(c_{3}\right)}{3}$, it follows that $2v\left(R\right) = v\left(\operatorname{Disc}\left(f\right)\right) = 2v \left(c_{3}\right)$ by Lemma~\ref{lem:disc}, and $v \left(B\right) = v\left(c_{3}\right)$ by~\eqref{eqn:valuation} and Lemma~\ref{lem:val}. By Lemma~\ref{lem:tors-free}, $ v\left(R\right) = v \left(c_{3}\right) = v\left(B\right)$. We also have $v\left(2\right)=v\left(6\sqrt{3}\right)=0$, by Lemma~\ref{lem:val}. Therefore, both $v\left( \beta_{+} \right)$ and $ v\left( \beta_{-} \right)$ satisfy $v\left( \beta_{+} \right), v\left( \beta_{-} \right) \ge v\left(c_{3}\right)$. If the $v$-valuations of both $\beta_{+}$ and $\beta_{-}$ are strictly greater than $v\left(c_{3}\right)$, then $v\left(c_{3}\right) = v\left(B\right) \ge v\left(\beta_{+} + \beta_{-}\right) > v\left(c_{3}\right)$, which is a contradiction. Thus, at least one of $v\left(\beta_{+}\right)$ or $v\left(\beta_{-}\right)$ must be equal to $v\left(c_{3}\right)$.
\end{proof}

The existence of such a  valuation $v$, as described in Proposition~\ref{prop:2or3}~\ref{goal_2or3} is established in \cite[the proof of Proposition~3.8]{IK24}. Once such a valuation $v$ is fixed, several important properties of $v$ are related to the polynomial coefficients $a_{k} \in \bbf_{p}\left[s,t\right] \subseteq L_{1}$ of the $p$th division polynomial (\cite[Lemma~2.5]{IK24}), \begin{equation}\label{theta}
	\theta_{s,t}\left(X\right) = a_{\frac{p-1}{2}} X^{\frac{p-1}{2}} + \left(\sum_{i=1}^{\frac{p-3}{2}} a_{\frac{p-1}{2}+ pi} X^{\frac{p-1}{2}-i}\right) + a_{\frac{p^{2}-1}{2}} \in \left(\bbf_{p}\left[s,t\right]\right)\left[X\right]
\end{equation} of the elliptic curve $y^{2} = x^{3} + sx +t$ over the function field $\Fp\left(s,t\right)$. We now list the relevant properties.

\begin{lemma}\label{lem:IK24} There exist discrete valuations $v$ on $L_{1}$ and $w$ on $L_{1}\left(x\left(E_{s,t}\left[p\right]\right)\right)$ such that 
	\begin{enumerate}[\normalfont (a)]
		\item\label{Kst} $v\left(c\right) \ge 0$ for all $c \in \bbk\left[s,t\right]$,
		\item\label{coeff} $v\left(a_{\frac{p-1}{2}+ pi}/a_{\frac{p-1}{2}}\right) \ge 0$ for all $0 \le i \le \frac{p-3}{2}$ and $v\left(a_{\frac{p^{2}-1}{2}}/a_{\frac{p-1}{2}}\right) = -1$,
		\item\label{zero} $w\left(u\right) = - 1$ for the $x$-coordinate $u$ of a point in $E_{s,t}\left[p\right]$ of order $p$,
		\item\label{ram_idx} $\left.w\right|_{L_{1}} = p\frac{p-1}{2}v$, and
		\item\label{disc} $v\left(4s^{3}+27t^{2}\right) = 0$.
	\end{enumerate}
\end{lemma}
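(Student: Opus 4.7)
The plan is to construct $v$ on $L_{1} = \Kst$ satisfying \ref{Kst}, \ref{coeff}, and \ref{disc}, and then to extend it to $w$ on $L_{1}(x(E_{s,t}[p]))$ so that \ref{zero} and \ref{ram_idx} hold. The construction of $v$ is already carried out in \cite[proof of Proposition~3.8]{IK24}; my primary task would be to recall that construction, verify \ref{Kst}, \ref{coeff}, and \ref{disc} directly, and then deduce \ref{zero} and \ref{ram_idx} from the Newton polygon of $\theta_{s,t}$ together with the degree structure of $L_{1}(x(E_{s,t}[p]))/L_{1}$.

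For $v$, the strategy of \cite[proof of Proposition~3.8]{IK24} is to embed $\bbk[s,t]$ into $\bbk[[\pi]]$ by sending $s \mapsto s(\pi)$ and $t \mapsto t(\pi)$ for a carefully chosen pair of formal power series, and to define $v$ as the pullback of the $\pi$-adic valuation. The embedding into $\bbk[[\pi]]$ gives \ref{Kst} immediately. Arranging that the specialization $(s(0),t(0)) \in \bbk^{2}$ avoids the cusp locus $4s^{3}+27t^{2}=0$ yields \ref{disc}. The delicate condition \ref{coeff} is secured by fine-tuning $(s(0),t(0))$ so that, using the explicit polynomial formulas for the coefficients $a_{k}$ of the Hasse-type polynomial $\theta_{s,t}$ recorded in \cite[Lemma~2.5]{IK24}, the leading coefficient $a_{(p-1)/2}$ specializes to a $\pi$-adic unit, every intermediate coefficient $a_{(p-1)/2+pi}$ lies in $\bbk[[\pi]]$, and the constant coefficient $a_{(p^{2}-1)/2}$ vanishes to order exactly one in $\pi$. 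Producing such a specialization is the technical heart of \cite[Proposition~3.8]{IK24}, and this is the main obstacle of the argument.

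Once $v$ is in hand, \ref{zero} and \ref{ram_idx} follow from Newton-polygon analysis combined with classical extension-of-valuations theory. By \ref{coeff}, the monic polynomial $\theta_{s,t}(X)/a_{(p-1)/2}$ of degree $(p-1)/2$ has constant coefficient of $v$-valuation $-1$ and intermediate coefficients of $v$-valuation $\ge 0$, so its Newton polygon is a single edge of slope $-2/(p-1)$; every root in the algebraic closure thus has $v$-valuation $-2/(p-1)$ under the canonical extension of $v$. The extension $L_{1}(x(E_{s,t}[p]))/L_{1}$ has degree $p(p-1)/2$, with separable degree $(p-1)/2$ (from the $(\bbz/p\bbz)^{\times}$-Galois action on the étale $p$-torsion modulo $[-1]$) and inseparable degree $p$ (from the connected part of the $p$-torsion group scheme of the ordinary elliptic curve $E_{s,t}$ in characteristic $p$), so the unique prolongation of $v$ to this extension is totally ramified of index $p(p-1)/2$. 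Renormalizing this prolongation to be $\bbz$-valued and denoting the result by $w$, we obtain $w|_{L_{1}} = \tfrac{p(p-1)}{2}\,v$, which gives \ref{ram_idx}. Finally, the $x$-coordinate $u$ of a point of order $p$ is identified with a suitable $p$-th root of a root of $\theta_{s,t}$ through the Frobenius--Verschiebung factorization of $[p]$ in characteristic $p$, so $v(u) = -2/(p(p-1))$ and hence $w(u) = \tfrac{p(p-1)}{2}\cdot\bigl(-2/(p(p-1))\bigr) = -1$, giving \ref{zero}.
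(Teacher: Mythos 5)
Your overall plan — obtain $v$ from the construction in \cite[Proposition~3.8]{IK24}, read off parts (a), (b), (e), and then derive (c) and (d) by a Newton-polygon analysis of $\theta_{s,t}/a_{(p-1)/2}$ — is sound and in fact more explicit than the paper's own proof, which simply cites IK24 for (a)--(d) and gives a short independent argument only for (e). However, there are two concrete issues.

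First, your description of the construction of $v$ has the roles of $a_{(p-1)/2}$ and $a_{(p^2-1)/2}$ reversed. You write that the specialization is chosen so that the leading coefficient $a_{(p-1)/2}$ becomes a $\pi$-adic unit while the constant coefficient $a_{(p^2-1)/2}$ vanishes to order one; but that would give $v\left(a_{(p^2-1)/2}/a_{(p-1)/2}\right)=+1$, whereas part \ref{coeff} asserts the valuation is $-1$. Since all $a_k\in\bbf_p[s,t]$ have $v\ge 0$ by \ref{Kst}, the correct picture is that $v\bigl(a_{(p-1)/2}\bigr)\ge 1$ and $v\bigl(a_{(p^2-1)/2}\bigr)=v\bigl(a_{(p-1)/2}\bigr)-1$; in other words, $v$ must be chosen so that the Hasse-invariant coefficient $a_{(p-1)/2}$ \emph{vanishes} at the specialization (the paper's argument for \ref{disc} confirms this: $v$ lives over a supersingular specialization, where the Hasse invariant has positive valuation). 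Your Newton-polygon computation later on does use the correct direction (constant term of $\theta_{s,t}/a_{(p-1)/2}$ has valuation $-1$, leading term $0$), so this is an internal inconsistency between the two halves of your write-up.

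Second, the clause ``so the unique prolongation of $v$ to this extension is totally ramified of index $p(p-1)/2$'' is a non sequitur as written: knowing the degree decomposition $[L_1(x(E_{s,t}[p])):L_1]=\tfrac{p-1}{2}\cdot p$ does not by itself force total ramification of a given valuation. What does force it is precisely the Newton-polygon computation you just made: every prolongation $v'$ satisfies $v'(u)=-\tfrac{2}{p(p-1)}$, so its ramification index is at least $\tfrac{p(p-1)}{2}$; combined with the degree bound $e\le[L_1(x(E_{s,t}[p])):L_1]=\tfrac{p(p-1)}{2}$, one gets $e=\tfrac{p(p-1)}{2}$ and uniqueness. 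Reordering the argument so that the Newton-polygon conclusion supplies the total ramification, rather than the degree decomposition, fixes this. With these corrections, your reconstruction would be a valid and more self-contained alternative to the paper's proof.
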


\begin{proof}
	The fields $L_{1}$ and $L_{1}\left(x\left(E_{s,t}\left[p\right]\right)\right)$ correspond to $F_{0}$ and $F_{1}$ in \cite[Proposition~3.8]{IK24}, respectively. We define our valuations $v$ and $w$ in the proof of \cite[Proposition~3.8]{IK24} for the case $k=1$. The Dedekind domain $R_{0}$ given in \cite[(11)]{IK24} contains $\bbk\left[s,t\right]$, so the part \ref{Kst} follows immediately.
	
	The part \ref{coeff} follows from \cite[Corollary~3.4]{IK24}, and the part~\ref{zero} follows from \cite[Proposition~3.8]{IK24} for $k=1$. The proof of \ref{ram_idx} is also given in the proof of \cite[Proposition~3.8]{IK24} for $k=1$.
	
	To prove \ref{disc}, we refer to \cite[(9), Proposition~3.3, and (3)]{IK24}. By the definition of $v$ in the proof of \cite[Proposition~3.8]{IK24} for $k=1$, the $v$-valuation of at least one among $s$, $t$, or $s^{3} - j_{\operatorname{ss}} \frac{s^{3} + 27t^{2}/4}{1728}$ is positive, where $j_{\ss} \ne 0,1728$ is the $j$-invariant of a supersingular elliptic curve over $\bbf_{p}^{\operatorname{alg}}$, noting that $\bbk \supseteq \bbf_{p}^{\operatorname{alg}}$. By \ref{Kst}, $v\left(4s^{3}+27t^{2}\right) \ge 0$. If $v\left(4s^{3}+27t^{2}\right) > 0$, then both $s$ and $t$ must have positive $v$-valuation, contradicting the fact that either $s$ or $t$ is a unit in~$R_{0}$, which completes the proof of \ref{disc}.
\end{proof}

\subsubsection{Proof of $\cNintersec = \S{N\ell}{N}$ for the cases when $\ell \in \left\{2,3\right\}$ does not divide $N$}\label{subsubsec:2or3}
In this section, we show the existence of $\lambda$ and $\mu$ in Proposition~\ref{prop:2or3}~\ref{goal_2or3} with respect to the valuation~$v$ in~Lemma~\ref{lem:IK24}, and complete the proof of $\cN_{p^{e}, \cB_{\ell N}, \ell} = \S{\ell N}{N}$ in the cases when $\ell \in \left\{2,3\right\}$ does not divide $N$.

\begin{lemma}\label{lem:gen}
	For $\ell \in \left\{2,3\right\}$, we have $\sqrt[\ell]{4s^{3}+27t^{2}} \in L_{\ell}$.
\end{lemma}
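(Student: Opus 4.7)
The plan is to treat the cases $\ell=2$ and $\ell=3$ separately, each time producing an explicit element of $L_{\ell}$ whose $\ell$-th power is a $\bbk^{\times}$-multiple of $4s^{3}+27t^{2}$.

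For $\ell=2$, the non-trivial $2$-torsion points of $E_{s,t}$ are exactly those with $y=0$, so their $x$-coordinates are the three roots $\alpha_{1},\alpha_{2},\alpha_{3}$ of $x^{3}+sx+t$; in particular $L_{2}\supseteq L_{1}(\alpha_{1},\alpha_{2},\alpha_{3})$. The discriminant of $x^{3}+sx+t$ equals $-(4s^{3}+27t^{2})$, so the product $\prod_{i<j}(\alpha_{i}-\alpha_{j})\in L_{2}$ is a square root of $-(4s^{3}+27t^{2})$. Since $\bbk\supseteq\Fp^{\alg}$ contains $\sqrt{-1}$, multiplying immediately gives $\sqrt{4s^{3}+27t^{2}}\in L_{2}$.

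For $\ell=3$, the plan is to analyze the resolvent cubic of the third division polynomial. The $x$-coordinates of the eight non-trivial $3$-torsion points are precisely the four roots $\alpha_{1},\alpha_{2},\alpha_{3},\alpha_{4}\in L_{3}$ of the monic quartic
\begin{equation*}
f(x)=x^{4}+2sx^{2}+4tx-\frac{s^{2}}{3}
\end{equation*}
obtained from the third division polynomial of $E_{s,t}$ by rescaling. Since the $\alpha_{i}$ sum to zero, for each of the three partitions of the roots into two pairs the quantity $y=(\alpha_{i}+\alpha_{j})^{2}$ lies in $L_{3}$, and the standard derivation of the resolvent cubic (seeking a factorization $f(x)=(x^{2}+ax+b)(x^{2}-ax+c)$ and eliminating $b,c$ from $b+c-a^{2}=2s$, $a(c-b)=4t$, $bc=-s^{2}/3$) shows that these three values of $y$ are precisely the roots of
\begin{equation*}
y^{3}+4sy^{2}+\frac{16s^{2}}{3}y-16t^{2}=0.
\end{equation*}
I then depress via $y=z-4s/3$; a short direct computation shows that both the $z^{2}$ and the $z$ coefficients vanish, leaving the pure cubic
\begin{equation*}
z^{3}=\frac{16}{27}\bigl(4s^{3}+27t^{2}\bigr),
\end{equation*}
with $z\in L_{3}$. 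Since $p\ge 5$, the scalar $16/27$ lies in $\Fp^{\times}$ and is therefore a cube in $\Fp^{\alg}\subseteq\bbk$; dividing $z$ by such a cube root produces an element of $L_{3}$ whose cube equals $4s^{3}+27t^{2}$.

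The only substantive obstacle is verifying the depression step for $\ell=3$, namely that after substituting $y=z-4s/3$ both the quadratic and the linear coefficients in $z$ really cancel, so that $z^{3}$ is a $\bbk^{\times}$-scalar multiple of precisely $4s^{3}+27t^{2}$ rather than of some more complicated polynomial in $s,t$. This cancellation ultimately reflects the fact that $4s^{3}+27t^{2}$ is, up to a non-zero constant, the discriminant of $E_{s,t}$. The $\ell=2$ case is entirely classical.
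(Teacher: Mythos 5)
Your proof is correct and, at heart, takes the same route as the paper: for $\ell=2$ extract the square root of the discriminant of the $2$-division cubic, and for $\ell=3$ produce a resolvent cubic over $L_{1}$ whose splitting field lies in $L_{3}$, depress it, and observe that the depressed form is a pure cubic $z^{3}=\tfrac{16}{27}(4s^{3}+27t^{2})$. The only stylistic difference is in how the resolvent cubic is built: you use the classical Ferrari resolvent of the (normalized) third division polynomial, with roots $(\alpha_{i}+\alpha_{j})^{2}$, obtained by positing a factorization $f(x)=(x^{2}+ax+b)(x^{2}-ax+c)$; the paper instead writes down the cubic with roots $-(\alpha_{i}\alpha_{j}+\alpha_{k}\alpha_{l})$ and computes its coefficients by Vieta. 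Since $\sum\alpha_{i}=0$, one has $(\alpha_{i}+\alpha_{j})^{2}=-(\alpha_{i}+\alpha_{j})(\alpha_{k}+\alpha_{l})=-2s+(\alpha_{i}\alpha_{j}+\alpha_{k}\alpha_{l})$, so the two cubics differ by an $L_{1}$-affine change of variable and depress to the same pure cubic (up to sign). Your $\ell=2$ argument is also slightly more careful than the paper's wording: you note the discriminant of $x^{3}+sx+t$ is $-(4s^{3}+27t^{2})$ and absorb the sign using $\sqrt{-1}\in\Fp^{\alg}\subseteq\bbk$, whereas the paper glosses over this sign. No gaps; the computations check out.
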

\begin{proof}
	For $\ell=2$, $L_{2}$ is the splitting field of the cubic polynomial $x^{3} + sx + t$ over $L_{1}$, so it contains $\sqrt{4s^{3} + 27t^{2}}$, which is a square root of the discriminant of $x^{3} + sx + t$.
	
	We show that $\sqrt[3]{4s^{3}+27t^{2}} \in L_{3}$. For a basis $\left\{P_{3},Q_{3}\right\}$ of $E_{s,t}\left[3\right]$, we consider the cubic polynomial, \begin{align*}
		f\left(T\right)
		:= & \left(T + \left(x\left(P_{3}\right)x\left(Q_{3}\right) + x\left(P_{3} - Q_{3}\right)x\left(P_{3} + Q_{3}\right)\right)\right) \\
		\notag & \phantom{=} \times \left(T + \left(x\left(P_{3}\right)x\left(P_{3}-Q_{3}\right) + x\left(Q_{3}\right)x\left(P_{3}+Q_{3}\right)\right)\right) \\
		\notag & \phantom{=} \times \left(T + \left(x\left(P_{3}\right)x\left(P_{3}+Q_{3}\right) + x\left(Q_{3}\right)x\left(P_{3}-Q_{3}\right)\right)\right) \\
	\end{align*} defined over $L_{3}$. It is easy to show that $f$ is invariant under $\Gal\left(L_{3}/L_{1}\right)$, so $f\left(T\right)\in L_{1}\left[T\right]$. To compute the coefficients of $f$ explicitly, we use Vieta's formulas for the 3rd division polynomial, $3x^{4} + 6s x^{2} + 12t x - s^{2}$ of $E_{s,t}$. The zeros of the 3rd division polynomial are the $x$-coordinates of points in $E_{s,t}$ of order $3$ (\cite[Ch.III, Exercise 3.7]{Silverman}). For $1\le k \le 4$, if $\mathcal{S}_{k}$ denotes the elementary symmetric polynomials of degree $k$ in $\alpha_{1} := x\left(P_{3}\right)$, $\alpha_{2} := x\left(Q_{3}\right)$, $\alpha_{3} := x\left(P_{3} - Q_{3}\right)$, and $\alpha_{4} := x\left(P_{3} + Q_{3}\right)$, then Vieta's formulas for the 3rd division polynomial lead: $$
		\mathcal{S}_{1} = 0, \mathcal{S}_{2} = 2s, \mathcal{S}_{3} = -4t \text{, and } \mathcal{S}_{4} = -s^{2}/3.
	$$

	The $T^{2}$-coefficient of $f$ is $$
		\sum_{i<j} \alpha_{i}\alpha_{j} = \mathcal{S}_{2} = 2s,
	$$ 
	its $T$-coefficient is \begin{align*}
		\prod_{\substack{i<j \\ k\ne i,j}}\alpha_{i}\alpha_{j}\alpha_{k}^{2}
		= \left(\sum_{t}\alpha_{t}\right) \left(\sum_{i<j<k}\alpha_{i}\alpha_{j}\alpha_{k}\right) - 4\alpha_{1}\alpha_{2}\alpha_{3}\alpha_{4} = \mathcal{S}_{1}\mathcal{S}_{3} - 4\mathcal{S}_{4}= \frac{4}{3}s^{2},\\
	\end{align*}
	and its constant term is \begin{align*}
		\left(\alpha_{1}\alpha_{2}+\alpha_{3}\alpha_{4}\right) \left(\alpha_{1}\alpha_{3}+\alpha_{2}\alpha_{4}\right) \left(\alpha_{1}\alpha_{4}+\alpha_{2}\alpha_{3}\right)
		& = \left(\alpha_{1}\alpha_{2}\alpha_{3}\alpha_{4}\right) \left(\sum_{i} \alpha_{i}^{2} \right) + \left( \sum_{i<j<k}\alpha_{i}^{2}\alpha_{j}^{2}\alpha_{k}^{2} \right)\\
		\notag & = \mathcal{S}_{4} \left( \mathcal{S}_{1}^{2} - 2 \mathcal{S}_{2} \right) + \mathcal{S}_{3}^{2} - 2\left(\alpha_{1}\alpha_{2}\alpha_{3}\alpha_{4}\right) \left( \sum_{i<j}\alpha_{i}\alpha_{j} \right)\\
		\notag & = \mathcal{S}_{4} \left( \mathcal{S}_{1}^{2} - 2 \mathcal{S}_{2} \right) + \mathcal{S}_{3}^{2} - 2 \mathcal{S}_{4}\mathcal{S}_{2}\\
		\notag & = \frac{8}{3}s^{3}+16t^{2}.\\
	\end{align*}
	A direct computation shows that $f\left(T-2s/3\right) = T^{3} + \frac{16}{27} \left(4s^{3}+27t^{2}\right)$. Since $\sqrt[3]{16/27} \in \Fp^{\alg} \subseteq L_{1}$, the splitting field of $f$ over $L_{1}$ contains $\sqrt[3]{4s^{3}+27t^{2}}$, and the splitting field of $f$ is contained in $L_{3}$ by the definition of $f$.
\end{proof}

For the discrete valuation $v$ given in Lemma~\ref{lem:IK24}, $\lambda := 4s^{3}+27t^{2} \in L_{1}$ satisfy the conditions $\sqrt[\ell]{\lambda} \notin L_{1}$, $\sqrt[\ell]{\lambda} \in L_{\ell}$, and $\ell \mid v\left(\lambda\right)$ in Proposition~\ref{prop:2or3}~\ref{goal_2or3}. The last step to prove $\cNintersec = \S{N \ell}{N}$ in Figure~\ref{fig:fld_extn} for the cases when $\ell \in \left\{2,3\right\}$ does not divide $N$ is finding $\mu \in L_{1}$ such that $\ell \nmid v\left(\mu\right)$ and $\sqrt[\ell]{\mu} \in L_{\ell}$. Such $\mu$ is given in the poof of Proposition~\ref{prop:2} for $\ell = 2$, or Proposition~\ref{prop:3} for $\ell=3$.

\begin{prop}\label{prop:2}
	Let $N$ be a positive integer such that $2 \nmid N$. If $G_{e, \cB_{N}} = \peu \times \SL{N}$ for a basis $\cB_{N}$ of $E_{s,t}\left[N\right]$, then $\cN_{p^{e}, \cB_{2 N}, 2} = \S{2 N}{N}$ for any basis $\cB_{2N}$ of $E_{s,t}\left[2N\right]$ referring to Figure~\ref{fig:fld_extn}.
\end{prop}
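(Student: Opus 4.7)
The plan is to apply Proposition~\ref{prop:2or3}\ref{goal_2or3} with $\ell=2$. Since $2\mid p-1$ for every odd prime $p\geq 5$, case~\ref{easy} of Proposition~\ref{prop:2or3} never applies, so the task reduces to producing explicit $\lambda,\mu\in L_1$ meeting the conditions of~\ref{goal_2or3} with respect to the valuation $v$ supplied by Lemma~\ref{lem:IK24}.

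For $\lambda$, I take $\lambda = 4s^3 + 27t^2$: Lemma~\ref{lem:gen} gives $\sqrt{\lambda}\in L_2$, and Lemma~\ref{lem:IK24}\ref{disc} gives $v(\lambda)=0$, which is divisible by $2$. To verify $\sqrt{\lambda}\notin L_1$, observe that Theorem~\ref{thm:fact:N} together with the isomorphism $\SL{2}\cong S_3$ shows $L_2/L_1$ is $S_3$-Galois, so the cubic $x^3+sx+t$ has Galois group $S_3 \not\subseteq A_3$ over $L_1$; hence its discriminant $-(4s^3+27t^2)$ is not a square in $L_1$, and since $\sqrt{-1}\in\Fp^{\alg}\subseteq L_1$, neither is $4s^3+27t^2$.

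For $\mu$, the plan is to set
\[
\mu \;=\; \prod_{i=1}^{(p-1)/2} \bigl(u_i^3 + s u_i + t\bigr),
\]
where $u_1,\ldots,u_{(p-1)/2}$ are the roots of the $p$-th division polynomial $\theta_{s,t}$ from~\eqref{theta}. By symmetry $\mu\in L_1$, and for any lifts $P_i\in E_{s,t}[p]$ with $x(P_i)=u_i$, we have $\mu = \prod_i y(P_i)^2$, so $\sqrt{\mu} = \pm\prod_i y(P_i)\in L_p$. To compute $v(\mu)$, I would analyze the Newton polygon of $\theta_{s,t}$ relative to $v$: by Lemma~\ref{lem:IK24}\ref{coeff}, after normalizing to monic form, the intermediate vertices lie strictly above the segment joining $(0,-1)$ and $((p-1)/2,0)$, so the polygon is this single segment of slope $2/(p-1)$, forcing every root to satisfy $v(u_i) = -2/(p-1)$. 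Combined with $v(s),v(t)\geq 0$ from Lemma~\ref{lem:IK24}\ref{Kst} and the assumption $p\geq 5$, the three summands $u_i^3,\, s u_i,\, t$ have pairwise distinct $v$-valuations with $v(u_i^3) = -6/(p-1)$ strictly the smallest; equation~\eqref{eqn:valuation} then gives $v(u_i^3 + s u_i + t) = -6/(p-1)$, hence $v(\mu) = \tfrac{p-1}{2}\cdot\bigl(-\tfrac{6}{p-1}\bigr) = -3$, which is odd.

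The main technical step is the Newton polygon computation that yields $v(\mu)=-3$, and in particular the verification that $v(u_i^3)$ strictly dominates the other two terms — a check that is straightforward once Lemma~\ref{lem:IK24}\ref{coeff} and \ref{Kst} are in place, but genuinely requires $p\geq 5$ to rule out accidental coincidences of valuation. Once the odd valuation of $\mu$ is established, Proposition~\ref{prop:2or3}\ref{goal_2or3} applies directly and delivers $\cN_{p^e,\cB_{2N},2} = \S{2N}{N}$.
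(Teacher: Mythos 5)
Your strategy — invoke Proposition~\ref{prop:2or3}~\ref{goal_2or3} with the same $\lambda=4s^{3}+27t^{2}$, and produce $\mu$ directly from $p$-torsion via a Newton-polygon estimate — is in the spirit of the paper, and your verification that $\sqrt{\lambda}\notin L_1$ via the $S_3$-Galois group of $x^{3}+sx+t$ is a correct and slightly more explicit version of the paper's ``obviously''. However, the construction of $\mu$ has a genuine gap that traces back to a misreading of \eqref{theta}: \emph{the roots of $\theta_{s,t}$ are the $p$th powers of the $x$-coordinates of the order-$p$ points}, not the $x$-coordinates themselves (the paper states this explicitly in its own proof). Your proposal conflates these two sets, and under either reading one of your three required properties of $\mu$ fails.

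If the $u_i$ are the $x$-coordinates (so that $\mu=\prod_i y(P_i)^2$ and $\sqrt{\mu}\in L_p$ is legitimate), then $\mu$ is \emph{not} in $L_1$: it is a symmetric function of the $u_i$, whereas only symmetric functions of the $u_i^{\,p}$ are guaranteed to lie in $L_1$ (the coefficients of $\theta_{s,t}$ are in $L_1$, so the $u_i$ themselves may only lie in a purely inseparable extension). Concretely, the Newton polygon of $\theta_{s,t}$ combined with Lemma~\ref{lem:IK24}~\ref{ram_idx} gives $w(u_i)=-1$ and hence, for the rational extension $\tilde v$ of $v$, $\tilde v(u_i)=-\tfrac{2}{p(p-1)}$, not $-\tfrac{2}{p-1}$; so $\tilde v(\mu)=-3/p\notin\bbz$, which already shows $\mu\notin L_1$. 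If instead the $u_i$ are taken to be the roots of $\theta_{s,t}$, then $\mu\in L_1$ and $v(\mu)=-3$ as you compute, but $\mu$ is no longer $\prod_i y(P_i)^2$ (since $(u^{3}+su+t)^{p}=u^{3p}+s^{p}u^{p}+t^{p}\neq (u^{p})^{3}+s\,u^{p}+t$), and the key claim $\sqrt{\mu}\in L_p$ is left without justification.

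The good news is that your construction is easily repaired and, once repaired, gives a cleaner unified proof than the paper's. Take
\[
\mu \;:=\; \prod_{i=1}^{(p-1)/2} y(P_i)^{2p} \;=\; \prod_{i=1}^{(p-1)/2}\Bigl((u_i^{\,p})^3+s^{p}(u_i^{\,p})+t^{p}\Bigr).
\]
Then $\mu$ is a symmetric function of the roots $u_i^{\,p}$ of $\theta_{s,t}$ with coefficients in $L_1$, so $\mu\in L_1$; $\sqrt{\mu}=\pm\prod_i y(P_i)^{p}\in L_p$; and your Newton-polygon argument now applies verbatim to the roots $u_i^{\,p}$ of $\theta_{s,t}$ (whose $\tilde v$-valuation is exactly $-\tfrac{2}{p-1}$), giving $v(\mu)=\tfrac{p-1}{2}\cdot\bigl(-\tfrac{6}{p-1}\bigr)=-3$, odd. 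This would replace the paper's split into the two cases $p\equiv 1\pmod 4$ (where $\mu=\Disc{\tfrac{1}{a_{(p-1)/2}}\theta_{s,t}}$ with $v(\mu)=-\tfrac{p-3}{2}$) and $p\equiv 3\pmod 4$ (where the paper argues indirectly, using that $[L_1(x(E_{s,t}[p])):L_1]=p\tfrac{p-1}{2}$ is odd and that a point of order $p$ has coordinates $(u,b\sqrt{\mu})$ with $b^{2}\mu=u^{3}+su+t$, then deducing the parity of $v(\mu)$ from $w$). One further small overstatement in your write-up: you do not need the three terms $u_i^{3},\,su_i,\,t$ to have pairwise distinct valuations; it suffices that $v(u_i^3)$ is strictly smaller than both of the others, which is exactly what \eqref{eqn:valuation} requires.
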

\begin{proof}
	Let $\lambda := 4s^{3} + 27t^{2}$. For the discrete valuation $v$ given in Lemma~\ref{lem:IK24}, we have that $\sqrt{\lambda} \in L_{2}$ and $2 \mid v\left(\lambda\right)$, by Lemma~\ref{lem:gen} and Lemma~\ref{lem:IK24}~\ref{disc}. Obviously, $\sqrt{\lambda} \notin\Kst=L_{1}$. If we find a $\mu \in L_{1}$ such that $\sqrt{\mu} \in L_{p}$ and $v\left(\mu\right)$ is odd, then we complete the proof by Proposition~\ref{prop:2or3}~\ref{goal_2or3}.
	
	For $p \equiv 1 \pmod{4}$, we let $\mu$ be the discriminant of the monic polynomial $\frac{1}{a_{\frac{p-1}{2}}}\theta_{s,t}\left(X\right) \in L_{1}\left[X\right]$ given by \eqref{theta}. Since the zeros of $\frac{1}{a_{\frac{p-1}{2}}}\theta_{s,t}\left(X\right)$ are the $p$th powers of the $x$-coordinates of points in $E_{s,t}$ of order $p$, we have that $\sqrt{\mu} \in L_{1}\left(x\left(E_{s,t}\left[p\right]\right)\right) \subseteq L_{p}$. By Lemma~\ref{lem:IK24}~\ref{coeff} and Lemma~\ref{lem:disc}, $v\left(\mu\right)=- \frac{p-3}{2}$, which is odd.

	If $p \equiv 3 \pmod{4}$, then by Theorem~\ref{thm:fact:pe} and \cite[Proposition~3.10]{IK24}, the extension $L_{p} / L_{1}$ is normal with the cyclic automorphism group $\pu$ of order $p-1$ and inseparable degree~$p$. Hence, there is an element $\mu \in L_{1}$ such that $\sqrt{\mu} \in L_{p}$. By \cite[Corollary~3.7]{IK24}, it is known that the degree of the extension $L_{1}\left(x\left(E_{s,t}\left[p\right]\right)\right) $ over $ L_{1}$ is $p\frac{p-1}{2}$, which is odd. Therefore, we have that $\sqrt{\mu} \notin L_{1}\left(x\left(E_{s,t}\left[p\right]\right)\right)$ and conclude that $L_{p} = L_{1}\left(x\left(E_{s,t}\left[p\right]\right),\sqrt{\mu}\right)$.

	Therefore, for the $x$-coordinate $u$ of a point in $E_{s,t}\left[p\right]$ of order $p$, there exist $a,b \in L_{1}\left(x\left(E_{s,t}\left[p\right]\right)\right)$ such that $\left(u,a+b\sqrt{\mu}\right) \in E_{s,t}\left[p\right]$. We note that $b \ne 0$, since $L_{1}\left(x\left(E_{s,t}\left[p\right]\right)\right) \ne L_{p}$. Thus, the equality $\left(a+b\sqrt{\mu}\right)^{2} = u^{3} + su + t$ implies $a=0$. For the discrete valuation~$w$ given in Lemma~\ref{lem:IK24}, from $b^{2} \mu = u^{3} + su + t$, we deduce $w\left(b^{2} \mu\right) = w\left( u^{3} + su + t \right) = -3$ by Lemma~\ref{lem:IK24}~\ref{zero}, Lemma~\ref{lem:IK24}~\ref{Kst}, and the equality~\eqref{eqn:valuation}, so $w\left(\mu\right) \equiv w\left(b^{2} \mu\right) \equiv 1 \pmod{2}$. Finally, Lemma~\ref{lem:IK24}~\ref{ram_idx} shows that $v\left(\mu\right)$ is odd.
\end{proof}

Now, we consider when $\ell=3$.
\begin{prop}\label{prop:3}
	Let $N$ be a positive integer such that $3 \nmid N$. If $G_{e, \cB_{N}} = \peu \times \SL{N}$ for any $\cB_{N}$ of $E_{s,t}\left[N\right]$, then $\cN_{p^{e}, \cB_{3 N}, 3} = \S{3 N}{N}$ for any basis $\cB_{3N}$ of $E_{s,t}\left[3N\right]$ referring to Figure~\ref{fig:fld_extn}.
\end{prop}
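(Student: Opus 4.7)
The plan is to apply Proposition~\ref{prop:2or3} with $\ell = 3$, using $\lambda := 4s^{3} + 27t^{2}$ exactly as in the $\ell = 2$ case of Proposition~\ref{prop:2}. By Lemma~\ref{lem:gen} we have $\sqrt[3]{\lambda} \in L_{3}$; by Lemma~\ref{lem:IK24}~\ref{disc}, $v(\lambda) = 0$ is divisible by $3$; and $\sqrt[3]{\lambda} \notin L_{1}$ since $4s^{3} + 27t^{2}$ is irreducible in the UFD $\bbk[s,t]$ (viewed as a quadratic in $t$ whose constant term $-4s^{3}/27$ is not a square in $\bbk(s)$) and hence not a cube in $L_{1} = \bbk(s,t)$. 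If $3 \nmid p-1$, Proposition~\ref{prop:2or3}~\ref{easy} concludes the proof at once, so I may assume $3 \mid p-1$ from now on; it then remains only to produce $\mu \in L_{1}$ with $\sqrt[3]{\mu} \in L_{p}$ and $3 \nmid v(\mu)$, after which Proposition~\ref{prop:2or3}~\ref{goal_2or3} will finish the argument.

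To construct such a $\mu$, I will use the unique degree-$3$ separable subextension $F$ of $L_{p}/L_{1}$. By Theorem~\ref{thm:fact:pe}, $\fkA(L_{p}/L_{1}) \cong \Fpx$ is cyclic of order $p-1$; since $3 \mid p-1$, it contains a unique index-$3$ subgroup $H$, namely the subgroup of cubes. The element $-1 = (-1)^{3}$ lies in $H$, so $\{\pm 1\} \subseteq H$, and therefore $F$ sits inside $L_{1}(x(E_{s,t}[p])) = L_{p}^{\{\pm 1\}}$. Since $L_{1} \supseteq \Fp^{\alg}$ contains a primitive cube root of unity and $p \neq 3$, Kummer theory supplies $\mu \in L_{1}$ with $F = L_{1}(\sqrt[3]{\mu})$, and $\sqrt[3]{\mu} \in F \subseteq L_{p}$ is automatic.

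The main obstacle is to show $3 \nmid v(\mu)$, which I plan to handle via total ramification. By Lemma~\ref{lem:IK24}~\ref{ram_idx}, $w|_{L_{1}} = p(p-1)/2 \cdot v$, so the ramification index of $w$ over $v$ equals the full degree $[L_{1}(x(E_{s,t}[p])):L_{1}] = p(p-1)/2$; thus $L_{1}(x(E_{s,t}[p]))/L_{1}$ is totally ramified at $v$, and multiplicativity of ramification indices forces every intermediate extension---in particular $F/L_{1}$---to be totally ramified at $v$ with ramification index equal to $[F:L_{1}] = 3$. If, for contradiction, $3 \mid v(\mu)$, say $v(\mu) = 3k$, then for any uniformizer $\pi$ of $v$ in $L_{1}$ the element $u := \mu \pi^{-3k} \in L_{1}$ is a $v$-unit satisfying $F = L_{1}(\sqrt[3]{u})$; since $p \neq 3$, the polynomial $X^{3} - \bar{u}$ over the residue field of $v$ is separable, so Hensel's lemma forces $L_{1}(\sqrt[3]{u})/L_{1}$ to be unramified at $v$, contradicting the total ramification of $F/L_{1}$. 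Hence $3 \nmid v(\mu)$, and Proposition~\ref{prop:2or3}~\ref{goal_2or3} completes the proof.
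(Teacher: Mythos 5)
Your proof is correct, but it follows a genuinely different route from the paper's. Both proofs agree on the opening moves: reduce via Proposition~\ref{prop:2or3}\ref{easy} when $3\nmid p-1$, and otherwise set $\lambda := 4s^{3}+27t^{2}$, using Lemma~\ref{lem:gen} and Lemma~\ref{lem:IK24}\ref{disc} to verify the required properties of $\lambda$, so that everything reduces to producing a $\mu \in L_{1}$ with $\sqrt[3]{\mu}\in L_{p}$ and $3\nmid v(\mu)$. From there the two arguments diverge. The paper is constructive: it writes down the explicit cubic $f(T)=\prod_{r=0}^{2}\bigl(T+\bigl(\prod_{k}x(\alpha^{r+3k}R)\bigr)^{p}\bigr)$ with coefficients in $L_{1}$, uses Lemma~\ref{lem:IK24}\ref{zero}, \ref{ram_idx} to bound the $v$-valuations of the coefficients, invokes Eisenstein's criterion to prove irreducibility, and then applies the Cardano-formula Lemma~\ref{lem:cubic} to extract $\mu$ with the pinned-down value $v(\mu)=v(c_{3})=-1$. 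You instead proceed abstractly: you locate the unique cubic separable subextension $F$ of $L_{p}/L_{1}$ using the cyclic structure of $\fkA(L_{p}/L_{1})\cong\Fpx$ from Theorem~\ref{thm:fact:pe}, observe that $-1$ is a cube so $\{\pm 1\}$ lies in the index-$3$ subgroup and hence $F\subseteq L_{1}(x(E_{s,t}[p]))$, produce $\mu$ from Kummer theory, and then force $3\nmid v(\mu)$ indirectly: Lemma~\ref{lem:IK24}\ref{ram_idx} shows $L_{1}(x(E_{s,t}[p]))/L_{1}$ is totally ramified at $v$ (ramification index equals degree), multiplicativity makes $F/L_{1}$ totally ramified of index $3$, while a cube root of a $v$-unit would give an unramified extension. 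Your argument is cleaner in that it avoids the explicit polynomial, the Eisenstein computation, and Lemma~\ref{lem:cubic} altogether, at the cost of invoking a bit more ramification theory (total ramification of subextensions, unramifiedness of $L_{1}(\sqrt[3]{u})/L_{1}$ when $u$ is a $v$-unit with $3$ invertible); the paper's proof is more hands-on and yields a concrete value $v(\mu)=-1$ rather than just its nondivisibility by $3$. One small presentational remark: you phrase the Galois-correspondence step as if $L_{p}/L_{1}$ were Galois, whereas it is only normal with inseparable degree $p$; the argument goes through once one works with the separable closure of $L_{1}$ in $L_{p}$ and uses that $L_{1}(x(E_{s,t}[p]))=L_{p}^{\{\pm 1\}}$, so it is worth spelling that out.
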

\begin{proof}
	If $p \equiv 2\pmod{3}$, the proof follows from Proposition~\ref{prop:2or3}~\ref{easy}. 
	
	Suppose that $p \equiv 1\pmod{3}$. Let $\lambda := 4s^{3} + 27t^{2}$. For the discrete valuation $v$ given in Lemma~\ref{lem:IK24}, we have that $\sqrt[3]{\lambda} \in L_{3}$ and $3 \mid v\left(\lambda\right)$, by Lemma~\ref{lem:gen} and Lemma~\ref{lem:IK24}~\ref{disc}. Obviously, $\sqrt[3]{\lambda} \notin \Kst = L_{1}$. Then, by Proposition~\ref{prop:2or3}~\ref{goal_2or3}, it is enough to find $\mu \in L_{1}$ such that $\sqrt[3]{\mu} \in L_{p}$ and $v\left(\mu\right)=-1$, to complete the proof.

	For a generator $R$ of $E_{s,t}\left[p\right]$ and a generator $\alpha$ of $\pu \cong \fkA\left(L_{p}/L_{1}\right)$, a direct computation shows that the cubic polynomial $$
		f\left(T\right)
		:= T^{3} + c_{1} T^{2} + c_{2} T + c_{3}
		:= \prod_{r=0}^{2} \left(T + \left(\prod_{k=0}^{\frac{p-1}{6}-1}x\left(\alpha^{r+3k}R\right)\right)^{p} \right)
	$$ defined over $L_{p}$ is invariant under $\fkA\left(L_{p}/L_{1}\right)$. Moreover, the inseparable degree of $L_{p} / L_{1}$ is $p$ by \cite[Proposition~3.10]{IK24}. Therefore, the coefficients $c_{1}, c_{2}, c_{3} \in L_{p}$ are separable over $L_{1}$ and are invariant under $\fkA\left(L_{p}/L_{1}\right)$. This implies $c_{1}, c_{2}, c_{3} \in L_{1}$. For the discrete valuation $w$ given in Lemma~\ref{lem:IK24}, we have $w\left(x\left(\alpha^{i}R\right)\right) = -1$ for all $0 \le i \le \frac{p-1}{2}-1$, by Lemma~\ref{lem:IK24}~\ref{zero}. Thus, $w\left(c_{1}\right) \ge -p\frac{p-1}{6}$, $w\left(c_{2}\right) \ge -p\frac{p-1}{3}$, and $w\left(c_{3}\right) = -p\frac{p-1}{2}$. Since $c_{1}, c_{2}, c_{3} \in L_{1}$, we conclude that $v\left(c_{1}\right) \ge -\frac{1}{3}$, $v\left(c_{2}\right) \ge -\frac{2}{3}$, and $v\left(c_{3}\right) = -1$ by Lemma~\ref{lem:IK24}~\ref{ram_idx}. Since the $v$-valuations are integers, we deduce:\begin{equation}\label{eqn:coef_ineq}
		v\left(c_{1}\right)\ge 0 > \frac{1}{3}v\left(c_{3}\right) \text{ and } v\left(c_{2}\right)\ge 0 > \frac{2}{3}v\left(c_{3}\right).
	\end{equation}
	Therefore, by Eisenstein's criterion, the polynomial $U^{3} + \frac{c_{2}}{c_{3}} U^{2} + \frac{c_{1}}{c_{3}} U + \frac{1}{c_{3}}$ of variable $U$ is irreducible over $\left\{a\in L_{1}: v\left(a\right) \ge 0\right\}$ which is the DVR (discrete valuation ring) with respect to the discrete valuation $v$. By Gauss's lemma (\cite[§~9.~Proposition~5]{Dummit-Foote}), $f$ is irreducible over~$L_{1}$ which is the field of fraction of this DVR. Since each zero of $f$ is fixed by $\alpha^{3}$, the Galois group of $f$ over $L_{1}$ is a quotient group of the cyclic group $\left\langle \alpha \right\rangle / \left\langle \alpha^{3} \right\rangle$. By Lemma~\ref{lem:cubic} and \eqref{eqn:coef_ineq}, the splitting field of $f$ over $L_{1}$ is generated by a cube root of $\mu \in L_{1}$ such that $v\left(\mu\right) = v\left(c_{3}\right) = -1$. Since the splitting field of $f$ over $L_{1}$ is contained in $L_{p}$, it follows that $\sqrt[3]{\mu} \in L_{p}$.
\end{proof}

\section{Proofs of main results}\label{sec:thm_and_cor}

We are ready to prove Proposition~\ref{prop:induction} and Theorem~\ref{thm:main} by synthesizing the results from the previous sections.

\begin{proof}[Proof of Proposition~\ref{prop:induction}]
	By Proposition~\ref{prop:general}, Proposition~\ref{prop:2}, and Proposition~\ref{prop:3}, we have $\cNintersec = \S{N \ell}{N}$ referring to Figure~\ref{fig:fld_extn} for any prime $\ell$ and any basis $\cB_{N}$ of $E_{s,t}\left[N\right]$. Proposition~\ref{prop:tool}~\ref{goal} completes the proof.
    
\end{proof}

\begin{proof}[Proof of Theorem~\ref{thm:main}]
	By Proposition~\ref{prop:induction} and mathematical induction, it suffices to prove the statement only for $N=1$. This base case when $N=1$ is exactly Theorem~\ref{thm:fact:pe}.
    
\end{proof}

Now, we prove Corollary~\ref{cor:gen}, which determines the automorphism group of $p^{e} N$-torsion subgroup of $E_{s,t}$, where $E_{s,t}$ is defined over a field not necessarily containing $\Fp^{\alg}$:

\begin{proof}[Proof of Corollary~\ref{cor:gen}]
	Via the Weil pairing proves, we have that $$
		G_{e, \cB_{N}} \subseteq \left\{\left(u,A\right) \in \peu \times \GL{N}: \det A \in \Im\left(\cyc\right)\right\}.
	$$ 
	Next, we show the reverse inclusion `$\supseteq$'.
	For each $\left(u,A\right) \in \peu \times \GL{N}$ such that $\det A \in \Im\left(\cyc\right)$, there exists $\sigma \in \Gal\left(K\left(\zeta_{N}\right)/K\right)$ such that $\zeta_{N}^{\sigma} = \zeta_{N}^{\det A}$, where $\zeta_{N}\in \Fp^{\alg}$ is a primitive $N$th root of unity. Via the Weil pairing, we see that $\zeta_{N} \in K\left(E_{s,t}\left[p^{e} N\right]\right)$. Hence, there exists $\widetilde{\sigma} \in \fkA\left(K\left(E_{s,t}\left[p^{e} N\right]\right)/K\right)$ such that $\left.\widetilde{\sigma}\right|_{K\left(\zeta_{N}\right)} = \sigma$. Let $\left(v,B\right) := \rho_{e, \cB_{N}}\left(\widetilde{\sigma}\right)$. We let~$\bbk$ be the composite field of $K$ and $\Fp^{\alg}$. By Theorem~\ref{thm:main}, we have \begin{align*}
		\left(uv^{-1}, AB^{-1}\right)
		& \in \peu \times \SL{N} \\
		\notag & = \rho_{e, \cB_{N}}\left( \fkA\left(\Kst\left(E_{s,t}\left[p^{e} N\right]/\Kst\right)\right)\right) \\
		\notag & \subseteq \rho_{e, \cB_{N}}\left( \fkA\left(K\left(s,t\right)\left(E_{s,t}\left[p^{e} N\right]/K\left(s,t\right)\right)\right)\right).
	\end{align*} Hence, there exists $\tau \in \fkA\left(K\left(s,t\right)\left(E_{s,t}\left[p^{e} N\right]/K\left(s,t\right)\right)\right)$ such that $\rho_{e, \cB_{N}}\left(\tau\right) = \left(uv^{-1}, AB^{-1}\right)$ and $\left(u,A\right) = \rho_{e, \cB_{N}} \left( \tau\widetilde{\sigma} \right) \in G_{e, \cB_{N}}$.
    
\end{proof}


\begin{thebibliography}{9}

	\bibitem{CF} \textit{Ching-Li Chai} and \textit{Gerd Faltings},
	Degeneration of abelian varieties.
	Ergeb. Math. Grenzgeb. (3), 22[Results in Mathematics and Related Areas (3)]
	Springer-Verlag, Berlin, 1990, xii+316 pp.
	
	
	\bibitem{Dummit-Foote} \textit{David S. Dummit} and \textit{Richard M. Foote},
	Abstract algebra, 3rd ed., John Wiley \& Sons, Inc., Hoboken, NJ 2004.
	
	
	
	
	
	
	
	
	
	\bibitem{IK22} \textit{Bo-Hae Im} and \textit{Hansol Kim},
	Density of elliptic curves over number fields with prescribed torsion subgroups.
	preprint 2020, \url{https://arxiv.org/abs/2209.02889v3}
	
	\bibitem{IK24} \textit{Bo-Hae Im} and \textit{Hansol Kim},
	The automorphism group of the $p^{n}$-torsion points of an~elliptic curve over a field of characteristic $p \ge 5$,  to appear in Finite Fields and Their Applications, 2025. (The final version is available upon request. Please note that it— including the labeling of Equations —differs slightly from the version on arXiv.)
	
	
	
	
	
	
	
	
	
	
	
	
	
	
	
	\bibitem{Serre72} \textit{Jean-Pierre Serre},
	Propri{\'e}t{\'e}s galoisiennes des points d'ordre fini des courbes elliptiques,
	Invent. Math. \textbf{15} (1972), no.4, 259-–331.

	
	
	
	
	\bibitem{Silverman} \textit{Joseph H. Silverman}, The arithmetic of elliptic curves, 2nd ed., Springer, Dordrecht 2009.
	
	
	
	
	
	\bibitem{Gr} \textit{Larry C. Grove}, Classical groups and geometric algebra.
	Grad. Stud. Math., \textbf{39} American Mathematical Society, Providence, RI, 2002, x+169 pp.
	ISBN: 0-8218-2019-2




\end{thebibliography}
\end{document}